\documentclass[11pt,reqno]{amsart}

\usepackage[T1]{fontenc}
\usepackage[utf8]{inputenc}
\usepackage{lmodern,microtype}
\usepackage{comment}
\usepackage{mathtools,amssymb,mathrsfs}
\usepackage[margin=1.05in]{geometry}
\usepackage{booktabs,array,enumitem}
\usepackage[hypertexnames=false,
    backref=page,
    pdftex,
    pdfpagemode=UseNone,
    breaklinks=true,
    extension=pdf,
    colorlinks=true,
    linkcolor=blue,
    citecolor=blue,
    urlcolor=blue,
]{hyperref}

\usepackage[nameinlink,noabbrev]{cleveref}
\usepackage{xcolor}

\numberwithin{equation}{section}
\allowdisplaybreaks[2]
\setlist[enumerate]{label=\textup{(\roman*)},leftmargin=*,itemsep=3pt,topsep=5pt}

\newtheorem{introthm}{Theorem}

\newtheorem{theorem}{Theorem}[section]
\newtheorem{proposition}[theorem]{Proposition}
\newtheorem{lemma}[theorem]{Lemma}
\newtheorem{corollary}[theorem]{Corollary}
\theoremstyle{definition}
\newtheorem{definition}[theorem]{Definition}

\theoremstyle{remark}
\newtheorem{remark}[theorem]{Remark}

\newcommand{\ii}{\sqrt{-1}}
\newcommand{\R}{\mathbb R}
\newcommand{\C}{\mathbb C}
\newcommand{\Z}{\mathbb Z}
\newcommand{\CP}{\mathbb{CP}}
\newcommand{\dd}{\mathrm d}
\newcommand{\dv}{\mathrm dV_g}
\newcommand{\Ch}{\mathrm{Ch}}
\newcommand{\scal}{\operatorname{scal}}
\newcommand{\Ric}{\operatorname{Ric}}
\newcommand{\Vol}{\operatorname{Vol}}
\newcommand{\tr}{\operatorname{tr}}

\newcommand{\tf}{\mathring T}
\newcommand{\ty}{t_{\mathrm Y}}
\newcommand{\Tmass}{\mathcal T}
\newcommand{\degree}{\mathcal D}
\newcommand{\energy}{\mathcal Q}
\newcommand{\A}{\mathcal A}
\newcommand{\eps}{\varepsilon}
\newcommand{\norm}[1]{\lVert #1\rVert}
\newcommand{\abs}[1]{\lvert #1\rvert}

\newcommand{\dstar}{\dd^{*}}

\title[deformed Gauduchon--Yamabe problems]
{deformed Gauduchon--Yamabe problems on Compact Hermitian Manifolds}
\author{Oluwagbenga Joshua Windare}
\address{Dipartimento di Matematica e Informatica ``Ulisse Dini'', Universit\`a degli Studi di Firenze, Viale Morgagni 67/a, 50134 Firenze, Italy}
\email{oluwagbengajoshua.windare@gmail.com}
\subjclass[2020]{53C55, 58J05, 53B35, 53A30}
\keywords{Hermitian manifold, Gauduchon connection, Chern torsion, Yamabe problem, sharp Sobolev inequality, prescribed curvature}
\thanks{The author was partially supported by GNSAGA of INdAM}
\begin{document}
\raggedbottom
\begin{abstract}
We study a two-parameter deformation of scalar curvature associated with the Gauduchon connections on compact Hermitian manifolds. A central identity expresses its dependence on the connection parameter through the squared norm of the trace-free part of the torsion of the Chern connection. We establish existence criteria for metrics of constant deformed scalar curvature and analyze the global dependence of the deformed Yamabe invariant on the connection parameter, including when the critical threshold is reached. We also study a prescribed-curvature problem in positive conformal classes. Applications include attainment of the critical threshold for an explicit Hopf example in every complex dimension at least two and on non-locally conformally Kähler Iwasawa products.
\end{abstract}
\maketitle
%\tableofcontents
%\clearpage

\section{Introduction}

The classical Yamabe problem asks whether a compact Riemannian conformal class contains a metric of constant scalar curvature.  Its resolution rests on the conformal Laplacian, the critical Sobolev exponent, the sharp Euclidean Sobolev inequality, and the concentration level represented by the round sphere.  The work of Yamabe, Trudinger, Aubin, and Schoen established the existence theory and made the Yamabe functional a basic model for nonlinear geometric analysis; see \cite{Yamabe1960,Trudinger1968,Aubin1976Yamabe,Schoen1984,Aubin1998,LeeParker1987}.

On a complex manifold equipped with a Hermitian metric, there is no single distinguished metric connection. Gauduchon introduced in \cite{Gauduchon1997} a one-parameter family of canonical Hermitian connections. Gauduchon's affine line of canonical Hermitian connections contains the Chern, the Lichnerowicz, and Bismut connections. Their scalar curvatures and torsions record different aspects of non-K\"ahler geometry. The Chern connection is natural from the holomorphic point of view. The Chern--Yamabe problem of Angella--Calamai--Spotti \cite{ACS2017} showed that even the most direct Hermitian analogue of the Yamabe problem differs sharply from the Riemannian one; the sign of the Chern--Gauduchon degree governs the existence theory. Existence was established in the non-positive Chern--Gauduchon degree
case, while the positive degree case was shown to be considerably more delicate.  Related developments include \cite{LejmiMaalaoui2018,CalamaiZou2020,Ho2021,HoShin2021,Fusi2022,AP2022,Yu2025}.  For the wider Gauduchon family, see Gauduchon's foundational work \cite{Gauduchon1977,Gauduchon1984,Gauduchon1997}, Barbaro's Bismut--Yamabe theory \cite{Barbaro2023}, and the prescribed Gauduchon scalar-curvature results of Li--Zhou--Zhou \cite{LiZhouZhou2024}.

A different route to a Yamabe-type Hermitian equation was developed in \cite{APSSW2026}.  There, the Chern scalar curvature is modified by carefully chosen Lee-form and torsion terms so that its conformal transformation has the same critical exponent as the ordinary Yamabe problem. It can therefore be studied by the variational method originating in the work of Aubin \cite{Aubin1998, Aubin1976Yamabe} and developed further by Hebey--Vaugon \cite{HebeyVaugon2001} and others.

Two questions then arise. How does the deformation in \cite{APSSW2026} depend on the choice of canonical Hermitian connection? And what happens at the critical threshold, where strict-inequality existence arguments cease to apply? The first question is geometric: The canonical family allows us to ask which features of the construction depend on the choice of connection, and which part of Hermitian torsion is detected by changing the connection. The second is variational: it asks when equality is unattained, when it is attained, and which geometric parameters separate these possibilities. The equality case was left open beyond $\CP^1$ in \cite[Remark 1.2]{APSSW2026}.

We address these questions together. Let $(M^{2n},J,\omega)$ be a compact connected complex manifold with $n\geq2$. The Gauduchon connection is denoted by $\nabla^r$, $r\in\R$, with $r=1,0,-1$ corresponding respectively to the Chern, Lichnerowicz, and Bismut connections. Set
\[
 A_r=1+(n-1)r,\qquad K_r=3r^2-2r+1.
\]
If $\theta$ is the Lee form, $T^r$ the torsion, and $\scal^r$ the scalar curvature of this connection, we consider
\begin{equation}\label{def}
 \mu^{r,t}=\scal^r+\frac{t-A_r}{n-1}\dstar\theta
             +\frac{t-2A_r}{2K_r}\abs{T^r}^2.
\end{equation}
The scalar curvature convention is twice the complex trace, so that the Chern scalar curvature agrees with the Riemannian scalar curvature on a K\"ahler manifold. We use $t>0$ in all variational assertions. Although \eqref{def} depends on both \(r\) and \(t\), the \(r\)-dependence simplifies to
\begin{equation}\label{eq:intro-reduction}
 \mu^{r,t}=\mu^{1,t}-\frac{n-1}{2}(r-1)\abs{\tf}^2,
\end{equation}
where $\tf$ is the trace-free Chern torsion and $\mu^{1,t}$ is the Chern slice studied in \cite{APSSW2026}. $\tf$ is conformally invariant as a tensor and $\abs{\tf}^2$ has conformal weight $-1$. In complex dimension at least three, its vanishing is equivalent to local conformal K\"ahlerness. In complex dimension two, it vanishes identically for algebraic reasons; this does not assert that every Hermitian surface is locally conformally K\"ahler (LCK).

Let $[\omega]$ denote the conformal class of $\omega$ and set
\[
 N:=\frac{2n}{n-1},
 \qquad
 \Lambda_n:=2n\,\operatorname{Vol}(S^{2n})^{1/n},
\]
The $(r,t)$ deformed Yamabe invariant is defined as
\begin{equation}\label{eq:intro-lambda}
 \lambda_{r,t}([\omega])
 :=\inf_{u\in H^1(M)\setminus\{0\}}
 \frac{\displaystyle\int_M\left(\frac{2t}{n-1}|\nabla u|^2+\mu^{r,t}u^2\right)\frac{\omega^n}{n!}}
 {\displaystyle\left(\int_M|u|^N\frac{\omega^n}{n!}\right)^{2/N}}.
\end{equation}
This is a conformal invariant. Its universal upper bound is $t\Lambda_n$, and strict inequality yields a smooth positive minimizer and therefore a conformal metric of constant $\mu^{r,t}$-curvature (Theorem~ \ref{thm:sharp-threshold}). The first observation is that if $n\ge3$ and $\omega$ is locally conformally K\"ahler, then
\[
 \mu^{r,t}=\mu^{1,t},
 \qquad
 \lambda_{r,t}([\omega])=\lambda_{1,t}([\omega])
\]
for every $r\in\mathbb R$ and every $t>0$.

 Let $\eta$ be the unit-volume Gauduchon representative of $[\omega]$, and define the deformed Gauduchon degree
\[
 \mathcal D_{r,t}([\omega])
 :=\int_M\mu^{r,t}(\eta)\,\mathrm dV_\eta.
\] The main results can then be organized as follows.

\begin{introthm}\label{thm:intro-existence}

\begin{enumerate}
    \item (Proposition \ref{prop:degree-criterion})
If $\mathcal D_{r,t}([\omega])<t\Lambda_n,$ then $[\omega]$ contains a metric of constant $\mu^{r,t}$-curvature. In particular, if $\mathcal D_{r,t}\le 0$, the constant $\mu^{r,t}$-curvature representative is unique up to scale. 

\item (Theorem \ref{thm:large-r} and Corollary \ref{cor:special-Hermitian}) If $n\geq 3,$ and $\omega$ is not LCK, then the constant $\mu^{r,t}$ problem is solvable for all large $r$. This applies, in particular, to balanced non-K\"ahler, first-Gauduchon non-K\"ahler, and SKT non-K\"ahler metrics.
\end{enumerate}
\end{introthm}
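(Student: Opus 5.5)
For part (i) I will test the Yamabe-type quotient \eqref{eq:intro-lambda} against a constant function in the Gauduchon gauge. Because $\lambda_{r,t}$ is a conformal invariant, I may compute it with respect to the unit-volume Gauduchon representative $\eta$, and then take $u\equiv1$. The gradient term drops out and the denominator equals $1$, so
\[
\lambda_{r,t}([\omega])\le\int_M\mu^{r,t}(\eta)\,\mathrm dV_\eta=\mathcal D_{r,t}([\omega]).
\]
Hence $\mathcal D_{r,t}<t\Lambda_n$ gives the strict inequality $\lambda_{r,t}<t\Lambda_n$. Theorem~\ref{thm:sharp-threshold} then supplies a smooth positive minimizer, and therefore a conformal metric of constant $\mu^{r,t}$-curvature.

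For uniqueness when $\mathcal D_{r,t}\le0$, I will first show that every constant-curvature representative $g=v^{2/(n-1)}\eta$ has constant $c$ with the sign of $\mathcal D_{r,t}$ (nonpositive). To do this I integrate the transformed equation
\[
\tfrac{2t}{n-1}\Delta_\eta v+\mu^{r,t}(\eta)v=c\,v^{N-1}
\]
against suitable powers of $v$, using that in the Gauduchon gauge the operator's $\dstar\theta$-contributions integrate away. For two solutions $v_1,v_2$ with $c_i\le0$, the standard maximum-principle comparison applies. At a maximum point of $w=v_1/v_2$, after normalizing scale, the elliptic equation for $w$ forces $w$ to be constant; when $c_i<0$ this follows because the right side is monotone. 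The case $c=0$ reduces to the kernel of a linear operator with positive solution, which is one-dimensional, giving uniqueness up to scale.

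For part (ii) I use \eqref{eq:intro-reduction}: $\mu^{r,t}=\mu^{1,t}-\frac{n-1}{2}(r-1)\abs{\tf}^2$. Since $\abs{\tf}^2$ has conformal weight $-1$, its integral over $\eta$ is well defined on the conformal class. Integrating gives
\[
\mathcal D_{r,t}=\mathcal D_{1,t}-\tfrac{n-1}{2}(r-1)\int_M\abs{\tf}^2_\eta\,\mathrm dV_\eta .
\]
When $n\ge3$ and $\omega$ is not LCK, $\tf\not\equiv0$, so this integral is strictly positive. Therefore $\mathcal D_{r,t}\to-\infty$ as $r\to\infty$, and part (i) applies for all large $r$, with uniqueness once $\mathcal D_{r,t}\le0$.

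For Corollary~\ref{cor:special-Hermitian}, I must check that balanced, first-Gauduchon, and SKT non-Kähler metrics with $n\ge3$ are never LCK. The argument is that LCK together with the respective condition ($\dd\omega^{n-1}=0$, $\partial\bar\partial\omega^{n-1}$-type, or $\partial\bar\partial\omega=0$) forces the Lee form to vanish or be exact. An exact Lee form makes $\omega$ conformally Kähler, which the standard Lee-form identities rule out on these classes unless $\omega$ is Kähler. The main obstacle is the uniqueness step, specifically controlling the sign of $c$ relative to $\mathcal D_{r,t}$ through the non-self-adjoint first-order terms in Gauduchon gauge.
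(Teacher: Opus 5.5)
Your argument for the degree criterion and the large-$r$ statement matches the paper's. Your uniqueness step also works, once the ``suitable power'' is taken to be $v^{-1}$. Dividing the equation by $v$ and integrating over $(M,\eta)$ gives
\[
c\int_M v^{N-2}\,\mathrm dV_\eta=\mathcal D_{r,t}([\omega])-\frac{2t}{n-1}\int_M\abs{\dd\log v}^2\,\mathrm dV_\eta\le\mathcal D_{r,t}([\omega]),
\]
so $c\le0$. This is only an inequality, not a sign-matching rule, but it suffices. The ``main obstacle'' you name does not exist. The operator is $\frac{2t}{n-1}\Delta+\mu^{r,t}$, with $\Delta=\dstar\dd$ the Riemannian Laplacian: it is self-adjoint and has no first-order terms, and $\dstar\theta_\eta=0$ pointwise. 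A smaller point: $\int_M\abs{\tf}^2_\eta\,\mathrm dV_\eta$ is not a conformal invariant. It is well defined because $\eta$ is canonical, and it is positive because the tensor $\tf$ is conformally invariant, so $\tf_\eta\not\equiv0$.

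The genuine gap is in the final clause, for first-Gauduchon and SKT metrics. You assert that LCK plus the relevant condition forces $\theta$ to vanish or be exact, but you give no mechanism. You also describe first-Gauduchon as a condition of ``$\partial\bar\partial\omega^{n-1}$-type''. Read literally, that is the Gauduchon condition, and then the claim is false. By Gauduchon's theorem, the conformal class of the LCK Hopf metric $\omega_a$ of Section~\ref{sec:examples} contains a Gauduchon metric. That metric is LCK and non-K\"ahler, since Hopf manifolds carry no K\"ahler metric.

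The first-Gauduchon condition is $\partial\bar\partial\omega\wedge\omega^{n-2}=0$, and the missing input is a pointwise identity. For such metrics $\dstar\theta=\frac12\abs T^2-\abs\theta^2$ \cite{FinoUgarte2013}. LCK gives $\tf=0$, hence $\abs T^2=\frac2{n-1}\abs\theta^2$ by \eqref{eq:orthogonal}. Combining these, $\dstar\theta=-\frac{n-2}{n-1}\abs\theta^2$. Integrating over $M$ gives $\theta\equiv0$ for $n\ge3$, and then $\dd\omega=0$ by \eqref{eq:domega-LCK}. SKT metrics are first-Gauduchon, so they are covered too. Equivalently, you could compute $\partial\bar\partial\omega$ from $\partial\omega=\frac1{n-1}\theta^{1,0}\wedge\omega$ and take a trace.

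Your ``exact Lee form'' branch is a red herring: the correct argument shows $\theta$ vanishes outright, and nothing in your sketch produces even exactness. Your balanced case is fine.
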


%The degree criterion, the large-$r$ consequence, and the local Aubin criterion are proved respectively in Proposition~3.6, Theorem~3.8 with Corollary~3.9, and Theorem~3.10 with Corollary~3.12.

The LCK case, for which the \(r\)-dependence vanishes, and the Kähler case are treated separately in Propositions \ref{prop:LCK-constant} -- \ref{thm:Kahler-constant} and Corollary \ref{cor:strict-LCK}.

We analyze the global dependence of the deformed Yamabe invariant on $r.$ For this, set
\[
 \mathcal T([\omega])
 :=\left(\int_M|\mathring T|^{2n}\,\mathrm dV_g\right)^{1/n},
\] which is conformally invariant.
\begin{introthm}\label{thm:intro-global-r} For every fixed $t>0$, the following hold.
\begin{enumerate}
    \item (Theorem~\ref{thm:r-global}) The map
\[
 r\longmapsto\lambda_{r,t}([\omega])
\]
is finite, nonincreasing, concave, and globally Lipschitz, with sharp Lipschitz constant
\[
 \frac{n-1}{2}\,\mathcal T([\omega]).
\]
Its asymptotic slopes are
\[
 \lim_{r\to+\infty}\frac{\lambda_{r,t}([\omega])}{r}
 =-\frac{n-1}{2}\,\mathcal T([\omega]),
 \qquad
 \lim_{r\to-\infty}\frac{\lambda_{r,t}([\omega])}{r}=0.
\]

\item (Corollary~\ref{cor:lck-slope}) Consequently, for $n\ge3$,
\[
 \omega\text{ is LCK}
 \quad\Longleftrightarrow\quad
 \lim_{r\to+\infty}\frac{\lambda_{r,t}([\omega])}{r}=0.
\]
Thus local conformal K\"ahlerness is detected variationally by the asymptotic behavior of the deformed Yamabe invariant.
\end{enumerate}
\end{introthm}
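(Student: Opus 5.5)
The plan is to make the reduction \eqref{eq:intro-reduction} do all the work. It shows that, for fixed $t$ and fixed $u$, the Rayleigh quotient in \eqref{eq:intro-lambda} is an affine function of $r$:
\[
 Q_{r}(u)=Q_{1}(u)-\frac{n-1}{2}(r-1)\,\frac{\int_M\abs{\tf}^2u^2\,\dv}{\norm{u}_{N}^2}.
\]
The slope is nonpositive. Hence $\lambda_{r,t}=\inf_u Q_r(u)$ is an infimum of affine nonincreasing functions of $r$, so it is concave and nonincreasing as soon as it is finite.

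Finiteness will come from two bounds. From above, $\lambda_{r,t}\le t\Lambda_n$, the universal bound from Theorem~\ref{thm:sharp-threshold}. From below, $\mu^{r,t}$ is a smooth function and $2t/(n-1)>0$, so $Q_r(u)\ge -\sup\abs{\mu^{r,t}}^{-}\,\Vol^{2/n}$ by Hölder.

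For the Lipschitz bound, first note that the weighted quantity $\int\abs{\tf}^2u^2\,\dv/\norm u_N^2$ is conformally natural, because $\abs{\tf}^2$ has weight $-1$. Hölder with exponents $n$ and $N/2=n/(n-1)$ gives
\[
 \int\abs{\tf}^2u^2\le \Big(\int\abs{\tf}^{2n}\Big)^{1/n}\norm u_N^2=\mathcal T\,\norm u_N^2 .
\]
So every affine function $Q_r(u)$ has slope in $[-\tfrac{n-1}{2}\mathcal T,0]$, and the infimum is Lipschitz with that constant.

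The asymptotic slopes are where the real work lies. I will use the concavity: $\lambda_{r,t}/r$ converges as $r\to\pm\infty$ to the extremal one-sided slopes. As $r\to-\infty$ the function is nonincreasing and bounded above by $t\Lambda_n$, so $\lambda_{r,t}$ converges to a finite limit or increases sublinearly, and the slope limit is $0$.

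As $r\to+\infty$ I will show $\lambda_{r,t}/r\to-\frac{n-1}{2}\mathcal T$. This needs test functions nearly saturating Hölder: take $u_\epsilon=(\abs{\tf}^2+\epsilon)^{(n-1)/2}$, smoothed, so that $u^{N-2}\propto\abs{\tf}^{2}$. Then $\int\abs{\tf}^2u^2/\norm u_N^2\to\mathcal T$. For each fixed $\epsilon$ the Dirichlet and $\mu^{1,t}$ terms are a constant independent of $r$, so
\[
 \liminf_{r\to\infty}\frac{\lambda_{r,t}}{r}\ge-\tfrac{n-1}{2}\mathcal T
\]
follows from the Lipschitz bound, and
\[
 \limsup_{r\to\infty}\frac{\lambda_{r,t}}{r}\le -\tfrac{n-1}{2}\frac{\int\abs{\tf}^2u_\epsilon^2}{\norm{u_\epsilon}_N^2}.
\]
Letting $\epsilon\to0$ gives the limit. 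Sharpness of the Lipschitz constant follows, since a concave function with asymptotic slope $-\frac{n-1}{2}\mathcal T$ cannot have a smaller Lipschitz constant. The main technical obstacle is regularity: $u_\epsilon$ must lie in $H^1$ and be smooth, which is why I use $\abs{\tf}^2+\epsilon$ with $\epsilon>0$, or mollify. One should also check that the whole argument is independent of the representative metric; conformal invariance of $\lambda_{r,t}$ and of $\mathcal T$ handles this.

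Part (ii) then follows directly. The limit vanishes iff $\mathcal T=0$, iff $\tf\equiv0$, and for $n\ge3$ this is equivalent to $\omega$ being LCK, as recalled after \eqref{eq:intro-reduction}.
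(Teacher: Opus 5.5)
Your proposal is correct and follows essentially the same route as the paper's proofs of Theorem~\ref{thm:r-global} and Corollary~\ref{cor:lck-slope}. The shared ingredients are the affine dependence of the quotient on $r$ from \eqref{eq:exact-reduction}, the sharp H\"older bound of Lemma~\ref{lem:holder-sharp} for both the Lipschitz estimate and the slope at $+\infty$, and boundedness for $r\le 1$ for the slope at $-\infty$; your explicit smooth test functions $(W+\epsilon)^{(n-1)/2}$ in place of the paper's $L^N$-approximation of $W^{(n-1)/2}$ are an inessential variation. One small slip: H\"older gives $\int_M u^2\,\dv\le \Vol(M,g)^{1/n}\norm{u}_{L^N}^2$, so your finiteness bound should carry $\Vol^{1/n}$ rather than $\Vol^{2/n}$, which does not affect the conclusion.
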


The next results concern the critical threshold.
\begin{introthm}\label{thm:intro-phase}
Assume $\mathring T\not\equiv0$ and that $\lambda_{r,t}([\omega]) = t\Lambda_n$ for at least one $r\in \R$. 

\begin{enumerate}
    \item (Theorem~ \ref{thm:saturation-halfline} and Theorem~\ref{thm:endpoint-attainment}) Then there is a unique finite number $r^{\sharp}=r^{\sharp}(t,[\omega])$ such that
\[
 \{r:\lambda_{r,t}([\omega])=t\Lambda_n\}
 =(-\infty,r^{\sharp}].
\]
The endpoint $r^{\sharp}$ has an exact variational characterization, and the critical infimum is not attained for any $r<r^{\sharp}$. At the endpoint $r^{\sharp}$, if $$(2n-1-t)\mu^{1,0}
-\frac{(2n-1)(n-1)}{2}(r^{\sharp}-1)|\tf|^2 > 0$$ pointwise, then the critical level is attained by a smooth positive minimizer.

\item (Theorem \ref{thm:phase}) There is, moreover, a second canonical connection parameter $r_0>r^{\sharp}$ such that
\[
 \lambda_{r,t}([\omega])=
 \begin{cases}
 t\Lambda_n, & r\le r^{\sharp},\\
 \text{a number in }(0,t\Lambda_n), & r^{\sharp}<r<r_0,\\
 0, & r=r_0,\\
 <0, & r>r_0.
 \end{cases}
\]
The invariant is strictly decreasing on $(r^{\sharp},\infty)$.
\end{enumerate}
\end{introthm}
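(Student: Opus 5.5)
The plan is to work in the fixed metric $g$ of $[\omega]$, normalized if convenient, and use the reduction \eqref{eq:intro-reduction}. With it the numerator of \eqref{eq:intro-lambda} becomes
\[
 E_{r}(u)=E_{1}(u)-\frac{n-1}{2}(r-1)\int_M\abs{\tf}^2u^2\,\dv .
\]
Thus $\lambda_{r,t}=\inf_u$ of a family of functions that are affine in $r$ with slope $-\frac{n-1}{2}\int\abs{\tf}^2u^2/\norm{u}_N^2\le0$. It follows that $r\mapsto\lambda_{r,t}$ is concave and nonincreasing. It is also bounded above by $t\Lambda_n$, which I take from Theorem~\ref{thm:sharp-threshold} and Theorem~\ref{thm:r-global}.

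For part (i), take the saturation set $S=\{r:\lambda_{r,t}=t\Lambda_n\}$, nonempty by hypothesis. Monotonicity together with the universal upper bound makes $S$ a down-set. Continuity (Lipschitz) makes it closed. Since $\tf\not\equiv0$, Theorem~\ref{thm:r-global} gives slope $\to-\frac{n-1}{2}\Tmass<0$ as $r\to+\infty$, so $S$ is bounded above. Hence $S=(-\infty,r^\sharp]$.

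The variational characterization I aim for is
\[
 r^\sharp=1+\inf_{u}\frac{E_1(u)-t\Lambda_n\norm{u}_N^2}{\frac{n-1}{2}\int\abs{\tf}^2u^2}.
\]
This holds because $r\in S$ exactly when $E_1(u)-\frac{n-1}{2}(r-1)\int\abs{\tf}^2u^2\ge t\Lambda_n\norm u_N^2$ for all $u$. Functions with $\int\abs{\tf}^2u^2=0$ satisfy that inequality for every $r\in S$, so they impose no constraint.

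Non-attainment for $r<r^\sharp$ goes as follows. Suppose a minimizer $u\ne0$ attained $t\Lambda_n$ at some $r<r^\sharp$. By unique continuation for the Euler–Lagrange equation $u>0$, so $\int\abs{\tf}^2u^2>0$. Then $E_{r^\sharp}(u)<E_r(u)=t\Lambda_n\norm u_N^2$, which contradicts $r^\sharp\in S$.

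Attainment at $r^\sharp$ is the main obstacle. I would take a minimizing sequence for the quotient defining $r^\sharp$, or equivalently minimizers $u_k$ of subcritical or slightly supercritical-parameter problems $r_k\downarrow r^\sharp$. For $r_k>r^\sharp$, $\lambda<t\Lambda_n$, so Theorem~\ref{thm:sharp-threshold} gives smooth positive minimizers $u_k$ with $\norm{u_k}_N=1$. I would then show these cannot concentrate. If they blew up at a point $p$, a Pohozaev/second-order Aubin-type expansion with bubbles would produce a correction term whose sign is governed by the zeroth-order potential at $p$. In the required dimension count this is the combination $(2n-1-t)\mu^{1,0}-\frac{(2n-1)(n-1)}{2}(r^\sharp-1)\abs{\tf}^2$, presumably arising from rewriting $\mu^{r,t}$ via $\mu^{1,t}=\mu^{1,0}+\dots$ together with the conformal-Laplacian comparison at exponent $N$. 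Positivity of this combination would force the energy strictly below $t\Lambda_n$ near the concentration point, contradicting $\lambda_{r_k,t}\to t\Lambda_n$. Hence $u_k\to u$ strongly in $H^1$, and elliptic regularity plus the maximum principle give a smooth positive minimizer at level $t\Lambda_n$. I expect the delicate point to be matching the precise coefficients in this local expansion; I would first test it with $u_k$ replaced by cut-off bubbles in normal coordinates to identify the constant.

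For part (ii), concavity and nonincreasing behaviour together with the slope $-\frac{n-1}{2}\Tmass<0$ at $+\infty$ force $\lambda\to-\infty$. A concave function that is constant on $(-\infty,r^\sharp]$ and nonconstant afterwards is strictly decreasing on $(r^\sharp,\infty)$: any flat piece beyond $r^\sharp$ would, by concavity, extend backward. Continuity then gives a unique zero $r_0>r^\sharp$. The sign structure follows: values lie in $(0,t\Lambda_n)$ on $(r^\sharp,r_0)$, and are negative for $r>r_0$. Canonicity of $r_0$ follows since it is defined by conformal invariants.
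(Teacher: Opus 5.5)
Your treatment of the saturation set, the variational formula for $r^\sharp$, and interior non-attainment agrees with the paper. Using base point $1$ instead of a saturated $r_*$ is harmless by the affine shift. Your remark that functions with $\A(u)=0$ impose only an $r$-independent condition, already satisfied because $\mathscr S_t([\omega])\neq\varnothing$, is correct. One small correction: unique continuation does not give $u>0$. Replace $u$ by $|u|$ and use the strong maximum principle, or use unique continuation only to get $\int_M Wu^2\,dV_g>0$, which is all you need. Part (ii) is correct by a genuinely different and more elementary route: you get strict decrease on $(r^\sharp,\infty)$ from concavity alone, and define $r_0$ as the unique zero of a continuous, strictly decreasing function tending to $-\infty$. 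The paper instead builds $r_0=s+\gamma_{s,t}/c_n$ from the weighted eigenvalue problem \eqref{eq:weighted-eigenfunction}, and proves strict decrease by testing with minimizers for $r>r_\sharp$. The paper's route gives an explicit, attained variational formula for $r_0$. Yours avoids all attainment questions, and your Lipschitz bound even recovers the same gap $r_0-r^\sharp\ge t\Lambda_n/(c_n\Tmass([\omega]))$.

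The genuine gap is endpoint attainment. As written, the contradiction does not exist. For $r_k>r^\sharp$ the minimizers already satisfy $\energy_{r_k,t}(u_k)=\lambda_{r_k,t}<t\Lambda_n$, so producing energy ``strictly below $t\Lambda_n$'' contradicts nothing. The sign is also reversed: $E_{r^\sharp,t}(p)>0$ means $\frac{2n-1}{t}\mu^{r^\sharp,t}(p)>\scal(g)(p)$, which makes the second-order Aubin correction of a bubble at $p$ \emph{positive}. It is $E<0$ that lowers the energy; that is Theorem~\ref{thm:pointwise-Aubin}, and it is why $E_{r^\sharp,t}\ge0$ at the endpoint. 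More seriously, even with the sign fixed, a cut-off-bubble expansion only gives upper bounds for the infimum. It says nothing about the energy of an arbitrary concentrating sequence of actual minimizers. To exclude blow-up you need the Hebey--Vaugon/Druet blow-up analysis underlying Collion's theorem, in three steps. First, show a concentrating $u_k$ has weak limit $0$ and a single bubble. Second, prove sharp pointwise bounds $u_k\le C\mu_k^{(m-2)/2}(\mu_k^2+d(x,x_k)^2)^{-(m-2)/2}$. Third, apply a Pohozaev identity on small balls to get $\frac{4(m-1)}{m-2}h_\sharp(x_0)\le\scal(g)(x_0)$, i.e.\ $E_{r^\sharp,t}(x_0)\le0$ at the concentration point, which contradicts the hypothesis. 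The paper does not redo this; it invokes Theorem~\ref{thm:collion-input}(ii) after checking its hypotheses:
\begin{itemize}
\item $h_\sharp=\mu^{r_\sharp,t}/a_t$ is weakly critical, and $\Delta+h_\sharp$ is coercive by Lemma~\ref{lem:coercivity}.
\item $h_\eps=h_\sharp-c_n\eps W/a_t$ satisfies $h_\eps\le h_\sharp$, $h_\eps\not\equiv h_\sharp$, and $h_\eps\to h_\sharp$ smoothly.
\item Each $h_\eps$ is subcritical by the definition of $r_\sharp$, and coercive for small $\eps$ by Lipschitz continuity.
\item $E_{r_\sharp,t}>0$ is exactly Collion's strict barrier.
\item The restriction $m=2n>4$ is automatic, since $\tf\not\equiv0$ forces $n\ge3$.
\end{itemize}
You must either cite this result with these hypotheses verified or carry out the full blow-up argument; the heuristic you sketch does not substitute for it.
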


These results separate three phenomena that need not coincide: equality with the spherical level, attainment of that level, and positivity of the quadratic form. Let $B_0(g)$ be the optimal second Sobolev constant in \eqref{eq:sharp-compact}.
\begin{introthm}(Theorem \ref{thm:chern-threshold}).\label{thm:intro-tstar}
Suppose that on a fixed Hermitian background the Chern slice has constant potential
\[
 \mu^{1,t}=A+bt,
 \qquad A>0,\quad b\ge0.
\]
Then the critical value
\[
 t_*:=\frac{A}{\Lambda_n B_0(g)-b}
\]
belongs to $(0,2n-1)$ and satisfies
\[
 \lambda_{1,t}=t\Lambda_n\quad(0<t\le t_*),
 \qquad
 \lambda_{1,t}<t\Lambda_n\quad(t>t_*).
\]
The spherical level is not attained for $t<t_*$ and is attained at $t=t_*$. At the endpoint the minimizers are exactly the extremals for the optimal second Sobolev inequality. The map $t\mapsto\lambda_{1,t}$ is positive, nondecreasing, concave, and continuous, while $\lambda_{1,t}/t$ is constant on $(0,t_*]$ and strictly decreasing after the threshold.
\end{introthm}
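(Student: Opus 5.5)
With $\mu^{1,t}=A+bt$ constant on the fixed background $g$, the invariant becomes
\[
 \lambda_{1,t}=\inf_{u\neq0}\frac{\frac{2t}{n-1}\int|\nabla u|^2+(A+bt)\int u^2}{\norm{u}_N^2}.
\]
The plan is to divide by $t$ and compare with the sharp Sobolev inequality
\[
 \norm{u}_N^2\le K\int|\nabla u|^2+B_0(g)\int u^2 ,
\]
where $K$ is the sharp Euclidean constant. In the normalization of the paper, $\frac{2}{n-1}K^{-1}=\Lambda_n$, since the conformal Laplacian coefficient is $\frac{4(m-1)}{m-2}=\frac{2(2n-1)}{n-1}$ and $\lambda\le t\Lambda_n$ with our weight. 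Multiplying the sharp inequality by $t\Lambda_n$ gives
\[
 t\Lambda_n\norm{u}_N^2\le \tfrac{2t}{n-1}\int|\nabla u|^2+t\Lambda_nB_0\int u^2 .
\]
Hence if $A+bt\ge t\Lambda_nB_0$, i.e.\ $t(\Lambda_nB_0-b)\le A$, the Rayleigh quotient is $\ge t\Lambda_n$. Combined with the universal upper bound $\lambda_{1,t}\le t\Lambda_n$ from Theorem~\ref{thm:sharp-threshold}, this gives equality for $t\le t_*$.

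The first steps are the bounds on $t_*$. To see $\Lambda_nB_0-b>0$, test with $u\equiv1$: this gives $\lambda_{1,t}\le(A+bt)\Vol^{1-2/N}$. Using $B_0(g)\ge\Vol^{-2/N}$ (constants in the sharp inequality), the bound $t_*>0$ follows once $\Lambda_nB_0>b$. For this, note that if $b\ge\Lambda_nB_0$ then $u\equiv1$ gives $\lambda_{1,t}\le (A+bt)\Vol^{2/n}$, which cannot conflict with anything, so positivity must come from elsewhere. I expect $b<\Lambda_nB_0$ to follow from the Chern-slice identity, via the $t$-coefficient of $\mu^{1,t}$ being $\frac{1}{n-1}\dstar\theta+\frac1{2K_1}|T|^2$ with zero mean after integration on the Gauduchon gauge. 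The bound $t_*<2n-1$ I would obtain from Aubin's local test-function criterion (the local Aubin theorem): at $t=2n-1$ the problem is the classical Riemannian Yamabe problem shifted by the potential, and strict inequality occurs there unless the manifold is conformally round. I would check this against the stated hypotheses, treating it as the delicate point.

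For $t>t_*$, I will show strict inequality by using an extremal of the sharp inequality. At $t_*$, the Hebey–Vaugon result gives existence of extremal functions $u_0$ for the optimal second constant $B_0$, since $B_0$ is attained. Plugging $u_0$ into the quotient at $t>t_*$ yields
\[
 t\Lambda_n\norm{u_0}_N^2+(A+bt-t\Lambda_nB_0)\int u_0^2 ,
\]
and the coefficient is negative for $t>t_*$, giving $\lambda_{1,t}<t\Lambda_n$. The same computation at $t=t_*$ shows $u_0$ attains $t_*\Lambda_n$, and any minimizer must give equality in the sharp inequality, hence is an extremal.

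For non-attainment at $t<t_*$: a minimizer $u$ would satisfy $t\Lambda_n\norm u_N^2=\frac{2t}{n-1}\int|\nabla u|^2+(A+bt)\int u^2$. But the right side is $\ge t\Lambda_n\norm u_N^2+(A+bt-t\Lambda_nB_0)\int u^2$, which is strictly larger because the coefficient is positive, a contradiction.

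The monotonicity and shape properties come next. The map $t\mapsto\lambda_{1,t}$ is an infimum of affine functions of $t$, hence concave. It is positive, since $A>0$ and all terms are nonnegative, and therefore continuous and nondecreasing, since each affine function has nonnegative slope. The ratio $\lambda_{1,t}/t=\inf\bigl(\frac{2}{n-1}\int|\nabla u|^2+(A/t+b)\int u^2\bigr)/\norm u_N^2$ is nonincreasing in $t$. It is strictly decreasing after $t_*$: for $t_*<t_1<t_2$ a minimizer exists at $t_1$ by Theorem~\ref{thm:sharp-threshold}, and using it at $t_2$ strictly lowers the value since $A/t_2<A/t_1$.

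The main obstacle is the precise bound $t_*<2n-1$ together with the identification of the constant normalizations. That step is where I would spend the effort.
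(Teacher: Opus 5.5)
Your saturation argument for $t\le t_*$, the non-attainment argument for $t<t_*$, and the concavity and monotonicity arguments all match the paper's. However, there is a genuine gap in the step you flagged, and it carries over to the attainment claim.

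\emph{First gap: $\Lambda_nB_0(g)>b$.} You have no valid argument for this. The $t$-coefficient of $\mu^{1,t}$ is $b=\frac1{n-1}\dstar\theta+\frac14\abs T^2$. Integrating it (the divergence term has zero mean for every metric) only gives $b\,\Vol(M,g)=\frac14\int_M\abs T^2\,dV_g\ge0$. That says nothing about $b$ compared with $\Lambda_nB_0(g)$.

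\emph{Second gap: extremals at $t_*$.} Your attainment at $t_*$ relies on Hebey--Vaugon supplying an extremal ``since $B_0$ is attained''. This conflates two different facts. Hebey--Vaugon only says that \eqref{eq:sharp-compact} holds with the constant $B_0(g)$. The existence of a nonzero $u$ realizing equality is a separate issue. The paper obtains it from the Djadli--Druet criterion \eqref{eq:DD}, which needs a strict inequality you have not proved. For $t>t_*$ you do not need extremals at all: optimality of $B_0(g)$ already gives a $u$ with $\energy_{1,t}(u)<t\Lambda_nD(u)$, exactly as in Theorem~\ref{thm:constant-r}.

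\emph{The missing idea.} At $t=\ty=2n-1$ the potential is not ``shifted''. By \eqref{eq:special-t}, $\mu^{1,\ty}=\scal(g)$, so $\lambda_{1,\ty}$ is exactly the normalized Yamabe invariant. Aubin's local test cannot give strictness here, because $E_{1,\ty}=(2n-1-\ty)\mu^{1,0}\equiv0$. You need two further ingredients:
\begin{enumerate}
\item the strict Yamabe theorem, $\lambda<\ty\Lambda_n$ unless the manifold is conformally the round sphere;
\item the fact that a compact Hermitian manifold of complex dimension $n\ge2$ is not conformal to the round sphere. The paper uses that $S^6$ is the only even sphere of dimension $\ge4$ carrying an almost complex structure, and the round $S^6$ has no orthogonal integrable one.
\end{enumerate}
Together these give $\lambda_{1,\ty}<\ty\Lambda_n$. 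Your own saturation computation then forces
\[
\Lambda_nB_0(g)>\frac{A}{\ty}+b=\frac{\scal(g)}{2n-1}.
\]
Otherwise $A+b\ty\ge\ty\Lambda_nB_0(g)$ would give $\lambda_{1,\ty}=\ty\Lambda_n$.

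This single inequality closes everything. Since $A>0$, it yields $\Lambda_nB_0(g)>b$, so $t_*$ is positive and finite, and it yields $t_*<\ty$. Since $K_{2n}^2=2/((n-1)\Lambda_n)$, it is also precisely the Djadli--Druet hypothesis \eqref{eq:DD}, which supplies the extremal you need for attainment at $t_*$.

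Two minor points:
\begin{itemize}
\item Testing constants gives $B_0(g)\ge\Vol(M,g)^{-1/n}$, not $\Vol^{-2/N}$.
\item Positivity of each quotient does not by itself give positivity of the infimum. It does follow from monotonicity together with $\lambda_{1,t}=t\Lambda_n>0$ for $t\le t_*$.
\end{itemize}
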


In Section \ref{sec:examples}, these critical thresholds are characterized in terms of $B_0(g)$ and applied to $\mathbb{CP}^n$ with the Fubini--Study metric, Hopf manifolds, and the standard Iwasawa manifold and its products. The Iwasawa and Hopf examples show that critical equality and attainment also occur in non-K\"ahler settings.

Section \ref{sec:prescribed} studies the prescribed deformed $\mu^{r,t}$ curvature problem in the positive case.

\begin{introthm} (Theorem \ref{thm:weighted-threshold} and Corollaries \ref{cor:weighted-tests} and \ref{cor:prescribed-window})\label{thm:intro-prescribed}
Assume $\lambda_{r,t}([\omega])>0$, and let $f\in C^\infty(M)$ satisfy $\max_M f>0$. Consider the weighted critical quotient $\lambda_f^{r,t}$ associated with
\[
 \frac{2t}{n-1}\Delta_\omega u+\mu^{r,t}(\omega)u
 =f u^{N-1}.
\]
If 
\[ \lambda_f^{r,t} <
 \frac{t\Lambda_n}{(\max_M f)^{2/N}},
\]
then $f$ is realized as the $\mu^{r,t}$-curvature of a conformal metric. In particular, when $n\ge3$, a sufficient local condition at a positive maximum point $P$ of $f$ is
\[
 (2n-1)\mu^{r,t}(P)-t\,\operatorname{scal}(g)(P)
 +t(n-2)\frac{\Delta f(P)}{f(P)}<0.
\]
Combining this criterion with the exact positivity endpoint in the $r$-parameter yields sufficient parameter intervals of connections on which a given sign-changing function can be prescribed.
\end{introthm}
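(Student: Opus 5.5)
Assume $\lambda_{r,t}([\omega])>0$ and $\max_M f>0$. We prove Theorem~\ref{thm:intro-prescribed} by the subcritical approximation scheme of Aubin/Trudinger, run on the constrained functional. Write $E(u)=\int_M\bigl(\tfrac{2t}{n-1}|\nabla u|^2+\mu^{r,t}u^2\bigr)\,\dv$ and set
\[
\lambda_f^{r,t}=\inf\Bigl\{E(u):u\in H^1,\ \int_M f|u|^N\,\dv=1\Bigr\}.
\]
The admissible set is nonempty because $\max f>0$. Positivity of $\lambda_{r,t}$ makes $E$ coercive on $H^1$, so the infimum is positive.

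For $q<N$ we minimize $E$ under $\int_M f|u|^q=1$. By Rellich compactness a minimizer $u_q\ge0$ exists. It solves $L u_q=\lambda_q f u_q^{q-1}$, where $L$ is the operator in the statement. Elliptic regularity and the strong maximum principle then give $u_q>0$ and smooth. Testing with suitable cutoffs shows $\limsup_{q\to N}\lambda_q\le\lambda_f^{r,t}$, and the sequence $u_q$ is bounded in $H^1$.

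The key step is to rule out blow-up using the strict inequality. Suppose $\|u_q\|_\infty\to\infty$. Take a weak limit $u$ and set $v_q=u_q-u$. The Brezis--Lieb splitting gives $1=\int f u^N+\lim\int f v_q^N$. The concentration part is controlled by the sharp Sobolev inequality \eqref{eq:sharp-compact} with best constant $(t\Lambda_n)^{-1}$ for the operator $\tfrac{2t}{n-1}|\nabla\cdot|^2$:
\[
\int f|v_q|^N\le \max f\,\|v_q\|_N^N
\]
and
\[
\|v_q\|_N^2\le\bigl((t\Lambda_n)^{-1}+\eps\bigr)\tfrac{2t}{n-1}\|\nabla v_q\|_2^2+C_\eps\|v_q\|_2^2 .
\]
Write $a=\int f u^N$ and $b=1-a$. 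The energy inequality is then
\[
\lambda_f^{r,t}\ge\lambda_f^{r,t}a_+^{2/N}+t\Lambda_n(\max f)^{-2/N}b^{2/N}.
\]
By strict concavity of $s\mapsto s^{2/N}$ and the hypothesis $\lambda_f^{r,t}<t\Lambda_n(\max f)^{-2/N}$, this forces $b=0$. The same argument handles $a\le0$: there $E(u)\ge0$, so the left side would have to exceed $t\Lambda_n(\max f)^{-2/N}$, a contradiction. So $u_q\to u$ strongly in $L^N$, with $\int f u^N=1$, and $u$ minimizes. It solves $Lu=\lambda_f^{r,t} f u^{N-1}$ weakly.

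Regularity follows by Trudinger's $L^p$ bootstrap, giving $u$ smooth. The strong maximum principle gives $u>0$, since $u\ge0$ and $u\not\equiv0$. Rescaling, $\tilde u=(\lambda_f^{r,t})^{1/(N-2)}u$ solves the equation with right side exactly $f\tilde u^{N-1}$. Since $L$ is the conformal covariant (Theorem~\ref{thm:sharp-threshold}), the metric $\tilde u^{N-2}\omega$ has $\mu^{r,t}=f$.

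For Corollary~\ref{cor:weighted-tests} we test $\lambda_f^{r,t}$ on an Aubin bubble $u_\eps$ centred at $P$ in conformal normal-type coordinates, with $n\ge3$ so that $2n\ge6$ and the $\eps^2$ term dominates. Expanding $E(u_\eps)$ gives $t\Lambda_n\|u_\eps\|_N^2$ plus $\eps^2$ times a positive multiple of $(2n-1)\mu^{r,t}(P)-t\scal(P)$; the $-t\scal$ term comes from the metric expansion of $|\nabla u_\eps|^2$ and the volume form. Expanding the constraint, $f(x)=f(P)+\tfrac12 D^2f(P)(x,x)+\dots$ gives, after normalisation, a contribution proportional to $t(n-2)\Delta f(P)/f(P)$. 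The main obstacle is bookkeeping: getting the exact constants so that the three terms combine into the stated inequality. I would calibrate them against the unweighted Aubin criterion of Theorem~\ref{thm:sharp-threshold} (the case $f\equiv1$), and then only the Hessian term of $f$ needs computing.

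Finally, for Corollary~\ref{cor:prescribed-window} we use the monotonicity in $r$ from Theorem~\ref{thm:intro-phase}: positivity $\lambda_{r,t}>0$ holds exactly for $r<r_0$. On that interval we combine it with the local test above. Since $\mu^{r,t}(P)$ is affine and nonincreasing in $r$ by \eqref{eq:intro-reduction}, the test condition defines a half-line of admissible $r$. Intersecting the two conditions gives the sufficient parameter window.
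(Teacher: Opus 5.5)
Your proposal is correct and follows the paper's route. It uses subcritical minimization under $\int_M f|u|^q=1$ with compactness below the weighted level $t\Lambda_n(\max_M f)^{-2/N}$, an Aubin bubble at a maximum point of $f$ for the local test, and the intersection of the half-line $r>r_A$ with the positivity range $r<r_0$; for that range the right reference is Proposition~\ref{prop:positivity-endpoint}, not the saturation-dependent Theorem~\ref{thm:intro-phase}.

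The one place you go beyond the paper, which simply cites Aubin's Chapter~6, is the explicit concentration argument. There, since the constraint is at exponent $q$, the splitting should read $1=\int_M fu^N+\lim\int_M f|v_q|^q$ (Brezis--Lieb with $q\to N$), and then $\int_M f|v_q|^q\le \max_M f\,\Vol(M)^{1-q/N}\|v_q\|_N^q$ gives your energy inequality in the needed direction.
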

\bigskip

\noindent \textbf{Acknowledgements.} The author thanks Daniele Angella for many stimulating discussions and suggestions, and Carlo Scarpa for comments on an earlier version.
\section{Hermitian geometry and Analytic Preliminaries}\label{sec:geometry}

Throughout, $(M^{2n},J,\omega)$ is a compact connected complex manifold, $n=\dim_\C M\geq2$, and $g$ is a Hermitian metric. We set
\[
 \omega(X,Y)=g(JX,Y),\qquad
 \omega=\sqrt{-1}\,h_{i\bar j}\dd z^i\wedge\dd\bar z^j,
 \qquad \dv=\frac{\omega^n}{n!}.
\]
All conformal classes are taken with the complex structure $J$ fixed:
\[
 [\omega]=\{u^{2/(n-1)}\omega:u\in C^\infty(M),\ u>0\}.
\]
Every positive conformal factor preserves Hermitian compatibility. Thus the underlying Riemannian conformal class identifies with this Hermitian conformal class, and the classical Yamabe theorem applies without changing $J$.
The Laplacian is $\Delta=\dstar\dd=-\operatorname{div}\nabla$. The Lee form is defined by
\begin{equation}\label{eq:lee}
 \dd\omega^{n-1}=\theta\wedge\omega^{n-1}.
\end{equation}
For real forms, $J\alpha(X_1,\ldots,X_k)=(-1)^k\alpha(JX_1,\ldots,JX_k)$.

Let $T$ be the Chern torsion. Its nonzero holomorphic components are
\begin{equation}\label{eq:chern-components}
 T^k_{ij}=h^{k\bar\ell}(\partial_i h_{j\bar\ell}-\partial_j h_{i\bar\ell}),
 \qquad \theta_i=\sum_k T^k_{ik}.
\end{equation}
We use the full real tensor norm. In a frame with $h_{i\bar j}=\delta_{ij}$,
\begin{equation}\label{eq:norm-convention}
 \abs{T}^2=2\sum_{i,j,k}\abs{T^k_{ij}}^2,
 \qquad \abs{\theta}^2=2\sum_i\abs{\theta_i}^2.
\end{equation}

For Gauduchon's canonical family \cite{Gauduchon1997}, we use
\begin{equation}\label{eq:real-connection}
 g(\nabla^r_XY,Z) =g(\nabla^g_XY,Z)+\frac{r-1}{4}J\dd\omega(X,Y,Z)\notag -\frac{1+r}{4}\dd\omega(JX,Y,Z).
\end{equation}
Let \(r,t\in\R\), and set $A_r:=1+(n-1)r$ and $K_r:=3r^2-2r+1.$ Note that $K_r>0$ for all $r\in\R$, since $ K_r = 3\left(r - \frac{1}{3} \right)^2 + \frac{2}{3}.$ We fix the curvature convention; by \cite[Proposition 2.1]{Barbaro2023} the Ricci form of the \(r\)-Gauduchon connection satisfies
\[
\Ric^r(\omega)
=
\frac{r-1}{2}\dd\dd^*\omega
-
\ii\partial\bar\partial\log\omega^n.
\]
We use the normalization of \cite{APSSW2026}. Thus $\scal^r(\omega) = 2\tr_\omega\left(\Ric^r(\omega)\right)^{1,1}.$ With this normalization, \cite[Eq.\ (33)]{Gauduchon1984} implies that the usual \emph{Riemannian scalar curvature} $\mathrm{scal}(g)$ is related to the Chern scalar curvature by
\begin{equation} \label{eq:riemann-chern}
\mathrm{scal}(g) = \mathrm{scal}^{\mathrm{Ch}}(\omega) +d^*\theta -\tfrac14|T|^2 \,\, .
\end{equation}

Two identities will be used repeatedly.

\begin{lemma}\label{lem:Gauduchon-identities}
For every $r\in\R$,
\begin{align}
\scal^r
&=\scal^{\Ch}+(r-1)\bigl(\dd^*\theta+|\theta|^2\bigr),
\label{eq:scalar-r}\\
|T^r|^2
&=\frac{K_r}{2}|T|^2,
\qquad K_r=3r^2-2r+1.
\label{eq:torsion-r}
\end{align}
\end{lemma}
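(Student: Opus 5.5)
For \eqref{eq:scalar-r}, the plan is to work directly from the Ricci form formula quoted above from \cite[Proposition 2.1]{Barbaro2023}. Since $\Ric^{\Ch}=\Ric^1=-\ii\partial\bar\partial\log\omega^n$, we have
\[
 \Ric^r=\Ric^{\Ch}+\frac{r-1}{2}\,\dd\dd^*\omega,
 \qquad\text{hence}\qquad
 \scal^r-\scal^{\Ch}=(r-1)\tr_\omega\bigl(\dd\dd^*\omega\bigr)^{1,1}.
\]
So the whole computation reduces to one trace. The first step is to express $\dd^*\omega$ through the Lee form. The standard identity $\dd^*\omega=J\theta$ follows from \eqref{eq:lee} together with $*\omega=\omega^{n-1}/(n-1)!$; up to a sign fixed by the convention for $J$ on forms, $\dd^*\omega=\pm J\theta$. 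The second step is to compute $\tr_\omega(\dd J\theta)^{1,1}$. Using the Kähler identity $\Lambda\dd(J\alpha)=\pm\dd^*\alpha$, corrected by the torsion term in the non-Kähler setting, $[\Lambda,\partial]=\ii(\bar\partial^*+\bar\tau^*)$ with $\tau=[\Lambda,\partial\omega]$, we expect
\[
 \Lambda\,\dd(J\theta)=\pm\bigl(\dd^*\theta+|\theta|^2\bigr),
\]
the $|\theta|^2$ arising from the torsion correction, which for a $1$-form pairs $\theta$ with itself. With the normalization $\scal=2\tr_\omega$ and the norm convention \eqref{eq:norm-convention}, the constants should combine to give exactly $(r-1)(\dd^*\theta+|\theta|^2)$.

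As a cross-check, I would do the same computation in local holomorphic coordinates. Write $\theta=\theta_i\dd z^i+\overline{\theta_i}\dd\bar z^i$ and compute $\ii\partial\bar\partial$-type components of $\dd J\theta$. Then compare with $\dd^*\theta=-\operatorname{div}\theta^\sharp$, whose Chern-connection expression picks up $\theta_i\overline{\theta_i}$ terms via $\Gamma$ traces $\sum_k T^k_{ik}=\theta_i$ from \eqref{eq:chern-components}. This also fixes the signs.

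For \eqref{eq:torsion-r}, I would compute the torsion of \eqref{eq:real-connection} directly. The Levi-Civita connection is torsion free, so
\[
 g(T^r(X,Y),Z)=\frac{r-1}{4}\bigl(J\dd\omega(X,Y,Z)-J\dd\omega(Y,X,Z)\bigr)-\frac{1+r}{4}\bigl(\dd\omega(JX,Y,Z)-\dd\omega(JY,X,Z)\bigr).
\]
Next, decompose $\dd\omega=\partial\omega+\bar\partial\omega$ into types. Since $\omega$ is $(1,1)$, $\dd\omega$ has only $(2,1)+(1,2)$ parts, and $\partial\omega$ has components $\ii(\partial_i h_{j\bar\ell}-\partial_j h_{i\bar\ell})=\ii h_{k\bar\ell}T^k_{ij}$. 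On these types, $J$ and insertion of $JX$ act by $\pm\ii$. Evaluating on $(1,0)$ and $(0,1)$ vectors, $T^r$ then becomes a combination of three pieces: the $(2,0)$-part $T^k_{ij}$, its image with indices moved, and the conjugate $(1,1)$-type part. Their coefficients are linear in $r$: the $(2,0)\to(1,0)$ component has coefficient $\frac{1+r}{2}$, and there is a mixed component with coefficient proportional to $r-1$, suitably normalized. Sanity checks: $r=1$ must give $T$, with $K_1=2$, so $|T^1|^2=|T|^2$. At $r=-1$, the Bismut torsion is the totally skew $J\dd\omega$, with $K_{-1}=6$. At $r=0$ we get $K_0=1$, so the Lichnerowicz torsion should have squared norm $\tfrac12|T|^2$. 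Because the different components are $g$-orthogonal and each has squared norm a fixed multiple of $\sum|T^k_{ij}|^2$, $|T^r|^2$ is a quadratic polynomial in $r$ times $|T|^2$. The three checks above determine it uniquely, giving $\tfrac{K_r}{2}|T|^2$. So I will only need orthogonality of the components plus the three evaluations.

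The main obstacle is sign and normalization bookkeeping in \eqref{eq:scalar-r}: the convention for $J$ on forms, $\dd^*\omega=\pm J\theta$, and the factor $2$ in $\scal$. I would settle these by testing on a standard Hopf surface $S^1\times S^3$ with $\omega=|z|^{-2}\ii\partial\bar\partial|z|^2$. There, $\theta$ is parallel for Levi-Civita, so $\dd^*\theta=0$, and $\scal^{\Ch}$ and the Bismut scalar curvature are classically known, which pins the coefficient of $|\theta|^2$.
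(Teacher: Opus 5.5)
Your argument for \eqref{eq:scalar-r} is the paper's: trace Barbaro's Ricci-form identity. The trace is elementary. With the paper's convention one has $\dd^*\omega=-J\theta$ and $*\beta=J\beta\wedge\omega^{n-1}/(n-1)!$ for $1$-forms, so
\[
 \dd\dd^*\omega\wedge\frac{\omega^{n-1}}{(n-1)!}=-\dd(*\theta)+\theta\wedge*\theta=(\dd^*\theta+|\theta|^2)\,\dv .
\]
This fixes both signs at once. Your Hopf-surface test alone could not fix the sign of the $\dd^*\theta$ term, since $\dd^*\theta=0$ there.

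The gap is in \eqref{eq:torsion-r}, and it has two parts.

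\emph{The one coefficient you commit to is wrong.} On $(\partial_i,\partial_j,\partial_{\bar k})$ one has $J\dd\omega=-\ii\,\dd\omega$ and $\dd\omega(J\partial_i,\partial_j,\partial_{\bar k})=\ii\,\dd\omega(\partial_i,\partial_j,\partial_{\bar k})$. So in your displayed torsion formula \emph{both} terms contribute to the pure component, with weights $\frac{r-1}{2}$ and $\frac{1+r}{2}$ times the $r=1$ value $-\ii\,\dd\omega(\partial_i,\partial_j,\partial_{\bar k})$. The $(2,0)$ part of $T^r$ is therefore $rT$, not $\frac{1+r}{2}T$. Equivalently, $\nabla^r=\frac{1+r}{2}\nabla^{1}+\frac{1-r}{2}\nabla^{-1}$, and the Bismut torsion has pure part $-T$. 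With your coefficient, no normalization of the mixed part reproduces $K_r/2$: matching $|T^{-1}|^2=3|T|^2$ would force $|T^0|^2=|T|^2$ rather than $\frac12|T|^2$.

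\emph{The interpolation is circular.} Only the check at $r=1$ is independent. The values $|T^{-1}|^2=3|T|^2$ and $|T^0|^2=\frac12|T|^2$ are read off from $K_{-1}=6$ and $K_0=1$, that is, from the identity being proved.

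What is missing is the actual type count.
\begin{enumerate}
\item On mixed arguments the $\dd\omega(J\cdot,\cdot,\cdot)$ terms cancel. So the mixed part of $T^r$ is $\frac{r-1}{2}$ times the mixed part of the $3$-form $J\dd\omega$.
\item Each component $J\dd\omega(\partial_i,\partial_j,\partial_{\bar k})$ coincides with the Chern torsion component $g(T(\partial_i,\partial_j),\partial_{\bar k})$.
\item By total skewness, each such component occurs in one pure and two mixed slot arrangements. Hence the mixed part of $J\dd\omega$ has squared norm $2|T|^2$.
\end{enumerate}
Since the pure and mixed parts are orthogonal,
\[
|T^r|^2=r^2|T|^2+\tfrac{(1-r)^2}{4}\cdot 2|T|^2=\tfrac{K_r}{2}|T|^2 .
\]
This is exactly the paper's unitary-frame computation (pure part $rT$, mixed part $(1-r)P/2$). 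Once you have these two norms, no interpolation is needed.
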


\begin{proof}
Formula \eqref{eq:scalar-r} follows by tracing the Ricci-form identity for Gauduchon connections; see \cite[Proposition 2.1]{Barbaro2023} and compare \cite{Gauduchon1984}. For \eqref{eq:torsion-r}, choose unitary holomorphic coordinates at a point. The $(2,0)$ part of $T^r$ is $rT$, while the mixed component is $(1-r)P/2$, where, after lowering an index, $P_{\bar i j\bar k}=\overline{T_{ik\bar j}}$. Consequently the mixed component has squared norm $(1-r)^2|T|^2/2$, whereas the pure component contributes $r^2|T|^2$. Hence
\[
|T^r|^2
=\left(r^2+\frac{(1-r)^2}{2}\right)|T|^2
=\frac{3r^2-2r+1}{2}|T|^2.
\]
\end{proof}
In particular, $ \mathrm{scal}^{r}(\omega) = \mathrm{scal}^{\mathrm{Ch}}(\omega) = \mathrm{scal}(g)$ on a K\"ahler manifold. 

Let $T^k_{ij}$ denote the Chern torsion in a holomorphic frame unitary at a point, and let $\theta_i=T^k_{ik}$. Define the trace-free Chern torsion by
\begin{equation}\label{eq:Tcircle-def}
(\tf)^k_{ij}
:=
T^k_{ij}
-\frac1{n-1}\bigl(\delta^k_j\theta_i-\delta^k_i\theta_j\bigr).
\end{equation}
We use the symbol $\tf$ in order to avoid confusing the trace-free tensor with the torsion $T^0$ of the $r=0$ Gauduchon connection.

\begin{lemma}[Orthogonal torsion decomposition]\label{lem:trace-decomposition}
The decomposition \eqref{eq:Tcircle-def} is orthogonal and
\begin{equation}\label{eq:orthogonal}
|T|^2
=|\tf|^2+\frac{2}{n-1}|\theta|^2.
\end{equation}
Equivalently,
\begin{equation}\label{eq:Phi-Tcircle}
|\theta|^2-\frac{n-1}{2}|T|^2
=-\frac{n-1}{2}|\tf|^2\le0.
\end{equation}
\end{lemma}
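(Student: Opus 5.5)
The plan is a pointwise linear-algebra computation in a holomorphic frame that is unitary at the given point, using the norm convention \eqref{eq:norm-convention}. Write $T=\tf+S$, where
\[
S^k_{ij}:=\frac1{n-1}\bigl(\delta^k_j\theta_i-\delta^k_i\theta_j\bigr).
\]
We will establish three facts in turn: $\tf$ is trace-free, the pure-trace part $S$ is orthogonal to every trace-free tensor, and $|S|^2=\frac{2}{n-1}|\theta|^2$.

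\textbf{Trace-freeness.} Contracting $k$ with $j$ in \eqref{eq:Tcircle-def} gives
\[
\sum_k(\tf)^k_{ik}=\theta_i-\frac1{n-1}(n\theta_i-\theta_i)=0.
\]
So $\tf$ lies in the kernel of the trace map $T\mapsto\theta$.

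\textbf{Orthogonality.} For any tensor $R^k_{ij}$ that is antisymmetric in $i,j$ and trace-free ($\sum_k R^k_{ik}=0$), compute the Hermitian pairing:
\[
\sum_{i,j,k}R^k_{ij}\,\overline{S^k_{ij}}
=\frac1{n-1}\sum_{i,j,k}R^k_{ij}\bigl(\delta_{kj}\bar\theta_i-\delta_{ki}\bar\theta_j\bigr)
=\frac1{n-1}\Bigl(\sum_i\bar\theta_i\sum_j R^j_{ij}-\sum_j\bar\theta_j\sum_i R^i_{ij}\Bigr).
\]
Both inner sums are traces of $R$; the second is $-\sum_i R^i_{ji}$ by antisymmetry. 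Hence the pairing vanishes. Taking $R=\tf$ shows the decomposition is orthogonal. The real inner product induced by the full real tensor norm is twice the real part of this Hermitian pairing, so it vanishes as well.

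\textbf{Norm of $S$.} Expand
\[
\sum_{i,j,k}|S^k_{ij}|^2=\frac1{(n-1)^2}\sum_{i,j,k}\bigl|\delta_{kj}\theta_i-\delta_{ki}\theta_j\bigr|^2 .
\]
The two squared terms each contribute $n\sum_i|\theta_i|^2$. The cross term is
\[
-2\operatorname{Re}\sum_{i,j,k}\delta_{kj}\delta_{ki}\theta_i\bar\theta_j=-2\sum_i|\theta_i|^2 .
\]
The total is therefore $\frac{2(n-1)}{(n-1)^2}\sum_i|\theta_i|^2=\frac{2}{n-1}\sum_i|\theta_i|^2$. Multiplying by $2$ and using \eqref{eq:norm-convention} gives $|S|^2=\frac{2}{n-1}|\theta|^2$.

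By Pythagoras this yields \eqref{eq:orthogonal}. Multiplying \eqref{eq:orthogonal} by $\frac{n-1}{2}$ and rearranging gives \eqref{eq:Phi-Tcircle}.

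The only delicate point is bookkeeping the index placement and the factor $2$ in the real-norm convention. This factor must be applied consistently to $|T|^2$, $|\tf|^2$ and $|\theta|^2$, so the ratio $\frac{2}{n-1}$ is unaffected. The identity is pointwise and frame-independent, since every quantity in it is a tensor norm, so computing in a frame that is unitary at the point suffices.
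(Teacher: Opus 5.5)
Your proof is correct and follows the same route as the paper, which only asserts that a direct contraction in a unitary frame gives a vanishing cross term and trace part of squared norm $\frac{2}{n-1}|\theta|^2$. You carry out those contractions explicitly: trace-freeness of $\tf$, orthogonality via antisymmetry and the vanishing trace (the antisymmetry of $\tf$ follows at once from that of $T$ and $S$), and the norm of the trace part, applying the factor-$2$ convention of \eqref{eq:norm-convention} consistently.
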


\begin{proof}
The second summand in \eqref{eq:Tcircle-def} is the $U(n)$-trace component of the Chern torsion. A direct contraction in a unitary frame gives zero cross term and squared norm $2|\theta|^2/(n-1)$, which proves \eqref{eq:orthogonal}. Formula \eqref{eq:Phi-Tcircle} follows immediately.
\end{proof}

The trace-free component has a simple conformal transformation law.

\begin{lemma}\label{lem:Tcircle-conformal}
If $\widehat\omega=e^f\omega$, then
\begin{equation}\label{eq:Tcircle-conformal}
\widehat{\tf}=\tf,
\qquad
|\widehat{\tf}|_{\widehat\omega}^2=e^{-f}|\tf|_\omega^2.
\end{equation}
\end{lemma}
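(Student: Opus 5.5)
We prove the lemma by a direct computation in local holomorphic coordinates, using the explicit formula \eqref{eq:chern-components} for the Chern torsion. Write $\widehat h_{i\bar j}=e^f h_{i\bar j}$, so that $\widehat h^{k\bar\ell}=e^{-f}h^{k\bar\ell}$.

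\textbf{Step 1: transformation of $T$ and $\theta$.} Differentiating the conformal metric gives $\partial_i\widehat h_{j\bar\ell}=e^f(\partial_i h_{j\bar\ell}+f_i h_{j\bar\ell})$, where $f_i=\partial_i f$. Inserting this into \eqref{eq:chern-components} yields
\[
 \widehat T^k_{ij}=T^k_{ij}+\delta^k_j f_i-\delta^k_i f_j .
\]
Contracting over $k=j$ then gives
\[
 \widehat\theta_i=\theta_i+(n-1)f_i .
\]
This is consistent with the standard law $\widehat\theta=\theta+(n-1)\dd f$, which also follows from \eqref{eq:lee}, since $\widehat\omega^{n-1}=e^{(n-1)f}\omega^{n-1}$.

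\textbf{Step 2: invariance of $\tf$.} Substitute both formulas from Step 1 into \eqref{eq:Tcircle-def}, and note that \eqref{eq:Tcircle-def} is written in coordinate form, valid in any holomorphic frame and not only in a unitary one. The trace correction changes by
\[
 -\tfrac{1}{n-1}\bigl(\delta^k_j(n-1)f_i-\delta^k_i(n-1)f_j\bigr)=-(\delta^k_jf_i-\delta^k_if_j).
\]
This exactly cancels the change in $T^k_{ij}$. Hence $\widehat{\tf}{}^k_{ij}=\tf^k_{ij}$, as a $(1,0)$-vector-valued $(2,0)$-form. The main point requiring care is that the components in \eqref{eq:chern-components} are genuinely the tensor components in arbitrary holomorphic coordinates, so that the comparison is frame-independent. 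Since the formula is tensorial, this causes no difficulty.

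\textbf{Step 3: the norm.} The norm of a tensor with two lower indices and one upper index involves two inverse metrics and one metric:
\[
 \abs{\tf}^2=2\,h^{i\bar a}h^{j\bar b}h_{k\bar c}\,\tf^k_{ij}\overline{\tf^c_{ab}} .
\]
This reduces to \eqref{eq:norm-convention} in a unitary frame. Under the conformal change the three factors scale by $e^{-f}\cdot e^{-f}\cdot e^{f}=e^{-f}$, while the components of $\tf$ are unchanged by Step 2. Therefore $\abs{\widehat{\tf}}^2_{\widehat\omega}=e^{-f}\abs{\tf}^2_\omega$, which completes the proof.
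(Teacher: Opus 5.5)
Your proposal is correct and follows the same route as the paper: you derive $\widehat T^k_{ij}=T^k_{ij}+\delta^k_jf_i-\delta^k_if_j$ and $\widehat\theta_i=\theta_i+(n-1)f_i$, observe that the $\dd f$ terms cancel in the trace-free part, and read off the conformal weight $e^{-f}\cdot e^{-f}\cdot e^{f}=e^{-f}$ of a norm with two lower indices and one upper index. The only difference is that you write out the coordinate details the paper leaves implicit, namely the derivative of $e^f h_{j\bar\ell}$, the tensoriality of the trace-free definition, and the explicit contraction formula for the norm.
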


\begin{proof}
Under $h_{i\bar j}\mapsto e^fh_{i\bar j}$, the Chern torsion and its trace transform as
\[
\widehat T^k_{ij}
=T^k_{ij}+\delta^k_j f_i-\delta^k_i f_j,
\qquad
\widehat\theta_i=\theta_i+(n-1)f_i.
\]
Substitution in \eqref{eq:Tcircle-def} cancels the $\dd f$ terms, proving tensorial invariance. The norm of a vector-valued two-form has conformal weight $-1$, giving the second identity.
\end{proof}

\subsection{Definition and exact reduction to the Chern slice}

\begin{definition}\label{def:mu}
For $r,t\in\R$, the $(r,t)$-deformed Gauduchon scalar curvature is
\begin{equation}\label{eq:mu-definition}
\mu^{r,t}(\omega)
:=
\scal^r(\omega)
+\frac{t-A_r}{n-1}\dd^*\theta
+\frac{t-2A_r}{2K_r}|T^r|^2.
\end{equation}
\end{definition}

At $r=1$ this becomes
\begin{equation}\label{eq:Chern-slice}
\mu^{1,t}
=
\scal^{\Ch}
+\frac{t-n}{n-1}\dd^*\theta
+\frac{t-2n}{4}|T|^2,
\end{equation}
which is the deformation studied in \cite{APSSW2026}. In particular,
\begin{equation}\label{eq:special-t}
\mu^{1,2n-1}=\scal(g),
\end{equation}
by the classical Hermitian scalar-curvature identity of Gauduchon \cite{Gauduchon1984}.

\begin{proposition}[Exact dependence on the connection]\label{prop:reduction}
For all $r,t\in\R$,
\begin{equation}\label{eq:exact-reduction}
\mu^{r,t} = \mu^{1,t} -\frac{n-1}{2}(r-1)|\tf|^2.
\end{equation}
\end{proposition}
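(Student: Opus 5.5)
Since everything is pointwise, the plan is to substitute the two identities of Lemma~\ref{lem:Gauduchon-identities} into Definition~\ref{def:mu}, collect terms, and recognize the combination handled by Lemma~\ref{lem:trace-decomposition}. By \eqref{eq:torsion-r}, the torsion term becomes
\[
\frac{t-2A_r}{2K_r}\,|T^r|^2=\frac{t-2A_r}{4}\,|T|^2 .
\]
The factor $K_r$ cancels exactly, and this cancellation is the reason for the normalization in the definition. By \eqref{eq:scalar-r}, the scalar curvature is $\scal^r=\scal^{\Ch}+(r-1)(\dd^*\theta+|\theta|^2)$.

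Inserting both into \eqref{eq:mu-definition} gives
\[
\mu^{r,t}=\scal^{\Ch}+\Bigl(r-1+\frac{t-A_r}{n-1}\Bigr)\dd^*\theta+(r-1)|\theta|^2+\frac{t-2A_r}{4}|T|^2 .
\]
We then subtract the Chern slice \eqref{eq:Chern-slice}, using $A_r-n=(n-1)(r-1)$, i.e.\ $A_r=n+(n-1)(r-1)$. The $\dd^*\theta$ coefficient becomes
\[
r-1+\frac{t-n}{n-1}-(r-1)=\frac{t-n}{n-1},
\]
which matches \eqref{eq:Chern-slice}, so the divergence terms cancel. The torsion coefficient differs from $\frac{t-2n}{4}$ by $-\frac{2(n-1)(r-1)}{4}=-\frac{(n-1)(r-1)}{2}$. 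Therefore
\[
\mu^{r,t}-\mu^{1,t}=(r-1)\Bigl(|\theta|^2-\frac{n-1}{2}|T|^2\Bigr).
\]

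Finally, \eqref{eq:Phi-Tcircle} in Lemma~\ref{lem:trace-decomposition} identifies the bracket with $-\frac{n-1}{2}|\tf|^2$, which yields \eqref{eq:exact-reduction}.

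The main obstacle is not the algebra but the consistency of the normalizations. Formula \eqref{eq:scalar-r} must be taken with exactly the same conventions as \eqref{eq:Chern-slice}: the scalar curvature as twice the complex trace, the full real norms of \eqref{eq:norm-convention}, and the sign of $\dd^*$. Otherwise the coefficient of $|\theta|^2$ would not pair with that of $|T|^2$ into the trace-free combination. I would check this on the $r=-1$ Bismut case against the known formula for the Bismut scalar curvature, or check it via \eqref{eq:special-t} and \eqref{eq:riemann-chern}. If a mismatch appeared, I would expect it to come from an inconsistent factor of $2$ in these conventions.
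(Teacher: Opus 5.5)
Your proposal is correct and follows the paper's own proof: substitute \eqref{eq:scalar-r} and \eqref{eq:torsion-r} into \eqref{eq:mu-definition}, observe that the $\dd^*\theta$ coefficient collapses to $(t-n)/(n-1)$ and the torsion coefficient to $(t-2n)/4-(n-1)(r-1)/2$, subtract the Chern slice to obtain $(r-1)\bigl(|\theta|^2-\frac{n-1}{2}|T|^2\bigr)$, and finish with \eqref{eq:Phi-Tcircle}. Your closing remark about consistency of normalizations is a reasonable sanity check, but the argument does not need it, since it takes Lemma~\ref{lem:Gauduchon-identities} as given, exactly as the paper does.
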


\begin{proof}
Insert \eqref{eq:scalar-r} and \eqref{eq:torsion-r} into \eqref{eq:mu-definition}. The coefficient of $\dd^*\theta$ simplifies to $(t-n)/(n-1)$, while
\[
\frac{t-2A_r}{2K_r}|T^r|^2
=\frac{t-2-2(n-1)r}{4}|T|^2.
\]
Subtracting the $r=1$ expression gives
\[
\mu^{r,t}-\mu^{1,t}
=(r-1)\left(|\theta|^2-\frac{n-1}{2}|T|^2\right).
\]
Now apply \eqref{eq:Phi-Tcircle}.
\end{proof}

%\begin{remark}[The Bismut slice]\label{rem:Bismut} At $r=-1$,\[\mu^{-1,t}=\mu^{1,t}+(n-1)|\tf|^2.\]
%Thus, for the pointwise existence criteria developed below, moving from the Chern connection toward the Bismut connection is never favourable; the useful direction is $r>1$ wherever $\tf\ne0$.
%\end{remark}

\subsection{Conformal transformation and the LCK boundary}

The Chern deformation satisfies a critical conformal transformation law \cite{APSSW2026}. Proposition \ref{prop:reduction} and Lemma \ref{lem:Tcircle-conformal} show that the same law holds for every Gauduchon parameter.

\begin{proposition}\label{prop:conformal-law}
Let $f\in C^\infty(M)$ and $\widehat\omega=e^f\omega$. Then
\begin{equation}\label{eq:conformal-f}
\mu^{r,t}(\widehat\omega)
=e^{-f}\left(
\mu^{r,t}(\omega)
+t\Delta_\omega f
-\frac{t(n-1)}{2}|\dd f|_\omega^2
\right).
\end{equation}
Equivalently, if $u>0$ and $\widehat\omega=u^{2/(n-1)}\omega$, then
\begin{equation}\label{eq:conformal-u}
\mu^{r,t}(\widehat\omega) =u^{-(N-1)} \left( \frac{2t}{n-1}\Delta_\omega u +\mu^{r,t}(\omega)u \right), \qquad N=\frac{2n}{n-1}.
\end{equation}
\end{proposition}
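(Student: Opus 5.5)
Since Proposition~\ref{prop:reduction} gives $\mu^{r,t}(\omega)=\mu^{1,t}(\omega)-\frac{n-1}{2}(r-1)|\tf|_\omega^2$ for every Hermitian metric, the proof splits into two independent transformation laws. The first is the conformal law for the Chern slice $\mu^{1,t}$, which is established in \cite{APSSW2026}:
\[
\mu^{1,t}(\widehat\omega)=e^{-f}\Bigl(\mu^{1,t}(\omega)+t\Delta_\omega f-\tfrac{t(n-1)}{2}|\dd f|^2_\omega\Bigr).
\]
The second is Lemma~\ref{lem:Tcircle-conformal}, which gives $|\widehat{\tf}|^2_{\widehat\omega}=e^{-f}|\tf|^2_\omega$. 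The correction term $-\frac{n-1}{2}(r-1)|\tf|^2$ therefore simply acquires the same factor $e^{-f}$ and contributes no derivative terms in $f$. Adding the two laws gives \eqref{eq:conformal-f} for every $r$.

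If one wants a self-contained derivation of the Chern-slice law rather than citing it, I would proceed term by term with $\widehat\omega=e^f\omega$. I would use three standard facts:
\begin{itemize}[label=$\cdot$]
\item the Chern scalar curvature transforms as $\scal^{\Ch}(\widehat\omega)=e^{-f}(\scal^{\Ch}+n\Delta f)$, with the factor $2$ in the normalization;
\item the Lee form transforms as $\widehat\theta=\theta+(n-1)\dd f$, and hence $\widehat{\dd^*}\widehat\theta=e^{-f}\bigl(\dd^*\theta+(n-1)\Delta f-(n-1)\langle \dd f,\theta\rangle-(n-1)^2|\dd f|^2\bigr)$, using the weight of $\dd^*$ on $1$-forms in real dimension $2n$;
\item the torsion norm transforms as $|\widehat T|^2=e^{-f}|T+\text{(trace part from } \dd f)|^2$, which by \eqref{eq:orthogonal} equals $e^{-f}\bigl(|\tf|^2+\tfrac{2}{n-1}|\theta+(n-1)\dd f|^2\bigr)$.
\end{itemize}
Substituting into \eqref{eq:Chern-slice}, the coefficient $(t-n)/(n-1)$ of $\dd^*\theta$ combines with the $n\Delta f$ from the scalar curvature to give $t\Delta f$. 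The mixed terms $\langle\dd f,\theta\rangle$ cancel between the codifferential and torsion contributions. The $|\dd f|^2$ terms collect to $-\frac{t(n-1)}{2}|\dd f|^2$. This bookkeeping of constants is the only real obstacle, and it is exactly why the coefficients in Definition~\ref{def:mu} were chosen as they are.

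For the $u$-form \eqref{eq:conformal-u}, I set $e^f=u^{2/(n-1)}$, so that $f=\frac{2}{n-1}\log u$. Then $\dd f=\frac{2}{n-1}\frac{\dd u}{u}$ and
\[
\Delta f=\frac{2}{n-1}\Bigl(\frac{\Delta u}{u}+\frac{|\dd u|^2}{u^2}\Bigr),
\]
using the convention $\Delta=\dstar\dd$. In $t\Delta f-\frac{t(n-1)}{2}|\dd f|^2$ the gradient terms are $\frac{2t}{n-1}\frac{|\dd u|^2}{u^2}$ and $-\frac{t(n-1)}{2}\cdot\frac{4}{(n-1)^2}\frac{|\dd u|^2}{u^2}$, and these cancel. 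What remains is $\frac{2t}{n-1}\frac{\Delta u}{u}$. Finally, $e^{-f}u^{-1}=u^{-2/(n-1)-1}=u^{-(N-1)}$, since $N-1=\frac{n+1}{n-1}$, which yields \eqref{eq:conformal-u}.
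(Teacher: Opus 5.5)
Your argument is correct and is essentially the paper's own proof: the exact reduction \eqref{eq:exact-reduction} splits $\mu^{r,t}$ into the Chern slice, whose law is quoted from \cite{APSSW2026}, plus $-\frac{n-1}{2}(r-1)|\tf|^2$, which has pure weight $e^{-f}$ by Lemma~\ref{lem:Tcircle-conformal}, and the $u$-form follows from $e^f=u^{2/(n-1)}$ as you compute. One correction to your optional self-contained sketch: with $\Delta=\dstar\dd$ the Chern scalar curvature transforms through the Chern Laplacian $-2\tr_\omega(\ii\partial\bar\partial f)=\Delta f+\langle\dd f,\theta\rangle$, i.e.\ $\scal^{\Ch}(\widehat\omega)=e^{-f}\bigl(\scal^{\Ch}+n\Delta f+n\langle\dd f,\theta\rangle\bigr)$, so the mixed terms do not cancel between the codifferential and torsion contributions alone (those give $\bigl(-(t-n)+(t-2n)\bigr)\langle\dd f,\theta\rangle=-n\langle\dd f,\theta\rangle$) but only once this extra $+n\langle\dd f,\theta\rangle$ is included, as the Riemannian case $t=2n-1$ already forces.
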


\begin{proof}
For $r=1$, \eqref{eq:conformal-f} is \cite[Lemma 2.1 and Proposition 3.2]{APSSW2026}. By \eqref{eq:exact-reduction} and \eqref{eq:Tcircle-conformal}, the correction term $-(n-1)(r-1)|\tf|^2/2$ transforms simply by multiplication by $e^{-f}$, so the same formula holds for arbitrary $r$. The substitution $u=e^{(n-1)f/2}$ gives \eqref{eq:conformal-u}.
\end{proof}

\begin{remark}\label{rem:t-zero}
When $t=0$, \eqref{eq:conformal-f} reduces to
\[
\mu^{r,0}(e^f\omega)=e^{-f}\mu^{r,0}(\omega).
\]
Hence the pointwise sign of $\mu^{r,0}$ is a conformal invariant, even though the corresponding Yamabe equation loses its second-order term.
\end{remark}

For \(n\ge3\), vanishing of \(\tf\) is equivalent to the LCK condition.

\begin{proposition}[The LCK boundary]\label{prop:LCK-boundary}
Let $n\ge3$. Then
\begin{equation}\label{eq:LCK-equivalence}
\tf\equiv0
\quad\Longleftrightarrow\quad
\omega\text{ is locally conformally K\"ahler}.
\end{equation}
Consequently, on an LCK conformal class the functions $\mu^{r,t}$ are independent of $r$.
\end{proposition}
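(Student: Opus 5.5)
The plan is to characterize local conformal Kählerness through the torsion: for $n\ge3$, $\omega$ is LCK if and only if $\dd\omega=\frac{1}{n-1}\theta\wedge\omega$ (with $\dd\theta=0$ then automatic), and to identify $\tf$ with the primitive part of $\dd\omega$.

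\textbf{Step 1.} In a holomorphic frame unitary at a point, the $(2,1)$-part of $\dd\omega$, namely $\partial\omega$, has components determined linearly and bijectively by the Chern torsion $T^k_{ij}$ (up to a constant factor, with indices lowered by $h$). Since $T$ is built from $\partial_i h_{j\bar\ell}-\partial_j h_{i\bar\ell}$, this is exactly the coefficient of $\dd z^i\wedge\dd z^j\wedge\dd\bar z^\ell$ in $\partial\omega$. The Lefschetz decomposition $\partial\omega=(\partial\omega)_0+\alpha\wedge\omega$ has a primitive part and a trace part. Under this dictionary, the trace part corresponds to the term $\frac{1}{n-1}(\delta^k_j\theta_i-\delta^k_i\theta_j)$ in \eqref{eq:Tcircle-def}, with $\alpha=\frac{1}{n-1}\theta^{1,0}$ by \eqref{eq:lee}. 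The primitive part corresponds to $\tf$. Hence
\[
\tf\equiv0 \iff \dd\omega=\frac{1}{n-1}\theta\wedge\omega ,
\]
using reality of $\omega$ to pass from $\partial\omega$ to $\dd\omega$.

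\textbf{Step 2.} Conclude in both directions.

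For $\Leftarrow$: if $\omega$ is locally $e^{f}\omega_K$ with $\omega_K$ Kähler, then Lemma \ref{lem:Tcircle-conformal} gives $\tf(\omega)=\tf(\omega_K)=0$, since the Kähler torsion vanishes.

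For $\Rightarrow$: from $\dd\omega=\frac{1}{n-1}\theta\wedge\omega$, applying $\dd$ gives $\dd\theta\wedge\omega=0$. For $n\ge3$, wedging with $\omega$ is injective on $2$-forms, so $\dd\theta=0$. Locally we may then write $\theta=(n-1)\dd f$, and $e^{-f}\omega$ is closed, hence Kähler. This injectivity is exactly where $n\ge3$ enters; for $n=2$ the primitive part of a $3$-form vanishes, consistent with the remark in the introduction.

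\textbf{Step 3.} The final assertion follows directly. Since $\tf\equiv0$ is conformally invariant by Lemma \ref{lem:Tcircle-conformal}, Proposition \ref{prop:reduction} gives $\mu^{r,t}=\mu^{1,t}$ for every metric in the class.

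\textbf{Main obstacle.} The main obstacle is the bookkeeping in Step 1. One must verify that the contraction identifying the trace part of $\partial\omega$ yields exactly the coefficient $1/(n-1)$ matching both \eqref{eq:lee} and \eqref{eq:Tcircle-def}. I would check this by contracting $\Lambda_\omega$ against $\partial\omega$. Alternatively, I would verify directly that $\tf=0$ forces $T^k_{ij}=\frac{1}{n-1}(\delta^k_j\theta_i-\delta^k_i\theta_j)$, and then compute $\partial\omega$ from it.
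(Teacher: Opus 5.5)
Your proposal is correct and follows the paper's proof essentially step for step. Like the paper, you identify $\tf\equiv0$ with $\partial\omega=\frac1{n-1}\theta^{1,0}\wedge\omega$, pass by reality to $\dd\omega=\frac1{n-1}\theta\wedge\omega$, and use injectivity of $\alpha\mapsto\alpha\wedge\omega$ on two-forms for $n\ge3$ to get $\dd\theta=0$. The only difference is cosmetic: for the converse you apply the conformal invariance of $\tf$ (Lemma~\ref{lem:Tcircle-conformal}) locally to a K\"ahler representative, rather than reversing the coefficient comparison, and this is equally valid.
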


\begin{proof}
The identity $\tf=0$ is equivalent, after lowering the upper index of the Chern torsion, to
\[
T_{ij\bar k}
=\frac1{n-1}\left(h_{j\bar k}\theta_i-h_{i\bar k}\theta_j\right).
\]
Equivalently,
\[
\partial\omega=\frac1{n-1}\theta^{1,0}\wedge\omega,
\]
and therefore
\begin{equation}\label{eq:domega-LCK}
\dd\omega=\frac1{n-1}\theta\wedge\omega.
\end{equation}
Applying $\dd$ once more gives $\dd\theta\wedge\omega=0$. For $n\ge3$, the Lefschetz map $\alpha\mapsto\alpha\wedge\omega$ is injective on two-forms, hence $\dd\theta=0$. Thus \eqref{eq:domega-LCK} is the LCK equation. The converse follows by reversing the coefficient comparison. The final assertion is immediate from \eqref{eq:exact-reduction}.
\end{proof}
\subsection{Analytic preliminaries}\label{sec:analytic}
Let
\begin{equation}\label{eq:Lambda-K}
 \Lambda_n:=2n\,\Vol(S^{2n})^{1/n},
 \qquad
 K_{2n}^2:=\frac{2}{(n-1)\Lambda_n}.
\end{equation}
Then $K_{2n}$ is the Euclidean optimal $H^1\to L^N$ Sobolev constant in real dimension $2n$, in the convention
\[
 \|u\|_{L^N(\R^{2n})}^2
 \le K_{2n}^2\|\nabla u\|_{L^2(\R^{2n})}^2;
\]
see \cite{Aubin1976Sobolev,Talenti1976}. Notice that
\begin{equation}\label{eq:at-K}
 \frac{2t}{n-1}=t\Lambda_nK_{2n}^2.
\end{equation}

On a compact manifold, the first optimal compact Sobolev inequality of Hebey--Vaugon \cite{HebeyVaugon1996} says that there is a finite smallest constant $B_0(g)$ such that
\begin{equation}\label{eq:sharp-compact}
 \left(\int_M\abs u^NdV_g\right)^{2/N}
 \le K_{2n}^2\int_M|\nabla u|^2dV_g
 +B_0(g)\int_Mu^2dV_g
\end{equation}
for every $u\in H^1(M)$.
We use \cite{Aubin1976Sobolev,Talenti1976,HebeyVaugon1996,DruetHebey2002} for these sharp inequalities.
Testing constants gives
\begin{equation}\label{eq:B0-volume}
 B_0(g)\ge \Vol(M,g)^{-1/n}.
\end{equation}
Equivalently,
\begin{equation}\label{eq:B0-sup}
 B_0(g)
 =\sup_{u\ne0}
 \frac{\left(\int_M\abs u^N\right)^{2/N}-K_{2n}^2\int|\nabla u|^2}
 {\int u^2}.
\end{equation}

%We shall use the following classical analytic facts.

%\begin{theorem}[Aubin's critical upper bound]\label{thm:aubin-input}
%For every smooth potential $V$ on a compact real $2n$-manifold and every $a_t>0$, the critical quotient
%\[\inf_{u\ne0}\frac{\int(a_t|\nabla u|^2+Vu^2)}{D(u)}\]
%is at most $a_t/K_{2n}^2=t\Lambda_n$.  If the inequality is strict, the infimum is achieved by a smooth positive function.
%\end{theorem}

%\begin{proof}[Reference and normalization]
%This is the standard critical compactness theorem underlying the Yamabe problem; see \cite{Aubin1976Yamabe,Aubin1998,DruetHebey2002}.  Local Euclidean bubbles give the upper bound $a_t/K_{2n}^2$.  If the infimum is strictly smaller, a normalized minimizing sequence cannot lose mass by concentration, because every concentration point costs at least the Euclidean level.  Strong $H^1$ compactness follows, and elliptic regularity plus the strong maximum principle gives a smooth positive minimizer.  Identity \eqref{eq:at-K} converts the Euclidean level to $t\Lambda_n$.
%\end{proof}

%Applied to $V=\mu^{r,t}(\omega)$, this gives
%\begin{equation}\label{eq:sharp-upper} \lambda_{r,t}([\omega])\le t\Lambda_n.
%\end{equation}

We also use the Djadli--Druet extremal theorem \cite{DjadliDruet2001}.  In the present normalization, if
\begin{equation}\label{eq:DD}
 B_0(g)>
 \frac{2n-2}{4(2n-1)}K_{2n}^2\max_M\scal(g),
\end{equation}
then equality in \eqref{eq:sharp-compact} is attained by a nonzero $H^1$ function.  In particular, an extremal exists whenever $\scal(g)\le0$ on $M$; see also \cite{DruetHebey2002}.

\subsection{Critical functions and the endpoint theorem of Collion}
For a smooth function $h$ on a compact real $m$-manifold, $m\ge4$, define
\begin{equation}\label{eq:critical-h-functional}
 \lambda(h)
 :=\inf_{u\ne0}
 \frac{\int_M(|\nabla u|^2+hu^2)dV_g}
 {\left(\int_M|u|^{2m/(m-2)}dV_g\right)^{(m-2)/m}}.
\end{equation}
Following Hebey--Vaugon and Collion \cite{HebeyVaugon2001,Collion2007}, $h$ is called weakly critical when
\[
 \lambda(h)=K_m^{-2}.
\]
We use the following two consequences of this theory.

\begin{theorem}\label{thm:collion-input}
Let $m > 4$.
\begin{enumerate}[label=\textup{(\roman*)}]
\item If $h$ is weakly critical, then
\begin{equation}\label{eq:collion-necessary}
 \frac{4(m-1)}{m-2}h\ge \scal(g)
 \qquad\text{pointwise on }M.
\end{equation}
\item Suppose $h$ is weakly critical, $\Delta_g+h$ is coercive, and
\begin{equation}\label{eq:collion-strict}
 \frac{4(m-1)}{m-2}h>\scal(g)
 \qquad\text{on }M.
\end{equation}
Assume that there is a family $h_s\le h$, $h_s\not\equiv h$, converging to $h$ in $C^{0,\alpha}$ for some $\alpha\in(0,1)$, such that $h_s$ is subcritical and $\Delta_g+h_s$ is coercive for all $s$ sufficiently close to the endpoint.  Then the weakly critical quotient for $h$ admits a smooth positive minimizer.
\end{enumerate}
\end{theorem}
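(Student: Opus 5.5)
Both parts are quoted from the Hebey--Vaugon--Collion theory \cite{HebeyVaugon2001,Collion2007}. So the plan is to reduce each assertion to one standard mechanism: test-function expansion for (i), blow-up analysis for (ii). Throughout, write $D(u)=\bigl(\int_M|u|^{2m/(m-2)}\bigr)^{(m-2)/m}$ and $c_m=\frac{m-2}{4(m-1)}$.

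For (i), I argue by contradiction. Suppose $h(x_0)<c_m\scal(g)(x_0)$ at some point $x_0$. Take Aubin's test functions
\[
u_\eps(x)=\eta(x)\bigl(\eps+\rho(x)^2\bigr)^{-(m-2)/2},
\]
where $\rho$ is the geodesic distance to $x_0$ and $\eta$ is a cutoff. For $m>4$, the classical expansion in normal coordinates gives
\[
\frac{\int_M(|\nabla u_\eps|^2+hu_\eps^2)}{D(u_\eps)}
=K_m^{-2}+C_m\,\eps\bigl(h(x_0)-c_m\scal(g)(x_0)\bigr)+o(\eps),
\]
with some constant $C_m>0$. The restriction $m>4$ is what makes the $\eps$-order term finite and dominant. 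The scalar-curvature contribution comes from the expansion of $\sqrt{\det g}$ in normal coordinates. The $u^2$-integral $\int hu_\eps^2$ is of the same order and is localized at $x_0$. Under our assumption the $\eps$-term is negative, so $\lambda(h)<K_m^{-2}$. This contradicts weak criticality, and \eqref{eq:collion-necessary} follows.

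For (ii), first note that $h_s\le h$ gives $\lambda(h_s)\le\lambda(h)=K_m^{-2}$. Since $h_s$ is subcritical, $\lambda(h_s)<K_m^{-2}$. Aubin's theorem then yields smooth positive minimizers $u_s$ normalized by $\int u_s^{2m/(m-2)}=1$. They solve
\[
\Delta_g u_s+h_su_s=\lambda(h_s)u_s^{(m+2)/(m-2)}.
\]
The $C^{0,\alpha}$ convergence $h_s\to h$ implies $\lambda(h_s)\to\lambda(h)$, because the quotient depends continuously on the potential in $L^\infty$. Uniform coercivity of $\Delta_g+h_s$ bounds $u_s$ in $H^1$, so a subsequence converges weakly to some $u\ge0$ with
\[
\Delta_g u+hu=K_m^{-2}u^{(m+2)/(m-2)}.
\]
If $u\not\equiv0$, the Brezis--Lieb splitting of the quotient shows $u$ is a minimizer for $\lambda(h)$. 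Elliptic regularity and the strong maximum principle then make it smooth and positive.

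The main obstacle is excluding $u\equiv0$. In that case the $u_s$ must concentrate at a single point $x_0$, with energy exactly one bubble. I would invoke the Druet/Hebey--Vaugon pointwise estimates: $u_s$ is bounded above by a rescaled standard bubble centred at $x_s\to x_0$, and on compact sets away from $x_0$ a rescaling of $u_s$ converges to a Green's function. Inserting these estimates into the Pohozaev identity on a small ball around $x_s$ gives, for $m>4$, a limiting relation whose leading term forces
\[
h(x_0)\le c_m\scal(g)(x_0).
\]
This contradicts the strict inequality \eqref{eq:collion-strict}, so concentration cannot happen and the weak limit is a nonzero minimizer. The delicate part is the sharp $C^0$ control that lets the Pohozaev boundary terms vanish, and I would import it from \cite{Collion2007} rather than reprove it.
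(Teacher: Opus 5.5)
Your proposal is correct and follows the same route as the paper, which proves both parts by citing Collion. Part (i) is exactly the Aubin bubble test at a point where $\frac{4(m-1)}{m-2}h<\scal(g)$, and part (ii) is Collion's Theorem~1, whose proof follows the scheme you outline: minimizers $u_s$ for the subcritical $h_s$, the Brezis--Lieb dichotomy for the weak limit, and exclusion of concentration via sharp pointwise bubble estimates and a Pohozaev identity. One small point: your $H^1$ bound on $u_s$ needs no coercivity at all, since $\int_M|\nabla u_s|^2\le\lambda(h_s)+\|h_s\|_\infty\Vol(M,g)^{2/m}$ under the normalization $\int_M u_s^{2m/(m-2)}=1$.
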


\begin{proof}[Reference]
Part (i) is the local necessary condition obtained by testing a weakly critical potential with concentrating bubbles; it is Proposition 1 of \cite{Collion2007} in the constant-weight case.  Part (ii) is the constant-weight specialization of Theorem 1 in \cite{Collion2007}.  These are the hypotheses used later: a weakly critical endpoint, a subcritical family approaching it from below, coercivity, and the strict pointwise scalar-curvature barrier.
\end{proof}

\section{The constant-curvature problem}
By \eqref{eq:conformal-u}, \(\mu^{r,t}(\widetilde\omega)=\lambda\) is equivalent to solving
\begin{equation}\label{eq:rt-DY}
\frac{2t}{n-1}\Delta u + \mu^{r,t}(\omega)u = \lambda u^{\frac{n+1}{n-1}}.
\end{equation}
For \(t>0\), \eqref{eq:rt-DY} is a semilinear elliptic equation. For \(t=0\), the second-order term disappears, and the problem degenerates.

\subsection{Yamabe-type functional and invariant}

Fix \(t>0\). Define
\begin{equation}\label{eq:functional}
\mathcal F^{r,t}(u)
=
\frac{
\displaystyle
\int_M
\left(
\frac{2t}{n-1}|\nabla u|^2
+
\mu^{r,t}(\omega)u^2
\right)\frac{\omega^n}{n!}
}
{
\displaystyle
\left(
\int_M u^N\,\frac{\omega^n}{n!}
\right)^{2/N}
},
\quad u\in H^1(M)\setminus\{0\},\;\; 
u>0.
\end{equation}
The proof of \cite[Theorem 3.10]{APSSW2026}, together with Proposition \ref{prop:conformal-law} gives the following variational characterization.
\begin{theorem} \label{thm:variational} The equation \eqref{eq:rt-DY} is the Euler--Lagrange equation of the $(r,t)$-deformed Einstein--Hilbert functional $\mathcal F^{r,t}$ restricted to the conformal class of $\omega$ with appropriate $\lambda$.
\end{theorem}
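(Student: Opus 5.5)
We compute the first variation of $\mathcal F^{r,t}$ directly and identify its critical points with solutions of \eqref{eq:rt-DY}. Then we use Proposition~\ref{prop:conformal-law} to reinterpret these as conformal metrics of constant $\mu^{r,t}$-curvature.

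Write $Q(u)=\int_M\bigl(\frac{2t}{n-1}|\nabla u|^2+\mu^{r,t}(\omega)u^2\bigr)\dv$ and $D(u)=\int_M u^N\dv$, so that $\mathcal F^{r,t}=Q/D^{2/N}$. For a smooth positive $u$ and any $\varphi\in C^\infty(M)$, the functions $u+s\varphi$ stay positive for $|s|$ small. Integrating by parts with $\Delta=\dstar\dd$ gives
\[
\frac{\dd}{\dd s}\Big|_{s=0}Q(u+s\varphi)=2\int_M\Bigl(\frac{2t}{n-1}\Delta u+\mu^{r,t}u\Bigr)\varphi\,\dv,
\qquad
\frac{\dd}{\dd s}\Big|_{s=0}D(u+s\varphi)=N\int_M u^{N-1}\varphi\,\dv .
\]
The quotient rule then shows that $u$ is critical if and only if
\[
\frac{2t}{n-1}\Delta u+\mu^{r,t}u=\lambda u^{N-1},
\qquad \lambda=\frac{Q(u)}{D(u)}.
\]
Since $N-1=\frac{n+1}{n-1}$, this is exactly \eqref{eq:rt-DY}. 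Conversely, multiplying \eqref{eq:rt-DY} by $u$ and integrating forces $\lambda=Q(u)/D(u)$. So "appropriate $\lambda$" means $\lambda=\mathcal F^{r,t}(u)\,D(u)^{2/N-1}$, and $\lambda$ equals the value of $\mathcal F^{r,t}$ under the normalization $D(u)=1$. This is also where scale invariance enters: $\mathcal F^{r,t}(cu)=\mathcal F^{r,t}(u)$, so the constant is only determined up to this normalization.

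Next we identify the functional with the conformal class. We parametrize $[\omega]$ by $\widehat\omega=u^{2/(n-1)}\omega$, which gives $\widehat\omega^n=u^{N}\omega^n$, hence $D(u)=\Vol(\widehat\omega)$. By \eqref{eq:conformal-u},
\[
Q(u)=\int_M\mu^{r,t}(\widehat\omega)\,u^{N}\dv=\int_M\mu^{r,t}(\widehat\omega)\,\dd V_{\widehat\omega}.
\]
Therefore $\mathcal F^{r,t}(u)=\Vol(\widehat\omega)^{-2/N}\int_M\mu^{r,t}(\widehat\omega)\,\dd V_{\widehat\omega}$, which is the $(r,t)$-deformed Einstein--Hilbert functional restricted to $[\omega]$. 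By \eqref{eq:conformal-u}, a critical point $u$ satisfies $\mu^{r,t}(\widehat\omega)=u^{-(N-1)}\lambda u^{N-1}=\lambda$. This completes the chain: critical points, solutions of \eqref{eq:rt-DY}, and constant-curvature metrics in $[\omega]$ all coincide.

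The only point needing care is regularity when $u$ is merely in $H^1$. In that case $u$ is a weak solution, and smoothness follows from standard critical-exponent elliptic regularity (Trudinger's argument). Positivity follows from the strong maximum principle, exactly as in \cite[Theorem 3.10]{APSSW2026}. The $r$-dependence adds nothing to this step, because by Proposition~\ref{prop:reduction} it enters only through the smooth potential $-\frac{n-1}{2}(r-1)|\tf|^2$. I expect this regularity bookkeeping, rather than the variation itself, to be the main step requiring citation.
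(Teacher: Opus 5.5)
Your proposal is correct and follows essentially the same route as the paper, whose proof simply invokes the first-variation computation of \cite[Theorem 3.10]{APSSW2026} together with the conformal law \eqref{eq:conformal-u}. You carry out that first-variation computation explicitly for general $r$, where only the smoothness of the potential $\mu^{r,t}$ matters. You then use \eqref{eq:conformal-u} to identify $\mathcal F^{r,t}(u)$ with the volume-normalized total $\mu^{r,t}$-curvature of $u^{2/(n-1)}\omega$, which is exactly what the paper's citation stands in for.
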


\begin{proposition}[\cite{Yamabe1960}; see also \cite{LeeParker1987,APSSW2026}] \label{prop:minimizer}
Let $(M^{2n},J,\omega)$ be a compact Hermitian manifold of complex dimension $n$. For $2 \leq p \leq N$, consider the functional
$$\mathcal F_p^{r,t}(u)
=
\frac{
\displaystyle
\int_M
\left(
\frac{2t}{n-1}|\nabla u|^2
+
\mu^{r,t}(\omega)u^2
\right)\frac{\omega^n}{n!}
}
{
\displaystyle
\left(
\int_M u^p\,\frac{\omega^n}{n!}
\right)^{2/p}
}, \quad u\in H^1(M)\setminus\{0\},\;\; 
u>0 $$
Then the functional $\mathcal F_p^{r,t}$ is bounded from below. Moreover, when the exponent $p$ satisfies
$$
2 \leq p < N = \frac{2n}{n-1} \,\, ,
$$
the functional $\mathcal F_p^{r,t}$ admits a minimum.
\end{proposition}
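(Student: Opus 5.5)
Writing $a_t=\frac{2t}{n-1}>0$, $V=\mu^{r,t}(\omega)\in C^\infty(M)$ and $\dv=\omega^n/n!$, the plan is the classical Yamabe subcritical argument for the operator $a_t\Delta+V$. Restricting to $u>0$ is harmless: replacing $u$ by $\abs u$ changes neither numerator nor denominator, since $\abs{\nabla\abs u}=\abs{\nabla u}$ a.e.

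\emph{Lower bound.} The case of a nonnegative numerator is trivial. Otherwise we need a lower bound on the negative part. For any $u$,
\[
\int_M(a_t|\nabla u|^2+Vu^2)\,\dv\ \ge\ -\norm{V^-}_\infty\int_Mu^2\,\dv .
\]
By H\"older's inequality with $p\ge2$,
\[
\int_M u^2\,\dv\le \Vol(M)^{1-2/p}\Bigl(\int_M|u|^p\,\dv\Bigr)^{2/p}.
\]
Hence $\mathcal F^{r,t}_p\ge-\norm{V^-}_\infty\Vol(M)^{1-2/p}$ for every $2\le p\le N$, which gives boundedness from below uniformly in $p$.

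\emph{Existence for $2\le p<N$.} Let $\lambda_p=\inf\mathcal F^{r,t}_p$, and take a minimizing sequence $u_k\ge0$ normalized by $\norm{u_k}_{L^p}=1$. By the lower bound above and H\"older,
\[
a_t\norm{\nabla u_k}_2^2\le \lambda_p+1+\norm{V}_\infty\Vol(M)^{1-2/p},
\]
and $\norm{u_k}_2$ is also bounded. So $(u_k)$ is bounded in $H^1(M)$. Pass to a subsequence converging weakly in $H^1$. Since $p<N$, Rellich--Kondrachov gives that the embedding $H^1\hookrightarrow L^p$ is compact, so $u_k\to u$ strongly in $L^p$ and in $L^2$. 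Therefore $\norm{u}_{L^p}=1$, $u\ge0$, and $u\ne0$. Weak lower semicontinuity of $\int\abs{\nabla u}^2$ and strong $L^2$ convergence of $\int Vu_k^2$ give $\mathcal F^{r,t}_p(u)\le\lambda_p$, so $u$ is a minimizer.

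\emph{Regularity and positivity.} If the functional is required on $u>0$, we must upgrade $u$. The minimizer solves weakly
\[
a_t\Delta u+Vu=\lambda_p u^{p-1}.
\]
Since $p<N$ is subcritical, a standard bootstrap gives $u\in L^q$ for all $q$, then $W^{2,q}$ and finally $C^\infty$ by Schauder theory. Now $u\ge0$, $u\not\equiv0$, and $a_t\Delta u+(V+c)u\ge0$ with $c=\norm{V}_\infty$. The strong maximum principle, applicable on the compact connected manifold $M$, then gives $u>0$.

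The only delicate point is the compactness step: it is exactly what fails at $p=N$, which is why existence is asserted only for $p<N$. For $p<N$ it follows directly from Rellich--Kondrachov, so I expect no real obstacle. The mild subtlety is the bootstrap when $p$ is close to $N$, handled as in \cite{Yamabe1960,LeeParker1987} via Trudinger's iteration.
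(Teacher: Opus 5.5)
Your proposal is correct and is essentially the classical Yamabe subcritical argument that the paper invokes by citing \cite{Yamabe1960,LeeParker1987} without reproducing it: a minimizing sequence bounded in $H^1$, Rellich compactness for $p<N$, weak lower semicontinuity, then elliptic regularity and the strong maximum principle. Two small repairs are needed in the positivity step. First, $c=\norm{V}_\infty$ need not suffice when $\lambda_p<0$ and $p>2$; take instead $c\ge\norm{V}_\infty+\abs{\lambda_p}\,\norm{u}_\infty^{p-2}$, or equivalently treat $V-\lambda_pu^{p-2}$ as a bounded potential. Second, the bootstrap only gives $C^{2,\alpha}$ before positivity, since $s\mapsto s^{p-1}$ is not smooth at $s=0$; upgrade to $C^\infty$ once $u>0$ is known.
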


\begin{definition}\label{def:lambda}
The \((r,t)\)-Gauduchon deformed Yamabe invariant of the Hermitian conformal
class \([\omega]\) is
\begin{equation}\label{eq:lambda}
\lambda_{r,t}([\omega]) :=\inf_{u\in H^1(M)\setminus\{0\}}
\mathcal F^{r,t}(u).
\end{equation}
\end{definition}
The Euler--Lagrange equation of \(\mathcal F^{r,t}\) is precisely
\eqref{eq:rt-DY}. Hence minimizers give metrics of constant
\(\mu^{r,t}\)-curvature.

%Define
%\begin{equation}\label{eq:Qrt}
%a_t:=\frac{2t}{n-1},\quad \energy_{r,t}(u)
%:=
%\int_M\left(a_t|\nabla u|^2+\mu^{r,t}(\omega)u^2\right)\,\mathrm dV_\omega
%\end{equation}
%and
%\begin{equation}\label{eq:lambda}
%\lambda_{r,t}([\omega])
%:=
%\inf_{u\in H^1(M)\setminus\{0\}}
%\frac{\energy_{r,t}(u)}{\norm{u}_{L^N}^2}.
%\end{equation}
Conformal transformation shows that \eqref{eq:lambda} depends only on the Hermitian conformal class.

Henceforth, for simplicity, we write
\begin{equation}\label{eq:QD}
 a_t:=\frac{2t}{n-1},\quad \energy_{r,t}(u)=\int_M(a_t\abs{\nabla u}^2+\mu^{r,t}u^2)\frac{\omega^n}{n!},
 \quad D(u)=\left(\int_M\abs u^N\frac{\omega^n}{n!}\right)^{2/N},
\end{equation}
\begin{equation}\label{eq:QD2}
\lambda_{r,t}= \lambda_{r,t}([\omega])=\inf_{u\in H^1(M)\setminus\{0\}}\frac{\energy_{r,t}(u)}{D(u)},\quad W = \abs{\tf}^2,\; c_n = \frac{n-1}{2}.
\end{equation}
Unless a metric is displayed, the background is $g$. We write $\ty=2n-1$ for the Riemannian value on the Chern slice. Notice the exact normalization $a_t=t\Lambda_n K_{2n}^2$.

\begin{lemma}\label{lem:coercivity}
If $\lambda_{r,t} = \lambda_{r,t}([\omega])>0$, there is $C>0$ such that $\energy_{r,t}(u)\geq C\norm u_{H^1}^2$ for every $u\in H^1(M)$.
\end{lemma}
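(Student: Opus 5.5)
The plan is a standard interpolation between the positive lower bound on the Rayleigh quotient and Hölder's inequality on $M$. We argue in two steps.

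\textbf{Step 1: an $L^2$ bound.} By definition of $\lambda_{r,t}$, every $u\in H^1(M)$ satisfies
\[
\energy_{r,t}(u)\ge\lambda_{r,t}D(u).
\]
Since $M$ is compact and $N>2$, Hölder gives
\[
\int_M u^2\,\frac{\omega^n}{n!}\le \Vol(M)^{1-2/N}D(u),
\]
so $D(u)\ge \Vol(M)^{-1/n}\norm u_{L^2}^2$. Hence
\[
\energy_{r,t}(u)\ge c_0\norm u_{L^2}^2,
\qquad c_0:=\lambda_{r,t}\Vol(M)^{-1/n}>0.
\]

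\textbf{Step 2: recovering the gradient.} Set $m:=\max_M|\mu^{r,t}|$, which is finite because $\mu^{r,t}$ is smooth. Then
\[
a_t\norm{\nabla u}_{L^2}^2=\energy_{r,t}(u)-\int_M\mu^{r,t}u^2\le \energy_{r,t}(u)+m\norm u_{L^2}^2\le\Bigl(1+\frac{m}{c_0}\Bigr)\energy_{r,t}(u).
\]
Adding the $L^2$ bound from Step 1 yields
\[
\norm u_{H^1}^2\le\Bigl(\frac{1+m/c_0}{a_t}+\frac1{c_0}\Bigr)\energy_{r,t}(u),
\]
which is the claim with $C$ the reciprocal of the bracket. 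We use $a_t>0$, which holds because $t>0$ in all variational statements.

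\textbf{Remarks on the argument.} We have to check that Step 1 holds for every $u\in H^1$, including sign-changing functions. The infimum in Definition~\ref{def:lambda} is taken over all of $H^1\setminus\{0\}$. Moreover, the quotient at $|u|$ equals the quotient at $u$, so the inequality holds for every $u$, and trivially for $u=0$.

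We expect no real obstacle. The only point needing care is that positivity of $\lambda_{r,t}$ alone does not control the gradient. The lower-order potential term must first be absorbed using the $L^2$ bound, which is why Step 1 has to come before Step 2.
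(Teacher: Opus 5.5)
Your proposal is correct and follows essentially the same route as the paper's proof: first control $\norm u_{L^2}^2$ by $\energy_{r,t}(u)$ via H\"older and $\lambda_{r,t}>0$, then absorb the potential term to control the gradient. The only cosmetic difference is that you bound the potential term by $\max_M|\mu^{r,t}|$, while the paper uses $\norm{h^-}_\infty$; your explicit remark that the inequality holds for sign-changing $u$ is a small point the paper leaves implicit.
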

\begin{proof}

Put $h:=\mu^{r,t}(\omega)$ and $\mathcal V:=\Vol(M,\omega)$.  By definition of $\lambda_{r,t}$,
\begin{equation}\label{eq:Y-controls-N}
  \energy_{r,t}(u)\geq \lambda_{r,t}\|u\|_N^2.
\end{equation}
Hölder's inequality gives
\[
  \|u\|_2^2\leq \mathcal V^{1-2/N}\|u\|_N^2
  =\mathcal V^{1/n}\|u\|_N^2,
\]
so
\begin{equation}\label{eq:L2-controlled}
  \|u\|_2^2\leq \frac{\mathcal V^{1/n}}{\lambda_{r,t}}\energy_{r,t}(u).
\end{equation}
Let $h^-:=\max\{-h,0\}$.  Since $h=h^+-h^-$,
\begin{align*}
  a_t\|\nabla u\|_2^2
  &=\energy_{r,t}(u)-\int_Mhu^2\frac{\omega^n}{n!}\\
  &=\energy_{r,t}(u)-\int_Mh^+u^2\frac{\omega^n}{n!}
    +\int_Mh^-u^2\frac{\omega^n}{n!}\\
  &\leq \energy_{r,t}(u)+\|h^-\|_{\infty}\|u\|_2^2.
\end{align*}
Using \eqref{eq:L2-controlled},
\[
  \|\nabla u\|_2^2
  \leq \frac1{a_t}\left(1+
  \frac{\mathcal V^{1/n}\|h^-\|_{\infty}}{\lambda_{r,t}}\right)\energy_{r,t}(u).
\]
Together with \eqref{eq:L2-controlled}, this yields
$\|u\|_{H^1}^2\leq c_{r,t}\energy_{r,t}(u)$, proving the assertion.
\end{proof}

\subsection{Sharp threshold and a deformed degree}

%Let
%\begin{equation}\label{eq:Lambda}
%\Lambda_n:=2n\Vol(S^{2n})^{1/n}.
%\end{equation}
The following is the usual Aubin compactness statement for a critical quotient with smooth potential.

\begin{theorem}[Sharp threshold]\label{thm:sharp-threshold}
For every $r\in\R$ and $t>0$,
\begin{equation}\label{eq:sharp-upper}
\lambda_{r,t}([\omega])\le t\Lambda_n.
\end{equation}
If the inequality is strict, then the infimum in \eqref{eq:lambda} is attained by a smooth positive function. Consequently $[\omega]$ contains a metric of constant $\mu^{r,t}$-curvature.
\end{theorem}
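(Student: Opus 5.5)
The plan is to run the classical Aubin argument for the Yamabe problem, with the potential $h:=\mu^{r,t}(\omega)$ in place of the conformal-Laplacian potential and with $a_t=\frac{2t}{n-1}=t\Lambda_nK_{2n}^2$ in place of the usual gradient coefficient. No geometric input is needed beyond smoothness of $\mu^{r,t}(\omega)$; by Proposition~\ref{prop:conformal-law} the quotient is conformally invariant, so we may work with the fixed background $g$.

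\emph{Upper bound.} Fix $P\in M$ and take normal coordinates at $P$. Test with the truncated bubbles $u_\eps(x)=\phi(x)\bigl(\eps/(\eps^2+|x|^2)\bigr)^{(2n-2)/2}$, where $\phi$ is a cutoff supported in a small ball around $P$. The standard expansions in real dimension $m=2n\ge4$ give
\[
\int a_t|\nabla u_\eps|^2\Big/\Bigl(\int u_\eps^N\Bigr)^{2/N}\to a_tK_{2n}^{-2}=t\Lambda_n .
\]
The potential term satisfies $\int h u_\eps^2\big/\|u_\eps\|_N^2\le\|h\|_\infty\|u_\eps\|_2^2/\|u_\eps\|_N^2\to0$, because $\|u_\eps\|_2^2=O(\eps^2)$ (or $O(\eps^2|\log\eps|)$ when $n=2$) while $\|u_\eps\|_N$ stays bounded below. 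Hence $\lambda_{r,t}\le t\Lambda_n$.

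\emph{Existence under strict inequality.} Use the subcritical approximation of Proposition~\ref{prop:minimizer}. For $p<N$, the functional $\mathcal F^{r,t}_p$ has a minimizer $u_p\ge0$, which we may take nonnegative since $|u|$ has the same quotient. Normalize $\|u_p\|_p=1$. Then $u_p$ solves $a_t\Delta u_p+hu_p=\lambda_p u_p^{p-1}$. By elliptic regularity and the strong maximum principle (writing $a_t\Delta u_p+(h+C)u_p\ge0$), $u_p$ is smooth and positive. Next, $\limsup_{p\to N}\lambda_p\le\lambda_{r,t}$: test with a fixed $u$ and use continuity of $p\mapsto\|u\|_p$. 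The $u_p$ are bounded in $H^1$, so $u_p\rightharpoonup u$ weakly in $H^1$ and strongly in $L^2$.

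\emph{The key step: ruling out $u=0$.} Apply the sharp inequality \eqref{eq:sharp-compact}, in the weaker form with any constant $B$ and $K_{2n}^2+\delta$:
\[
1=\|u_p\|_p^2\le \mathcal V^{\,2/p-2/N}\bigl((K_{2n}^2+\delta)\|\nabla u_p\|_2^2+B_\delta\|u_p\|_2^2\bigr),
\]
together with $a_t\|\nabla u_p\|_2^2=\lambda_p-\int hu_p^2$. If $u=0$, then $\|u_p\|_2\to0$ and we obtain $1\le (K_{2n}^2+\delta)\lambda_{r,t}/a_t+o(1)$, that is, $\lambda_{r,t}\ge a_t/(K_{2n}^2+\delta)$ for every $\delta$. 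This gives $\lambda_{r,t}\ge t\Lambda_n$, contradicting strict inequality. So $u\ne0$.

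We then obtain $u\ge0$ as a weak solution of $a_t\Delta u+hu=\lambda u^{N-1}$ with $\lambda=\lim\lambda_p$. By weak lower semicontinuity, $\mathcal Q_{r,t}(u)\le\lambda$, while testing the equation gives $\mathcal Q_{r,t}(u)=\lambda\|u\|_N^N$ with $\|u\|_N\le1$. Combining these yields $\|u\|_N=1$ and that $u$ is a minimizer (the sign of $\lambda$ is handled in the standard way, treating separately the cases $\lambda_{r,t}\le 0$ and $\lambda_{r,t}>0$).

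\emph{Regularity.} Critical-exponent regularity follows Trudinger's bootstrap: first $u\in L^q$ for some $q>N$, then Schauder estimates give $u\in C^\infty$, and the maximum principle gives $u>0$. Finally, $\widehat\omega=u^{2/(n-1)}\omega$ has constant $\mu^{r,t}$ by \eqref{eq:conformal-u}.

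I expect the main obstacle to be the non-concentration step, since it alone uses the sharp Sobolev constant. The delicate bookkeeping there is that $h$ may be negative, which is handled by the $L^2$ convergence above, and the relation between the normalizations $\|\cdot\|_p$ and $\|\cdot\|_N$, which is handled through the factor $\mathcal V^{2/p-2/N}\to1$.
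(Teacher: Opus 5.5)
Your proposal is correct and takes essentially the same route as the paper. The paper divides by $a_t$ and cites the classical Aubin argument from Aubin and Lee--Parker (local bubbles for the upper bound, strict inequality ruling out concentration); you have spelled out that Lee--Parker subcritical-approximation argument, with $a_t=t\Lambda_nK_{2n}^2$ turning the Euclidean level into $t\Lambda_n$, and the sign bookkeeping you defer does close (for $\lambda<0$, lower semicontinuity and testing the equation force $\|u\|_N\ge1$, and for $\lambda_{r,t}=0$ one gets $\mathcal Q_{r,t}(u)=0$ directly).
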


\begin{proof}
Divide the quotient by $a_t$. It becomes the standard critical Sobolev quotient for an operator $\Delta+H$ with smooth potential $H=\mu^{r,t}/a_t$ on the real $2n$-manifold $(M,g)$. Local Aubin bubbles yield the upper bound, and the strict inequality prevents concentration and gives compactness; see \cite{Aubin1998,LeeParker1987}. Multiplying the sharp constant by $a_t=2t/(n-1)$ gives $t\Lambda_n$.
\end{proof}

In particular, $\lambda_{r,t}([\omega])\le0$ automatically lies below the concentration level. In this non-positive case, the usual maximum-principle argument also gives uniqueness of the constant $\mu^{r,t}$-curvature representative up to constant rescaling.

Let $\eta$ denote the unique volume-one Gauduchon metric in the conformal class, so that $\dd\dd^c\eta^{n-1}=0$, equivalently $\dd^*_{\eta}\theta_\eta=0$ \cite{Gauduchon1977,Gauduchon1984}. Define
\[
 \Gamma^r=\int_M\scal^r(\eta)\frac{\eta^n}{n!}.
\]
These quantities use the scalar and torsion conventions of Section~\ref{sec:geometry}.
\begin{definition}\label{def:deformed-degree}
The \((r,t)\)-deformed Gauduchon degree is
\begin{equation}\label{eq:degree}
    \degree^{r,t}([\omega]) :=\int_M\mu^{r,t}(\eta)\frac{\eta^n}{n!}
   =\Gamma^r+\frac{t-2A_r}{4}\int_M\abs{T_\eta}^2\frac{\eta^n}{n!}.
\end{equation}
\end{definition}

\begin{proposition}[Deformed degree criterion]\label{prop:degree-criterion}
If
\begin{equation}\label{eq:degree-condition}
\degree^{r,t}([\omega])<t\Lambda_n,
\end{equation}
then $[\omega]$ contains a metric of constant $\mu^{r,t}$-curvature. If $\degree^{r,t}\leq0$, it is unique up to scale and has nonpositive curvature. Moreover,
\begin{equation}\label{eq:degree-r}
\degree^{r,t}([\omega])
=
\degree^{1,t}([\omega])
-\frac{n-1}{2}(r-1)
\int_M|\tf_\eta|^2\,\mathrm dV_\eta.
\end{equation}
A sufficient special case is $0<t\leq2A_r,$ and $\Gamma^r\leq0;$ more generally, the criterion allows positive $\Gamma^r$ whenever
\begin{equation}\label{eq:degree-positive}
 \Gamma^r<t\Lambda_n+\frac{2A_r-t}{4}\int_M\abs{T_\eta}^2\frac{\eta^n}{n!}.
\end{equation}
\end{proposition}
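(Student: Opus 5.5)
The proof rests on one observation: the constant function $u\equiv1$ is an admissible test function for the quotient defining $\lambda_{r,t}$ on the Gauduchon representative $\eta$.

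\textbf{Existence.} Since $\lambda_{r,t}$ depends only on the conformal class (Proposition~\ref{prop:conformal-law}), we may compute it with background $\eta$, which has unit volume. Testing with $u\equiv1$ makes the gradient term vanish and gives $D(1)=\Vol(M,\eta)^{2/N}=1$. Hence
\[
\lambda_{r,t}([\omega])\le \energy_{r,t}(1)=\int_M\mu^{r,t}(\eta)\frac{\eta^n}{n!}=\degree^{r,t}([\omega]).
\]
Under \eqref{eq:degree-condition} this gives $\lambda_{r,t}<t\Lambda_n$. Theorem~\ref{thm:sharp-threshold} then yields a smooth positive minimizer, hence a metric of constant $\mu^{r,t}$-curvature in $[\omega]$.

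\textbf{Sign and uniqueness when $\degree^{r,t}\le0$.} Suppose $\widehat\omega=u^{2/(n-1)}\eta$ has constant $\mu^{r,t}=c$. Then by \eqref{eq:conformal-u}, $a_t\Delta_\eta u+\mu^{r,t}(\eta)u=cu^{N-1}$. To get the sign of $c$, I would divide by $u$ and integrate against $dV_\eta$. Integration by parts gives $\int\Delta u/u=-\int|\nabla u|^2/u^2\le0$. This yields
\[
c\int u^{N-2}\,dV_\eta=\degree^{r,t}-a_t\int|\nabla\log u|^2\le\degree^{r,t}\le0 .
\]
Thus $c\le0$, with $c=0$ only if $u$ is constant and $\degree^{r,t}=0$.

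For uniqueness up to scale, I take two solutions. Writing one as a conformal factor $v$ over the other, $v$ solves $a_t\Delta v+cv=c'v^{N-1}$ with $c,c'\le0$ of the same sign, since the sign of $\lambda_{r,t}$ is a conformal invariant. After rescaling to $c=c'$, I would compare at the max and min of $v$. If $c<0$, at the maximum $\Delta v\ge0$ forces $c\,v\ge c\,v^{N-1}$, i.e.\ $v\le1$; the minimum similarly gives $v\ge1$. If $c=0$, $v$ is harmonic, hence constant. This maximum-principle comparison is the standard one, and I expect it to be the main point needing care: one must confirm that both constants share a sign and arrange the rescaling correctly. The remark after Theorem~\ref{thm:sharp-threshold} already asserts this, so I would cite it.

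\textbf{Formulas.} For \eqref{eq:degree-r}, integrate \eqref{eq:exact-reduction} at $\eta$. For the expression in Definition~\ref{def:deformed-degree}, note that $\dd^*_\eta\theta_\eta=0$ kills the $\dd^*\theta$ term, and Lemma~\ref{lem:Gauduchon-identities} gives $\frac{t-2A_r}{2K_r}|T^r|^2=\frac{t-2A_r}{4}|T|^2$. Condition \eqref{eq:degree-positive} is just \eqref{eq:degree-condition} rearranged. The special case $0<t\le2A_r$, $\Gamma^r\le0$ makes the torsion term nonpositive, so $\degree^{r,t}\le0<t\Lambda_n$.
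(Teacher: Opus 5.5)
Your proof is correct and follows the paper's own argument. Testing the quotient with $u\equiv1$ on the unit-volume Gauduchon representative $\eta$ gives $\lambda_{r,t}([\omega])\le\degree^{r,t}([\omega])$, Theorem~\ref{thm:sharp-threshold} then supplies the minimizer, and \eqref{eq:degree-r} comes from integrating \eqref{eq:exact-reduction} at $\eta$.

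The paper only invokes ``the usual maximum-principle argument'' for sign and uniqueness, so your explicit integration of $\Delta u/u$ and the max/min comparison fill in a step the paper leaves implicit. One slip to correct there: at a maximum of $v$ the equation gives $c\,v^{N-1}=a_t\Delta v+c\,v\ge c\,v$, not the reverse inequality you wrote. For $c<0$ this still yields $v\le1$, as you conclude.
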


\begin{proof}
Since $\Vol(M,\eta)=1$, the test function $u\equiv1$ gives
\[
\lambda_{r,t}([\omega])\le \degree^{r,t}([\omega]).
\]
Thus \eqref{eq:degree-condition} implies the strict inequality in Theorem \ref{thm:sharp-threshold}. Formula \eqref{eq:degree-r} follows by integrating \eqref{eq:exact-reduction} for $\eta$.
\end{proof}

\begin{remark}\label{rem:degree-sign}
At $t=2A_r$, the degree equals $\Gamma^r$ whenever this value of $t$ lies in the positive range. In particular $\degree^{1,2n}=\Gamma^\Ch$, which is twice the Chern degree normalized in \cite{ACS2017}. This equality of degree values does not identify the deformed equation with the undeformed Chern--Yamabe equation: a divergence term remains away from the Gauduchon representative.
\end{remark}

The degree criterion gives the following large-\(r\) existence result.

\begin{theorem}[Large connection parameter]\label{thm:large-r}
Let $t>0$. If $\tf\not\equiv0$ in the conformal class $[\omega]$, then there exists $R\in\R$ such that for every $r>R$ the class $[\omega]$ contains a metric of constant $\mu^{r,t}$-curvature.

If $n\ge3$, the hypothesis is equivalent to saying that $\omega$ is not locally conformally K\"ahler.
\end{theorem}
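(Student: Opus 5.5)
The plan is to apply the deformed degree criterion, Proposition~\ref{prop:degree-criterion}, using the linear dependence on $r$ recorded in \eqref{eq:degree-r}. Let $\eta$ be the unit-volume Gauduchon representative of $[\omega]$. By \eqref{eq:degree-r},
\[
\degree^{r,t}([\omega])=\degree^{1,t}([\omega])-\frac{n-1}{2}(r-1)\int_M|\tf_\eta|^2\,\mathrm dV_\eta ,
\]
so for fixed $t$ the degree is an affine function of $r$. Its slope is $-\tfrac{n-1}{2}\int_M|\tf_\eta|^2\,\mathrm dV_\eta$.

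The first step is to show that this slope is strictly negative, that is, $\tf_\eta\not\equiv0$. By Lemma~\ref{lem:Tcircle-conformal}, $\tf$ is conformally invariant as a tensor. Since $\eta=e^f\omega$ for some smooth $f$, we have $\tf_\eta=\tf_\omega$, and $|\tf_\eta|^2_\eta=e^{-f}|\tf|^2_\omega$. The hypothesis $\tf\not\equiv0$ in the class therefore gives a point where $|\tf_\eta|^2_\eta>0$. By continuity this holds on an open set, so $I:=\int_M|\tf_\eta|^2\,\mathrm dV_\eta>0$.

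The second step is to pick the threshold. Set
\[
R:=1+\frac{2\bigl(\degree^{1,t}([\omega])-t\Lambda_n\bigr)}{(n-1)I},
\]
which is a finite real number because $\degree^{1,t}([\omega])$ is finite. For every $r>R$ we then get $\degree^{r,t}([\omega])<t\Lambda_n$. Proposition~\ref{prop:degree-criterion} (via Theorem~\ref{thm:sharp-threshold}) then produces a metric of constant $\mu^{r,t}$-curvature in $[\omega]$. Enlarging $R$ further makes the degree $\le0$ as well, which gives uniqueness up to scale, although the statement does not require this.

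The final assertion, that for $n\ge3$ the hypothesis is equivalent to $\omega$ not being LCK, is exactly Proposition~\ref{prop:LCK-boundary}. LCK is a property of the conformal class, and $\tf$ vanishes identically for one representative if and only if it does for all of them, by Lemma~\ref{lem:Tcircle-conformal}.

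I expect no real obstacle. The only point needing care is the first step: transferring non-vanishing of $\tf$ from $\omega$ to the Gauduchon representative $\eta$, which the conformal invariance handles.
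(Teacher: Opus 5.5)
Your proposal is correct and matches the paper's proof: conformal invariance of $\tf$ (Lemma~\ref{lem:Tcircle-conformal}) gives $\int_M|\tf_\eta|^2\,\mathrm dV_\eta>0$, so the affine formula \eqref{eq:degree-r} pushes the degree below $t\Lambda_n$ for large $r$ and Proposition~\ref{prop:degree-criterion} applies. The final equivalence for $n\ge3$ comes from Proposition~\ref{prop:LCK-boundary}; your explicit threshold $R$ only makes the paper's ``for all sufficiently large $r$'' quantitative.
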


\begin{proof}
The property $\tf\equiv0$ is conformally invariant by Lemma \ref{lem:Tcircle-conformal}. Hence $\tf_\eta\not\equiv0$, so the integral in \eqref{eq:degree-r} is strictly positive. Therefore $\degree^{r,t}([\omega])\to-\infty$ as $r\to+\infty$, and Proposition \ref{prop:degree-criterion} applies for all sufficiently large $r$. The final statement follows from Proposition \ref{prop:LCK-boundary}.
\end{proof}
Recall
that a Hermitian metric $\omega$ is called \emph{first-Gauduchon} if $\partial\bar\partial\omega\wedge\omega^{n-2}=0,$ while it is called \emph{strong K\"ahler with torsion} (SKT), or
\emph{pluriclosed}, if $\partial\bar\partial\omega=0.$
In particular, every SKT metric is first-Gauduchon; see
\cite{FinoUgarte2013}.
\begin{corollary}\label{cor:special-Hermitian}
Let $n\ge3$ and $t>0$. The conclusion of Theorem \ref{thm:large-r} holds if $\omega$ is any of the following:
\begin{enumerate}
\item balanced and non-K\"ahler;
\item first-Gauduchon and non-K\"ahler;
\item SKT and non-K\"ahler.
\end{enumerate}
\end{corollary}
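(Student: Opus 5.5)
By Theorem~\ref{thm:large-r} and Proposition~\ref{prop:LCK-boundary}, for $n\ge3$ it is enough to show that each of the three classes of metrics is not LCK, i.e.\ that $\tf\not\equiv0$. Once we know $\omega$ is not LCK, $\tf\not\equiv0$ on $[\omega]$ by conformal invariance (Lemma~\ref{lem:Tcircle-conformal}), and the large-$r$ existence follows directly. So I will argue by contradiction: assume $\omega$ is LCK, so $\dd\omega=\frac1{n-1}\theta\wedge\omega$ with $\dd\theta=0$ as in \eqref{eq:domega-LCK}. In each case the extra condition should then force $\theta=0$, hence $\dd\omega=0$, which contradicts non-K\"ahlerness.

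\emph{Balanced case.} Balanced means $\dd\omega^{n-1}=0$. By the definition \eqref{eq:lee} this gives $\theta\wedge\omega^{n-1}=0$. Wedging a $1$-form with $\omega^{n-1}$ is injective (Lefschetz), so $\theta=0$. Then $\dd\omega=0$ and $\omega$ is K\"ahler, a contradiction. This case uses neither $n\ge3$ nor the LCK structure beyond $\tf=0$.

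\emph{SKT case.} Since every SKT metric is first-Gauduchon, this case follows from the next one. As a check, it can also be done directly. From $\dd\omega=\frac1{n-1}\theta\wedge\omega$ and $\dd\theta=0$ we get $\dd\dd^c\omega$ expressed through $\dd^c\theta\wedge\omega$, $\theta\wedge J\theta\wedge\omega$-type terms and $\theta\wedge\dd^c\omega$. Equivalently, $2\ii\partial\bar\partial\omega=\frac{1}{(n-1)}\bigl(\dd J\theta\wedge\omega-\frac{1}{n-1}\theta\wedge J\theta\wedge\omega\bigr)$ up to normalization constants.

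\emph{First-Gauduchon case.} The condition is $\partial\bar\partial\omega\wedge\omega^{n-2}=0$. I will substitute the LCK expression for $\ii\partial\bar\partial\omega$ and wedge with $\omega^{n-2}$. This produces a pointwise identity of the form
\[
\bigl(c_1\,\dstar\theta+c_2\abs{\theta}^2\bigr)\omega^n=0 ,
\]
because the trace of $\dd J\theta$ is a multiple of $\dstar\theta$ plus terms quadratic in $\theta$, and the trace of $\theta\wedge J\theta$ is a multiple of $\abs{\theta}^2$. Integrating over $M$ kills the divergence term and leaves $c_2\int_M\abs{\theta}^2=0$. If $c_2\ne0$, this gives $\theta=0$ and again a contradiction.

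The main obstacle is the first-Gauduchon computation. One has to check that after taking traces the coefficient $c_2$ really is nonzero for $n\ge3$. The cross term $\theta\wedge\dd^c\omega\wedge\omega^{n-2}$ also contributes a multiple of $\abs\theta^2$, and these contributions must not cancel. I would carry out this computation in a unitary frame at a point using $\partial\omega=\frac1{n-1}\theta^{1,0}\wedge\omega$. If cancellation appeared, I would instead use the known fact (Ivanov--Papadopoulos, Fino--Ugarte) that a first-Gauduchon LCK metric with $n\ge3$ is K\"ahler, citing \cite{FinoUgarte2013}.
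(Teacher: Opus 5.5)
Your proposal is correct and is essentially the paper's proof. It uses the same reduction to ruling out LCK, the same balanced argument via $\theta\wedge\omega^{n-1}=0$, and the same reduction of SKT to first-Gauduchon. In the first-Gauduchon case it integrates a pointwise identity; the paper takes that identity from Fino--Ugarte as $\dstar\theta=\frac12\abs{T}^2-\abs{\theta}^2$ and combines it with $\abs{T}^2=\frac{2}{n-1}\abs{\theta}^2$ to get $\dstar\theta=-\frac{n-2}{n-1}\abs{\theta}^2$. So the ratio $c_2/c_1$ you left unchecked is $\frac{n-2}{n-1}$, which is nonzero exactly when $n\ge3$.

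Your direct computation reproduces this only if you flip the relative sign in the formula you display. The correct formula is $2\ii\partial\bar\partial\omega=\frac{1}{n-1}\bigl(\dd J\theta+\frac{1}{n-1}\theta\wedge J\theta\bigr)\wedge\omega$ in the paper's convention for $J$ on forms. With your minus sign you would get $c_2/c_1=\frac{n}{n-1}$, which would also (wrongly) rule out the standard Hopf surface for $n=2$, a metric that is LCK, pluriclosed and non-K\"ahler.
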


\begin{proof}
A balanced LCK metric has $\theta=0$, hence is K\"ahler, so case (i) is non-LCK. For a first-Gauduchon metric one has \cite[proof of Theorem 2.5]{FinoUgarte2013}
\[
\dd^*\theta=\frac12|T|^2-|\theta|^2.
\]
If it is also LCK, then $\tf=0$ by Proposition \ref{prop:LCK-boundary}, so \eqref{eq:orthogonal} gives $|T|^2=2|\theta|^2/(n-1)$. Integrating the displayed first-Gauduchon identity yields $\theta\equiv0$ for $n\ge3$, and hence the metric is K\"ahler. This proves (ii). Every SKT metric is first-Gauduchon, giving (iii).
\end{proof}

\subsection{A pointwise Aubin criterion}

The degree criterion is global. A local bubble test can give strict inequality for much smaller values of $r$. Define
\begin{equation}\label{eq:E}
E_{r,t}(\omega)
:=(2n-1)\mu^{r,t}(\omega)-t\scal(g).
\end{equation}

\begin{theorem}[Pointwise Aubin criterion]\label{thm:pointwise-Aubin}
Let $n\ge3$ and $t>0$. If there is $p\in M$ such that
\begin{equation}\label{eq:E-negative}
E_{r,t}(\omega)(p)<0,
\end{equation}
then $\lambda_{r,t}([\omega])<t\Lambda_n.$ In particular, $[\omega]$ contains a metric of constant $\mu^{r,t}$-curvature.
\end{theorem}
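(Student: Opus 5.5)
Divide the quotient by $a_t=\frac{2t}{n-1}$. Then $\lambda_{r,t}([\omega])/a_t$ is the critical Sobolev quotient $\lambda(H)$ of \eqref{eq:critical-h-functional}, with $m=2n$ and potential $H=\mu^{r,t}(\omega)/a_t$. By \eqref{eq:at-K} the desired strict inequality $\lambda_{r,t}<t\Lambda_n$ is the same as $\lambda(H)<K_{2n}^{-2}$, that is, $H$ is not weakly critical. The natural route is the contrapositive of Theorem~\ref{thm:collion-input}(i), which applies since $m=2n\geq6>4$ when $n\ge3$. If $H$ were weakly critical, then pointwise
\[
\frac{4(2n-1)}{2n-2}\,H\ge \scal(g).
\]
Substituting $H=\frac{(n-1)\mu^{r,t}}{2t}$, the coefficient becomes $\frac{4(2n-1)}{2(n-1)}\cdot\frac{n-1}{2t}=\frac{2n-1}{t}$. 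Multiplying by $t>0$, the inequality reads $(2n-1)\mu^{r,t}-t\scal(g)\ge0$ everywhere, i.e.\ $E_{r,t}(\omega)\ge0$ on $M$. The hypothesis $E_{r,t}(\omega)(p)<0$ contradicts this, so $\lambda(H)<K_{2n}^{-2}$. Theorem~\ref{thm:sharp-threshold} then supplies the smooth positive minimizer and hence the constant $\mu^{r,t}$-curvature metric.

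To be self-contained, and to check the normalization, I would also sketch the direct Aubin test-function computation behind Collion's Proposition~1. Take normal coordinates at $p$ and a cutoff times the bubble $u_\eps=(\eps/(\eps^2+|x|^2))^{n-1}$. In dimension $m\ge5$ the standard expansion of the conformal-Laplacian-type quotient gives
\[
\frac{\int(|\nabla u_\eps|^2+Hu_\eps^2)}{\|u_\eps\|_N^2}
=K_m^{-2}+c_m\,\eps^2\Bigl(H(p)-\tfrac{m-2}{4(m-1)}\scal(g)(p)\Bigr)+o(\eps^2),
\]
with $c_m>0$. The bracket is a positive multiple of $E_{r,t}(\omega)(p)$, so for small $\eps$ the quotient drops strictly below $K_m^{-2}$.

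The main obstacle is bookkeeping the constants: confirming that the expansion coefficient of the $L^2$ term against the scalar-curvature term is exactly $\frac{m-2}{4(m-1)}$ in the paper's conventions ($\Delta=\dstar\dd$, metric $g$, volume form $\omega^n/n!$), and that $m=2n\ge6$ keeps the $\eps^2$ term dominant. The case $m=4$ would involve $\eps^2\log(1/\eps)$ and is excluded by the hypothesis $n\ge3$. Everything else is a direct application of earlier results.
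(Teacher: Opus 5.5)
Your proposal is correct and is essentially the paper's own argument. The paper also divides by $a_t$, sets $m=2n$ and $H=\mu^{r,t}/a_t$, and invokes the standard Aubin bubble expansion at $p$ with threshold $4\frac{m-1}{m-2}H(p)<\scal(g)(p)$; this is exactly the content of Collion's Proposition~1 that you use in contrapositive form, with the universal bound of Theorem~\ref{thm:sharp-threshold} turning ``not weakly critical'' into the strict inequality, and your constant check $\frac{4(2n-1)}{2n-2}\cdot\frac{n-1}{2t}=\frac{2n-1}{t}$ matches the paper's reduction to $E_{r,t}(\omega)(p)<0$.
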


\begin{proof}
For an operator $\Delta+H$ on a real $m$-manifold, the standard Aubin test-function expansion gives strict inequality whenever
\[
4\frac{m-1}{m-2}H(p)<\scal(g)(p);
\]
see \cite{Aubin1998, Aubin1976Yamabe} and the adaptation in \cite{APSSW2026}. Here $m=2n$ and $H=\mu^{r,t}/a_t$. Since $a_t=2t/(n-1)$, the condition becomes
\[
\frac{2n-1}{t}\mu^{r,t}(p)<\scal(g)(p),
\]
which is exactly \eqref{eq:E-negative}.
\end{proof}

\begin{proposition}\label{prop:E-reduction}
For all $r,t$,
\begin{equation}\label{eq:E-reduction}
E_{r,t}
=E_{1,t}
-\frac{(2n-1)(n-1)}{2}(r-1)|\tf|^2,
\end{equation} where,
\begin{equation}\label{eq:E-Chern-mu0}
E_{1,t}=(2n-1-t)\mu^{1,0}.
\end{equation}
\end{proposition}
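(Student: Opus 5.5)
The proof has two parts, and both are direct substitutions into identities already established.

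\emph{Step 1: the $r$-dependence \eqref{eq:E-reduction}.} By definition \eqref{eq:E}, $E_{r,t}=(2n-1)\mu^{r,t}-t\,\scal(g)$. The term $t\,\scal(g)$ is Riemannian and does not depend on $r$. Subtracting the $r=1$ expression therefore gives
\[
E_{r,t}-E_{1,t}=(2n-1)\bigl(\mu^{r,t}-\mu^{1,t}\bigr).
\]
Proposition~\ref{prop:reduction} identifies $\mu^{r,t}-\mu^{1,t}$ with $-\frac{n-1}{2}(r-1)|\tf|^2$. Multiplying by $2n-1$ yields \eqref{eq:E-reduction}.

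\emph{Step 2: the Chern slice \eqref{eq:E-Chern-mu0}.} The key observation is that $\mu^{1,t}$ is affine in $t$. From \eqref{eq:Chern-slice},
\[
\mu^{1,t}=\mu^{1,0}+t\left(\frac{1}{n-1}\dd^*\theta+\frac14|T|^2\right)=:\mu^{1,0}+t\,\beta,
\]
where $\mu^{1,0}=\scal^{\Ch}-\frac{n}{n-1}\dd^*\theta-\frac{n}{2}|T|^2$.

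Next we express $\scal(g)$ in the same terms. By \eqref{eq:special-t}, $\scal(g)=\mu^{1,2n-1}=\mu^{1,0}+(2n-1)\beta$. As a cross-check, this should agree with \eqref{eq:riemann-chern}. Expanding,
\[
\mu^{1,0}+(2n-1)\beta=\scal^{\Ch}+\frac{-n+2n-1}{n-1}\dd^*\theta+\left(-\frac n2+\frac{2n-1}{4}\right)|T|^2=\scal^{\Ch}+\dd^*\theta-\frac14|T|^2,
\]
which is exactly \eqref{eq:riemann-chern}.

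Substituting both expressions into the definition of $E_{1,t}$:
\[
E_{1,t}=(2n-1)\bigl(\mu^{1,0}+t\beta\bigr)-t\bigl(\mu^{1,0}+(2n-1)\beta\bigr)=(2n-1-t)\mu^{1,0}.
\]
The $\beta$ terms cancel identically, which gives \eqref{eq:E-Chern-mu0}.

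\emph{Expected obstacle.} There is no conceptual difficulty. The only place requiring care is the coefficient bookkeeping in Step 2: checking the $\dd^*\theta$ and $|T|^2$ coefficients of $\mu^{1,0}$, and confirming that \eqref{eq:special-t} matches \eqref{eq:riemann-chern} under the norm convention \eqref{eq:norm-convention}. I would do this explicitly, as above, to rule out normalization slips.
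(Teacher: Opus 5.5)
Your proof is correct and follows the same route as the paper. You obtain \eqref{eq:E-reduction} directly from the exact reduction \eqref{eq:exact-reduction}, and \eqref{eq:E-Chern-mu0} from the affine dependence of $\mu^{1,t}$ on $t$ together with $\scal(g)=\mu^{1,2n-1}$; your explicit coefficient check against \eqref{eq:riemann-chern} is a useful consistency verification but is not needed for the argument.
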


\begin{proof}
Equation \eqref{eq:E-reduction} follows directly from \eqref{eq:exact-reduction}. For \eqref{eq:E-Chern-mu0}, use $\scal(g)=\mu^{1,2n-1}$ from \eqref{eq:special-t} and the fact that $\mu^{1,t}$ is affine in $t$ by \eqref{eq:Chern-slice}.
\end{proof}

\begin{corollary}[Optimizing in $r$]\label{cor:r-optimized}
Let $n\ge3$, $t>0$, and suppose $\tf(p)\ne0$. Then the pointwise Aubin condition holds at $p$ for every
\begin{equation}\label{eq:r-pointwise-threshold}
r>
1+
\frac{2E_{1,t}(p)}{(2n-1)(n-1)|\tf(p)|^2}.
\end{equation}
In particular, wherever the trace-free Chern torsion is nonzero, increasing $r$ eventually forces the strict Aubin inequality.
\end{corollary}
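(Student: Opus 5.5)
The corollary follows from Theorem~\ref{thm:pointwise-Aubin} combined with the reduction formula of Proposition~\ref{prop:E-reduction}; no new analysis is needed. Fix $n\ge3$, $t>0$, and a point $p$ with $\tf(p)\ne0$, so that $W(p)=|\tf(p)|^2>0$. By \eqref{eq:E-reduction} evaluated at $p$,
\[
E_{r,t}(\omega)(p)=E_{1,t}(p)-\frac{(2n-1)(n-1)}{2}(r-1)\,|\tf(p)|^2 .
\]
As a function of $r$, this is affine. Because $(2n-1)(n-1)|\tf(p)|^2/2>0$, it is strictly decreasing.

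The second step is to solve the inequality $E_{r,t}(\omega)(p)<0$ for $r$. Dividing by the positive coefficient $\frac{(2n-1)(n-1)}{2}|\tf(p)|^2$ shows that it is equivalent to
\[
r-1>\frac{2E_{1,t}(p)}{(2n-1)(n-1)|\tf(p)|^2},
\]
which is exactly \eqref{eq:r-pointwise-threshold}. If desired, one can substitute $E_{1,t}(p)=(2n-1-t)\mu^{1,0}(p)$ from \eqref{eq:E-Chern-mu0}, but this is not needed.

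For every such $r$ the hypothesis \eqref{eq:E-negative} holds at $p$. Theorem~\ref{thm:pointwise-Aubin} then gives $\lambda_{r,t}([\omega])<t\Lambda_n$, together with a metric of constant $\mu^{r,t}$-curvature in $[\omega]$. The final sentence of the corollary is immediate: the threshold in \eqref{eq:r-pointwise-threshold} is a finite real number whenever $\tf(p)\ne0$, so every sufficiently large $r$ exceeds it.

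There is no real obstacle. The only points requiring care are the sign of the coefficient of $(r-1)$, which comes from $n\ge3$ (so that $2n-1>0$ and $n-1>0$) and $\tf(p)\ne0$, and the restriction $n\ge3$ itself. That restriction is inherited from Theorem~\ref{thm:pointwise-Aubin}, whose local Aubin test-function expansion is valid in real dimension $m=2n\ge6$.
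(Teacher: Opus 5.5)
Your proposal is correct and is exactly the argument the paper leaves implicit: evaluate \eqref{eq:E-reduction} at $p$, divide by the positive coefficient $\frac{(2n-1)(n-1)}{2}|\tf(p)|^2$ to obtain \eqref{eq:r-pointwise-threshold}, and then invoke Theorem~\ref{thm:pointwise-Aubin}. One minor point: positivity of that coefficient needs only $n\ge2$ and $\tf(p)\ne0$, so the hypothesis $n\ge3$ enters solely through Theorem~\ref{thm:pointwise-Aubin}, not through the sign argument as your last paragraph partly suggests.
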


\subsection{LCK classes and K\"ahler starting metrics}\label{subsec:LCK-Kahler}
When \(\tf \equiv0\), the \(r\)-dependence disappears. We treat the resulting LCK case separately and the case of conformal classes containing a Kähler metric.

\begin{proposition}[The constant problem on an LCK class]\label{prop:LCK-constant}
Let $n\ge3$, let $t>0$, and suppose that $\omega$ is LCK. Then, for every $r\in\R$,
\begin{equation}\label{eq:LCK-r-independence}
\mu^{r,t}=\mu^{1,t},
\qquad
\energy_{r,t}=\energy_{1,t},
\qquad
\lambda_{r,t}([\omega])=\lambda_{1,t}([\omega]).
\end{equation}
Let $\eta$ be the unit-volume Gauduchon representative of the conformal class and put
\begin{equation}\label{eq:Gamma-Theta}
\Gamma([\omega])
:=\int_M\scal^{\Ch}(\eta)\,\mathrm dV_\eta,
\qquad
\Theta([\omega])
:=\int_M|\theta_\eta|^2\,\mathrm dV_\eta.
\end{equation}
Then
\begin{equation}\label{eq:LCK-degree}
\degree^{r,t}([\omega])
=\Gamma([\omega])
+\frac{t-2n}{2(n-1)}\Theta([\omega]),
\end{equation}
and
\begin{equation}\label{eq:LCK-E}
E_{r,t}=(2n-1-t)\mu^{1,0}.
\end{equation}
Consequently, a constant $\mu^{r,t}$-curvature metric exists if either
\begin{equation}\label{eq:LCK-degree-criterion}
\Gamma([\omega])
+\frac{t-2n}{2(n-1)}\Theta([\omega])
<t\Lambda_n,
\end{equation}
or there is a point $p\in M$ such that
\begin{equation}\label{eq:LCK-pointwise-criterion}
\bigl(t-(2n-1)\bigr)\mu^{1,0}(p)>0.
\end{equation}
If $\mu^{1,0}$ changes sign, or if $\mu^{1,0}\equiv0$, then the constant $\mu^{r,t}$-curvature problem is solvable for every $t>0$ and every $r\in\R$.
\end{proposition}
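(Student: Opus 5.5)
The plan is to reduce everything to results already established, using that $\tf\equiv0$ on an LCK class with $n\ge3$.

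\textbf{$r$-independence.} By Proposition~\ref{prop:LCK-boundary}, $\tf\equiv0$ for $\omega$. By Lemma~\ref{lem:Tcircle-conformal}, $\tf$ also vanishes for every metric in $[\omega]$, in particular for $\eta$. Proposition~\ref{prop:reduction} then gives $\mu^{r,t}=\mu^{1,t}$ pointwise for every representative. Since $\energy_{r,t}$ and $\lambda_{r,t}$ depend on $r$ only through this potential, the identities \eqref{eq:LCK-r-independence} follow at once.

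\textbf{Degree formula.} I would evaluate $\degree^{1,t}$ at $\eta$, using \eqref{eq:Chern-slice} with $\dd^*_\eta\theta_\eta=0$:
\[
\degree^{r,t}=\Gamma+\frac{t-2n}{4}\int_M|T_\eta|^2\,\mathrm dV_\eta .
\]
Now substitute $|T_\eta|^2=\frac{2}{n-1}|\theta_\eta|^2$ from \eqref{eq:orthogonal} with $\tf_\eta=0$. The coefficient becomes $\frac{t-2n}{2(n-1)}$, which gives \eqref{eq:LCK-degree}.

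\textbf{Formula for $E_{r,t}$.} This is immediate from Proposition~\ref{prop:E-reduction}, since the $|\tf|^2$ term vanishes.

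\textbf{Existence criteria.} The first criterion \eqref{eq:LCK-degree-criterion} is exactly Proposition~\ref{prop:degree-criterion}. For the second, condition \eqref{eq:LCK-pointwise-criterion} says $E_{r,t}(p)=(2n-1-t)\mu^{1,0}(p)<0$. Theorem~\ref{thm:pointwise-Aubin} then applies, since $n\ge3$, and Theorem~\ref{thm:sharp-threshold} yields the metric.

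\textbf{Final assertion, sign-changing case.} Suppose $\mu^{1,0}$ changes sign. For $t<2n-1$, pick a point where $\mu^{1,0}<0$; for $t>2n-1$, pick one where $\mu^{1,0}>0$. In both cases \eqref{eq:LCK-pointwise-criterion} holds.

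\textbf{Main obstacle: $t=2n-1$ and $\mu^{1,0}\equiv0$.} The pointwise criterion fails in the case $t=2n-1$, and also whenever $\mu^{1,0}\equiv0$.
\begin{itemize}
\item At $t=2n-1$, \eqref{eq:special-t} gives $\mu^{r,2n-1}=\mu^{1,2n-1}=\scal(g)$. The quotient then has coefficient $a_t=\frac{2(2n-1)}{n-1}=\frac{4(2n-1)}{2n-2}$, so it is exactly the Riemannian Yamabe functional of the underlying conformal class. The classical resolution of the Yamabe problem (Trudinger, Aubin, Schoen) then gives a constant scalar curvature metric. As noted in Section~\ref{sec:geometry}, this metric is still Hermitian, so it is the required constant-$\mu^{r,t}$ metric.
\item If $\mu^{1,0}\equiv0$, then for every $t$ the conformal law \eqref{eq:conformal-f} and the affine dependence of $\mu^{1,t}$ on $t$ give $\mu^{1,t}=\frac{t}{2n-1}\scal(g)$. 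The functional is then a positive multiple of the Yamabe functional, and the classical theorem again applies.
\end{itemize}
One point to check in the sign-changing case is that $\mu^{1,0}$ changing sign is a statement about the chosen $\omega$. By Remark~\ref{rem:t-zero} the pointwise sign of $\mu^{1,0}$ is conformally invariant, so the choice of representative does not matter.
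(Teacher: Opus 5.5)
Your proposal is correct and follows the paper's proof essentially step for step: $r$-independence from $\tf\equiv0$ and \eqref{eq:exact-reduction}, the degree formula via $\dd^*_\eta\theta_\eta=0$ and \eqref{eq:orthogonal}, $E_{r,t}$ from Proposition~\ref{prop:E-reduction}, the two criteria from Proposition~\ref{prop:degree-criterion} and Theorem~\ref{thm:pointwise-Aubin}, and the classical Yamabe theorem for $t=2n-1$ and for $\mu^{1,0}\equiv0$. One small citation point: in the $\mu^{1,0}\equiv0$ case, the identity $\mu^{1,t}=\frac{t}{2n-1}\scal(g)$ comes from the affine dependence in $t$ together with \eqref{eq:special-t}, not from the conformal law \eqref{eq:conformal-f}; the conformal law, via Remark~\ref{rem:t-zero}, only shows that the sign hypotheses on $\mu^{1,0}$ do not depend on the chosen representative.
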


\begin{proof}
Because the conformal class is LCK, Proposition~\ref{prop:LCK-boundary} gives $\tf\equiv0$. The exact reduction \eqref{eq:exact-reduction} therefore gives $\mu^{r,t}=\mu^{1,t}$ pointwise for every $r$. Since the gradient coefficient $a_t$ does not depend on $r$, the quadratic forms and their infima are also independent of $r$, proving \eqref{eq:LCK-r-independence}.

The metric $\eta$ is Gauduchon, hence $\dd^*\theta_\eta=0$. Moreover, on an LCK metric the trace-free torsion vanishes, and the orthogonal decomposition \eqref{eq:orthogonal} reduces to
\begin{equation}\label{eq:LCK-T-theta}
|T_\eta|^2=\frac{2}{n-1}|\theta_\eta|^2.
\end{equation}
Substituting these two facts into the Chern-slice formula \eqref{eq:Chern-slice} gives
\[
\mu^{1,t}(\eta)
=\scal^{\Ch}(\eta)
+\frac{t-2n}{2(n-1)}|\theta_\eta|^2.
\]
Integration over the unit-volume representative proves \eqref{eq:LCK-degree}. Formula \eqref{eq:LCK-E} follows from Proposition~\ref{prop:E-reduction}, because the same proposition gives $E_{1,t}=(2n-1-t)\mu^{1,0}$ and $E_{r,t}=E_{1,t}$ in the LCK case. The criteria \eqref{eq:LCK-degree-criterion} and \eqref{eq:LCK-pointwise-criterion} are therefore exactly Proposition~\ref{prop:degree-criterion} and Theorem~\ref{thm:pointwise-Aubin}, respectively.

Assume next that $\mu^{1,0}$ changes sign. If $t>2n-1$, choose $p$ with $\mu^{1,0}(p)>0$; then \eqref{eq:LCK-pointwise-criterion} holds. If $0<t<2n-1$, choose instead a point where $\mu^{1,0}<0$. When $t=2n-1$, one has $\mu^{r,2n-1}=\mu^{1,2n-1}=\scal(g)$ on the whole LCK class, so the equation is precisely the classical Yamabe equation and is solvable by the Yamabe theorem.

It remains to consider $\mu^{1,0}\equiv0$. Since $t\mapsto\mu^{1,t}$ is affine and $\mu^{1,2n-1}=\scal(g)$, we have pointwise on the LCK class
\begin{equation}\label{eq:LCK-affine-identity}
\mu^{r,t}=\mu^{1,t}
=\frac{t}{2n-1}\scal(g)
+\frac{2n-1-t}{2n-1}\mu^{1,0}
=\frac{t}{2n-1}\scal(g).
\end{equation}
The classical Yamabe gradient coefficient in real dimension $2n$ is
\[
a_Y=\frac{4(2n-1)}{2n-2}
=\frac{2(2n-1)}{n-1},
\]
and hence $a_t=(t/(2n-1))a_Y$. Therefore, for every nonzero $u\in H^1(M)$,
\[
\energy_{r,t}(u)
=\frac{t}{2n-1}
\int_M\left(a_Y|\nabla u|^2+\scal(g)u^2\right)\,\mathrm dV_g.
\]
Thus the deformed quotient is exactly $t/(2n-1)$ times the classical Yamabe quotient. A Yamabe minimizer consequently solves the constant $\mu^{r,t}$ equation for every $t>0$. This proves the final assertion.
\end{proof}

\begin{proposition}[Conformal classes containing a K\"ahler metric]\label{thm:Kahler-constant}
Let $n\ge3$, and suppose that $[\omega]$ contains a K\"ahler metric $\kappa$. After a constant rescaling assume $\Vol(M,\kappa)=1$, and write $S_\kappa:=\scal(g_\kappa)$. Then the following hold for every $r\in\R$.
\begin{enumerate}[label=\textup{(\roman*)}]
\item If $t=2n-1$, the constant $\mu^{r,t}$ problem is exactly the classical Yamabe problem and therefore has a solution.
\item If $t>2n-1$, the conformal class contains a metric of constant $\mu^{r,t}$-curvature.
\item If $0<t<2n-1$, a solution exists whenever
\[
\int_M S_\kappa\,\mathrm dV_\kappa\le0
\qquad\text{or}\qquad
S_\kappa(p)<0\text{ for some }p\in M.
\]
If $S_\kappa$ is constant, then $\kappa$ itself is a solution for every $t>0$.
\end{enumerate}
\end{proposition}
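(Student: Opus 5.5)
We work on the K\"ahler representative $\kappa$ of the class, since $\lambda_{r,t}$ and solvability are conformal invariants. The plan is to compute $\mu^{r,t}(\kappa)$ explicitly and then apply the sharp threshold, the degree criterion and the pointwise Aubin criterion, all evaluated at $\kappa$.

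\emph{Computation on $\kappa$.} Since $\kappa$ is K\"ahler, $T=0$, $\theta=0$ and $\tf=0$. By Lemma~\ref{lem:Gauduchon-identities} and \eqref{eq:Chern-slice}, together with \eqref{eq:riemann-chern}, this gives
\[
\mu^{r,t}(\kappa)=\mu^{1,t}(\kappa)=\scal^{\Ch}(\kappa)=S_\kappa
\]
for every $r$ and every $t$. In particular $\mu^{1,0}(\kappa)=S_\kappa$, and Proposition~\ref{prop:E-reduction} yields $E_{r,t}(\kappa)=(2n-1-t)S_\kappa$. A K\"ahler metric is LCK, so Proposition~\ref{prop:LCK-constant} also applies. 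Alternatively one can argue directly from Theorem~\ref{thm:pointwise-Aubin} in the metric $\kappa$ (we have $n\ge3$), together with Proposition~\ref{prop:degree-criterion}, using that $\kappa$ is itself the unit-volume Gauduchon representative. This gives $\degree^{r,t}=\int_M S_\kappa\,\mathrm dV_\kappa$.

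\emph{Case (i).} With $\mu^{r,t}(\kappa)=S_\kappa$, the identity \eqref{eq:LCK-affine-identity} gives
\[
\mu^{r,2n-1}=\scal(g)\quad\text{on the whole class}.
\]
Moreover $a_{2n-1}=a_Y$. Hence the quotient is exactly the Yamabe quotient, and the Yamabe theorem applies.

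\emph{Cases (ii) and (iii).} If $S_\kappa(p)<0$ for some $p$, then for $0<t<2n-1$ we get $E_{r,t}(\kappa)(p)<0$. Theorem~\ref{thm:pointwise-Aubin} then gives a solution. If $\int_M S_\kappa\le0$, the degree satisfies $\degree^{r,t}\le0<t\Lambda_n$, and Proposition~\ref{prop:degree-criterion} applies.

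For $t>2n-1$ we need $E_{r,t}(p)<0$, i.e.\ $S_\kappa(p)>0$ at some point; if such a point exists we are done. Otherwise $S_\kappa\le0$ everywhere, so $\int_M S_\kappa\le0$ and the degree criterion applies. This covers every possibility, including $S_\kappa\equiv0$. This case split is the main point; it is the analogue of the sign-change argument in Proposition~\ref{prop:LCK-constant}.

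\emph{Constant $S_\kappa$.} In this case $\mu^{r,t}(\kappa)=S_\kappa$ is constant, so $\kappa$ is already a solution of \eqref{eq:rt-DY} with $u\equiv1$ and $\lambda=S_\kappa$, for every $t>0$. This holds as a critical point, not necessarily a minimizer, and that is all the statement asks.

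\emph{Expected obstacle.} The only delicate step is confirming $\mu^{r,t}(\kappa)=S_\kappa$, i.e.\ $\scal^{\Ch}=\scal(g)$ on K\"ahler metrics. This follows from \eqref{eq:riemann-chern} with $\theta=T=0$.
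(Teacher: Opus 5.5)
Your proposal is correct and follows the paper's proof essentially step by step: compute $\mu^{r,t}(\kappa)=S_\kappa$, $\degree^{r,t}=\int_M S_\kappa\,\mathrm dV_\kappa$ and $E_{r,t}=(2n-1-t)S_\kappa$, then invoke the Yamabe theorem at $t=2n-1$, and the degree criterion or the pointwise Aubin criterion otherwise. The only cosmetic difference is that in case (ii) you split on whether $S_\kappa$ is positive somewhere rather than on the sign of $\int_M S_\kappa\,\mathrm dV_\kappa$; both splits cover the same cases.
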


\begin{proof}
Because $\kappa$ is K\"ahler, its Lee form and Chern torsion vanish: $\theta_\kappa=0,$ $T_\kappa=0.$
Consequently $\tf_\kappa=0$, so the entire conformal class is LCK and Proposition~\ref{prop:LCK-constant} shows that the problem is independent of $r$. Evaluating Definition~\ref{def:mu} on the K\"ahler representative gives, for every $r$ and $t$,
\begin{equation}\label{eq:Kahler-mu-start}
\mu^{r,t}(\kappa)=S_\kappa.
\end{equation}
Since a K\"ahler metric is Gauduchon and $\kappa$ has unit volume, uniqueness of the normalized Gauduchon representative gives
\begin{equation}\label{eq:Kahler-degree}
\degree^{r,t}([\omega])
=\int_M S_\kappa\,\mathrm dV_\kappa.
\end{equation}
Furthermore, Proposition~\ref{prop:E-reduction} and \eqref{eq:Kahler-mu-start} yield
\begin{equation}\label{eq:Kahler-E}
E_{r,t}
=(2n-1-t)S_\kappa.
\end{equation}

For $t=2n-1$, the LCK $r$-independence and \eqref{eq:special-t} imply
\[
\mu^{r,2n-1}=\scal(g)
\]
for every metric in the conformal class. Thus the equation is exactly the classical Yamabe equation, proving (i).

Assume now that $t>2n-1$. If
\[
\int_M S_\kappa\,\mathrm dV_\kappa\le0,
\]
then \eqref{eq:Kahler-degree} gives $\degree^{r,t}([\omega])\le0<t\Lambda_n,$ and Proposition~\ref{prop:degree-criterion} gives a solution. If instead the total scalar curvature is positive, then $S_\kappa(p)>0$ at some point $p$. Since $2n-1-t<0$, equation \eqref{eq:Kahler-E} gives $E_{r,t}(p)<0$, and Theorem~\ref{thm:pointwise-Aubin} again gives a solution. This proves (ii).

Finally let $0<t<2n-1$. If the total scalar curvature is non-positive, the same degree argument applies. If $S_\kappa(p)<0$ somewhere, then $2n-1-t>0$ and \eqref{eq:Kahler-E} gives $E_{r,t}(p)<0$, so the pointwise Aubin criterion applies. If $S_\kappa$ is constant, \eqref{eq:Kahler-mu-start} shows directly that the starting K\"ahler metric already has constant $\mu^{r,t}$-curvature. This proves (iii).
\end{proof}

\begin{corollary}[LCK classes without K\"ahler representative]\label{cor:strict-LCK}
Suppose that $n\ge3$ and that $[\omega]$ is LCK but contains no K\"ahler metric. Then its unit-volume Gauduchon representative $\eta$ is non-K\"ahler and
\[
\Theta([\omega])=\int_M|\theta_\eta|^2\,\mathrm dV_\eta>0.
\]
For every $r\in\R$ the constant problem is the same, and the sufficient criteria \eqref{eq:LCK-degree-criterion} and \eqref{eq:LCK-pointwise-criterion} apply. In particular, the problem is solvable for all $t>0$ if $\mu^{1,0}$ changes sign or vanishes identically, and it is always solvable at $t=2n-1$ by the classical Yamabe theorem.
\end{corollary}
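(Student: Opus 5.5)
The plan is to reduce everything to Proposition~\ref{prop:LCK-constant}, after first establishing the two preliminary facts about the Gauduchon representative $\eta$.

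\textbf{Step 1: $\eta$ is not K\"ahler.} The Gauduchon representative $\eta$ lies in $[\omega]$. If $\eta$ were K\"ahler, then $[\omega]$ would contain a K\"ahler metric, contradicting the hypothesis.

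\textbf{Step 2: $\Theta([\omega])>0$.} Since $[\omega]$ is LCK, Proposition~\ref{prop:LCK-boundary} together with the conformal invariance in Lemma~\ref{lem:Tcircle-conformal} gives $\tf_\eta\equiv0$. The argument in the proof of Proposition~\ref{prop:LCK-boundary} then yields
\[
\dd\eta=\frac1{n-1}\theta_\eta\wedge\eta .
\]
If $\Theta([\omega])=0$, then $\theta_\eta\equiv0$, so $\dd\eta=0$ and $\eta$ is K\"ahler, contradicting Step~1. Since $\theta_\eta$ is smooth and $\Theta$ is the integral of $|\theta_\eta|^2$, we conclude $\Theta>0$.

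\textbf{Step 3: the remaining assertions.} These are immediate from Proposition~\ref{prop:LCK-constant}, which holds for every LCK class with $n\ge3$:
\begin{enumerate}
\item $r$-independence is \eqref{eq:LCK-r-independence};
\item the two sufficient criteria \eqref{eq:LCK-degree-criterion} and \eqref{eq:LCK-pointwise-criterion} are stated there;
\item solvability for all $t>0$ when $\mu^{1,0}$ changes sign or vanishes identically is its final assertion;
\item at $t=2n-1$, the identity $\mu^{r,2n-1}=\mu^{1,2n-1}=\scal(g)$ from \eqref{eq:special-t} shows that the equation is the classical Yamabe equation, which is solvable by the Yamabe theorem.
\end{enumerate}

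\textbf{Main obstacle.} There is no real obstacle. The only point needing care is Step~2: passing from the LCK structure to $\dd\eta=\theta_\eta\wedge\eta/(n-1)$ on the specific representative $\eta$. This requires noting that $\tf$ vanishes on every metric of the class, which is exactly the conformal invariance in Lemma~\ref{lem:Tcircle-conformal}, before applying the computation in Proposition~\ref{prop:LCK-boundary}.
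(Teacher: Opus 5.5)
Your proposal is correct and follows essentially the same route as the paper. The paper also argues that $\theta_\eta\equiv0$, combined with the LCK identity $\dd\eta=\frac1{n-1}\theta_\eta\wedge\eta$, would make $\eta$ K\"ahler, contradicting the hypothesis, and it then reads off all remaining assertions from Proposition~\ref{prop:LCK-constant}. Your two extra remarks make explicit steps the paper leaves implicit: that the conformal invariance of $\tf$ gives the LCK identity on the specific representative $\eta$, and that smoothness lets you pass from $\Theta=0$ to $\theta_\eta\equiv0$.
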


\begin{proof}
If $\theta_\eta\equiv0$, then the LCK identity \eqref{eq:domega-LCK} gives $\dd\eta=0$, so $\eta$ is K\"ahler, contrary to the hypothesis. Hence $\Theta([\omega])>0$. All remaining assertions are precisely Proposition~\ref{prop:LCK-constant}.
\end{proof}

\begin{remark}\label{rem:LCK-nonKahler-distinction}
An LCK metric may itself be non-K\"ahler while being globally conformal to a K\"ahler metric. In that situation, its conformal class falls under Proposition~\ref{thm:Kahler-constant}. Corollary~\ref{cor:strict-LCK} concerns the case in which the conformal class contains no K\"ahler representative.
\end{remark}

\section{Global dependence on the Gauduchon parameter}\label{sec:rdependence}

Fix $t>0.$ 
%and abbreviate
%\begin{equation}\label{eq:W-c}  W:=|\tf|^2, \qquad c_n:=\frac{n-1}{2}.\end{equation}
The exact reduction \eqref{eq:exact-reduction} gives, for every $r,s\in\R$,
\begin{equation}\label{eq:Q-affine}
 \energy_{r,t}(u)
 =\energy_{s,t}(u)-c_n(r-s)\int_MWu^2dV_g.
\end{equation}
\subsection{A conformal torsion mass}
\begin{definition}\label{def:Tmass}
Define
\begin{equation}\label{eq:Tmass}
 \Tmass([\omega])
 :=\|W\|_{L^n(M,g)}
 =\left(\int_M|\tf|^{2n}dV_g\right)^{1/n}.
\end{equation}
\end{definition}
The quantity $\Tmass([\omega])$ is independent of the representative $\omega\in[\omega]$. Lemma \ref{lem:holder-sharp} below gives the optimal bound for the $L^2$ term in \eqref{eq:Q-affine}.

\begin{lemma}\label{lem:holder-sharp}
For every $u\in H^1(M)$,
\begin{equation}\label{eq:holder-weight}
 \int_MWu^2dV_g\le\Tmass([\omega])D(u).
\end{equation}
Moreover,
\begin{equation}\label{eq:holder-sup}
 \sup_{u\ne0}
 \frac{\int_MWu^2dV_g}{D(u)}
 =\Tmass([\omega]).
\end{equation}
\end{lemma}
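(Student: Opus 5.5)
The inequality \eqref{eq:holder-weight} is Hölder's inequality with the conjugate pair $(n, n/(n-1))$. Since $N/2 = n/(n-1)$, we have
\[
\int_M W u^2\,dV_g \le \|W\|_{L^n}\,\bigl\|u^2\bigr\|_{L^{n/(n-1)}} = \Tmass([\omega])\left(\int_M |u|^N dV_g\right)^{2/N} = \Tmass([\omega])\,D(u).
\]
This is valid for every $u\in H^1(M)$, because Sobolev embedding gives $u\in L^N$ and $W$ is smooth, hence bounded. Taking the supremum over $u\neq0$ then gives the inequality ``$\le$'' in \eqref{eq:holder-sup}.

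For the reverse inequality, I would use the equality case of Hölder. Equality holds when $u^2$ is proportional to $W^{n-1}$, i.e. when $|u|^N$ is proportional to $W^n$. So the natural candidate is $u_0 := W^{(n-1)/2} = |\tf|^{n-1}$. A direct computation gives
\[
\int_M W u_0^2 = \int_M W^n, \qquad D(u_0) = \left(\int_M W^n\right)^{(n-1)/n}.
\]
Their quotient is exactly $\|W\|_{L^n} = \Tmass([\omega])$. If $W\equiv0$, both sides of \eqref{eq:holder-sup} vanish and there is nothing to prove, so I assume $W\not\equiv0$.

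The only obstacle is regularity. The function $u_0 = |\tf|^{n-1}$ is continuous, but it may fail to be $H^1$ near the zero set of $\tf$ when $n-1<1$. Since $n\ge2$, the exponent satisfies $n-1\ge1$, so $u_0$ is Lipschitz and therefore lies in $H^1$. If one prefers to avoid this point entirely, use smooth approximants $u_\epsilon := (W+\epsilon)^{(n-1)/2}$. As $\epsilon\to0$, dominated convergence gives convergence of both the numerator and $D(u_\epsilon)$ to the values computed for $u_0$. Hence the supremum is at least $\Tmass([\omega])$, and is attained by $u_0$ whenever $u_0\in H^1$.

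Finally, the conformal invariance of $\Tmass([\omega])$ follows from Lemma \ref{lem:Tcircle-conformal}. Under $\widehat\omega = e^f\omega$ we have $W\mapsto e^{-f}W$, while $dV\mapsto e^{nf}dV$ in real dimension $2n$. Thus $W^n\,dV$ is unchanged, and so is its $n$-th root.
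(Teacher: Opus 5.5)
Your proposal is correct and follows the paper's proof: Hölder's inequality with exponents $n$ and $n/(n-1)$ gives the upper bound, and the Hölder-equality candidate $u=W^{(n-1)/2}$ gives the reverse inequality. The one difference is in the regularity step. The paper approximates $W^{(n-1)/2}$ in $L^N$ by smooth functions, whereas you note that it is already Lipschitz, hence in $H^1$ (or you use the smooth approximants $(W+\varepsilon)^{(n-1)/2}$); this also shows that the supremum is attained.
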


\begin{proof}
H\"older with conjugate exponents $n$ and $n/(n-1)$ gives
\begin{align*}
 \int_MWu^2dV_g
 &\le\left(\int_MW^n dV_g\right)^{1/n}
 \left(\int_M|u|^{2n/(n-1)}dV_g\right)^{(n-1)/n}\\
 &=\Tmass([\omega])D(u).
\end{align*}
This proves the upper bound in \eqref{eq:holder-sup}.

If $W\not\equiv0$, equality in H\"older is formally obtained from $u^2=W^{n-1},\qquad u=W^{(n-1)/2}.$ This function belongs to $L^N$ because $W^n\in L^1$.  Approximate it in $L^N$ by smooth functions $u_j$.  Since $W\in L^n$, H\"older also shows
\[
 \int_MW(u_j^2-u^2)dV_g\longrightarrow0,
\]
while $D(u_j)\to D(u)$.  Hence the quotient tends to $\|W\|_{L^n}$.  If $W\equiv0$, both sides of \eqref{eq:holder-sup} are zero.
\end{proof}

\subsection{Concavity, Lipschitz continuity, and asymptotic slope}

\begin{theorem}\label{thm:r-global}
For fixed $t>0$, the function
\[
 r\longmapsto\lambda_{r,t}([\omega])
\]
is finite on $\R$, non-increasing, concave, and globally Lipschitz.  More precisely,
\begin{equation}\label{eq:Lipschitz}
 |\lambda_{r,t}-\lambda_{s,t}|
 \le c_n\Tmass([\omega])|r-s|.
\end{equation}
Furthermore,
\begin{equation}\label{eq:asymptotic-plus}
 \lim_{r\to+\infty}\frac{\lambda_{r,t}}r
 =-c_n\Tmass([\omega]),
\end{equation}
and
\begin{equation}\label{eq:asymptotic-minus}
 \lim_{r\to-\infty}\frac{\lambda_{r,t}}r=0.
\end{equation}
\end{theorem}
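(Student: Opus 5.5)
The plan is to read off every assertion from the affine identity \eqref{eq:Q-affine}, which shows that $\lambda_{r,t}$ is an infimum of functions that are affine in $r$, combined with the sharp H\"older bound of Lemma~\ref{lem:holder-sharp}.

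For each fixed $u\in H^1(M)\setminus\{0\}$, \eqref{eq:Q-affine} with $s=1$ gives
\[
\frac{\energy_{r,t}(u)}{D(u)}=\frac{\energy_{1,t}(u)}{D(u)}-c_n(r-1)\frac{\int_MWu^2\,dV_g}{D(u)}.
\]
This is an affine function of $r$ with slope $-c_n\int Wu^2/D(u)\le0$, since $W=|\tf|^2\ge0$. So $\lambda_{r,t}$ is a pointwise infimum of nonincreasing affine functions of $r$, and is therefore concave and nonincreasing once we know it is finite.

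\emph{Finiteness.} The upper bound $\lambda_{r,t}\le t\Lambda_n$ is Theorem~\ref{thm:sharp-threshold}. For the lower bound, the potential $\mu^{r,t}$ is smooth, so by H\"older (as in Lemma~\ref{lem:coercivity})
\[
\energy_{r,t}(u)\ge-\|(\mu^{r,t})^-\|_\infty\|u\|_2^2\ge-\|(\mu^{r,t})^-\|_\infty\Vol(M)^{1/n}D(u).
\]
This also follows from Proposition~\ref{prop:minimizer}.

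\emph{Lipschitz bound.} Take $r>s$. Monotonicity gives $\lambda_{r,t}\le\lambda_{s,t}$. For the reverse inequality, \eqref{eq:Q-affine} and \eqref{eq:holder-weight} give
\[
\energy_{r,t}(u)\ge\energy_{s,t}(u)-c_n(r-s)\Tmass([\omega])D(u)\ge\bigl(\lambda_{s,t}-c_n(r-s)\Tmass([\omega])\bigr)D(u).
\]
Taking the infimum over $u$ yields \eqref{eq:Lipschitz}. The constant is sharp because, by the slope limit below, no smaller constant is compatible with \eqref{eq:asymptotic-plus}.

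\emph{Slope at $+\infty$.} The lower bound $\lambda_{r,t}\ge\lambda_{1,t}-c_n(r-1)\Tmass$ from the Lipschitz step gives $\liminf_{r\to+\infty}\lambda_{r,t}/r\ge-c_n\Tmass$. For the upper bound, fix $u$ and use $\lambda_{r,t}\le\energy_{r,t}(u)/D(u)$. Dividing the affine expression above by $r$ and letting $r\to+\infty$ gives
\[
\limsup_{r\to+\infty}\frac{\lambda_{r,t}}{r}\le-c_n\frac{\int_MWu^2\,dV_g}{D(u)}.
\]
Taking the supremum over $u$ and invoking \eqref{eq:holder-sup} yields $\limsup\le-c_n\Tmass$, which proves \eqref{eq:asymptotic-plus}.

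\emph{Slope at $-\infty$.} For $r\le1$, monotonicity and Theorem~\ref{thm:sharp-threshold} give $\lambda_{1,t}\le\lambda_{r,t}\le t\Lambda_n$. So $\lambda_{r,t}$ is bounded as $r\to-\infty$, and dividing by $r$ gives \eqref{eq:asymptotic-minus}.

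The only delicate point is the upper half of the $+\infty$ slope, where one needs the supremum in \eqref{eq:holder-sup} to be approached by admissible $H^1$ test functions. Lemma~\ref{lem:holder-sharp} already supplies this through smooth approximants of $W^{(n-1)/2}$, and the limit in $r$ is taken before the supremum over $u$, so no uniformity issue arises. Everything else is formal once the affine structure \eqref{eq:Q-affine} is in hand.
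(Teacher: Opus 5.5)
Your proposal is correct and follows essentially the same route as the paper: the affine identity \eqref{eq:Q-affine} gives monotonicity, concavity and the Lipschitz bound via Lemma~\ref{lem:holder-sharp}. The slopes then come from the sharp H\"older supremum \eqref{eq:holder-sup} and the bound $\lambda_{1,t}\le\lambda_{r,t}\le t\Lambda_n$ for $r\le 1$. The only differences are cosmetic: you read the $\liminf$ at $+\infty$ off the Lipschitz estimate and take the $r$-limit before the supremum over $u$ rather than fixing an $\varepsilon$-approximant, and you add a justification of the sharpness of the Lipschitz constant via the asymptotic slope.
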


\begin{proof}
\textit{Step 1: finiteness.}
Let $\mu_1=\mu^{1,t}(\omega)$.  Since $\mu_1$ and $W$ are smooth on the compact manifold, for each fixed $r$ the potential $\mu^{r,t}=\mu_1-c_n(r-1)W$ is bounded below.  By H\"older,
\[
 \int_Mu^2dV_g\le\Vol(M,g)^{1/n}D(u).
\]
Therefore
\[
 \frac{\energy_{r,t}(u)}{D(u)}
 \ge-\|\mu^{r,t,-}\|_\infty\Vol(M,g)^{1/n},
\]
so the infimum is finite.  The upper bound \eqref{eq:sharp-upper} gives finiteness from above.

\textit{Step 2: monotonicity and Lipschitz continuity.}
For $u\ne0$, put
\[
 F_r(u):=\frac{\energy_{r,t}(u)}{D(u)},
 \qquad
 b(u):=\frac{\int_MWu^2dV_g}{D(u)}.
\]
By \eqref{eq:Q-affine},
\begin{equation}\label{eq:F-affine}
 F_r(u)=F_s(u)-c_n(r-s)b(u).
\end{equation}
Since $b(u)\ge0$, each $F_r(u)$ is non-increasing in $r$, and therefore so is its infimum $\lambda_{r,t}$.  By Lemma \ref{lem:holder-sharp}, $0\le b(u)\le\Tmass$.  Hence
\[
 |F_r(u)-F_s(u)|\le c_n\Tmass|r-s|
\]
uniformly in $u$.  Taking infima first in one direction and then in the other gives \eqref{eq:Lipschitz}.

\textit{Step 3: concavity.}
Let $r=\vartheta r_1+(1-\vartheta)r_2$ with $0\le\vartheta\le1$.  Since $F_r(u)$ is affine in $r$,
\[
 F_r(u)=\vartheta F_{r_1}(u)+(1-\vartheta)F_{r_2}(u).
\]
Thus
\begin{align*}
 \lambda_{r,t}
 &=\inf_u\left[\vartheta F_{r_1}(u)+(1-\vartheta)F_{r_2}(u)\right]\\
 &\ge\vartheta\inf_uF_{r_1}(u)
 +(1-\vartheta)\inf_uF_{r_2}(u)\\
 &=\vartheta\lambda_{r_1,t}+(1-\vartheta)\lambda_{r_2,t}.
\end{align*}
Therefore $r\mapsto\lambda_{r,t}$ is concave.

\textit{Step 4: the slope as $r\to+\infty$.}
There exists $C>0$ such that
\begin{equation}\label{eq:F1-lower}
 F_1(u)\ge-C
\end{equation}
for every $u\ne0$, because
\[
 \energy_{1,t}(u)
 \ge-\|\mu_1^-\|_\infty\int_Mu^2dV_g
 \ge-\|\mu_1^-\|_\infty\Vol(M,g)^{1/n}D(u).
\]
Using \eqref{eq:F-affine} with $s=1$ and $b(u)\le\Tmass$,
\[
 F_r(u)
 \ge-C-c_n(r-1)\Tmass.
\]
Taking infima and dividing by $r>0$ gives
\[
 \liminf_{r\to+\infty}\frac{\lambda_{r,t}}r
 \ge-c_n\Tmass.
\]

Conversely, fix $\eps>0$.  By \eqref{eq:holder-sup}, choose a smooth nonzero $u_\eps$ such that
\[
 b(u_\eps)>\Tmass-\eps.
\]
Then
\[
 \lambda_{r,t}
 \le F_r(u_\eps)
 =F_1(u_\eps)-c_n(r-1)b(u_\eps).
\]
Divide by $r$ and let $r\to+\infty$:
\[
 \limsup_{r\to+\infty}\frac{\lambda_{r,t}}r
 \le-c_n(\Tmass-\eps).
\]
Letting $\eps\downarrow0$ proves \eqref{eq:asymptotic-plus}.

\textit{Step 5: the slope as $r\to-\infty$.}
If $r\le1$, then \eqref{eq:Q-affine} gives
\[
 \energy_{r,t}(u)=\energy_{1,t}(u)+c_n(1-r)\int_MWu^2dV_g\ge \energy_{1,t}(u).
\]
Hence
\[
 \lambda_{1,t}\le\lambda_{r,t}\le t\Lambda_n
 \qquad(r\le1).
\]
Thus $\lambda_{r,t}$ remains bounded as $r\to-\infty$, and division by $r$ proves \eqref{eq:asymptotic-minus}.
\end{proof}

\begin{corollary}[A variational characterization of the LCK case]\label{cor:lck-slope}
Assume $n\ge3$.  Then for every $t>0$,
\begin{equation}\label{eq:lck-slope}
 \omega\text{ is LCK}
 \quad\Longleftrightarrow\quad
 \lim_{r\to+\infty}\frac{\lambda_{r,t}([\omega])}{r}=0.
\end{equation}
\end{corollary}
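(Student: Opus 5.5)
The plan is to combine the asymptotic slope formula \eqref{eq:asymptotic-plus} of Theorem~\ref{thm:r-global} with the LCK boundary characterization of Proposition~\ref{prop:LCK-boundary}. By Theorem~\ref{thm:r-global}, for every fixed $t>0$,
\[
 \lim_{r\to+\infty}\frac{\lambda_{r,t}([\omega])}{r}=-c_n\Tmass([\omega]),\qquad c_n=\frac{n-1}{2}>0 .
\]
So the limit in \eqref{eq:lck-slope} always exists, and it vanishes if and only if $\Tmass([\omega])=0$. The corollary therefore reduces to the equivalence
\[
 \Tmass([\omega])=0 \iff \omega \text{ is LCK}.
\]

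For this equivalence I will use Definition~\ref{def:Tmass}: $\Tmass([\omega])=\|W\|_{L^n(M,g)}$ with $W=|\tf|^2$. Since $W$ is smooth and nonnegative, its $L^n$ norm vanishes exactly when $W\equiv0$, that is, when $\tf\equiv0$ on $M$. Proposition~\ref{prop:LCK-boundary} requires $n\ge3$, which is assumed, and gives $\tf\equiv0$ if and only if $\omega$ is LCK.

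It remains to check that both sides are properties of the conformal class, so the statement does not depend on the representative.
\begin{itemize}
\item $\Tmass$ is conformally invariant by Lemma~\ref{lem:Tcircle-conformal}. The weight $-1$ of $|\tf|^2$ combines with the volume weight $e^{nf}$ to make $|\tf|^{2n}\,dV_g$ invariant.
\item The LCK property is itself conformally invariant.
\end{itemize}

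None of the steps is a real obstacle. The only thing to watch is that the sign of $c_n$ is strictly positive, so a zero slope forces $\Tmass=0$ exactly.
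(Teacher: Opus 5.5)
Your proposal is correct and matches the paper's proof. The slope formula of Theorem~\ref{thm:r-global} with $c_n>0$ reduces the vanishing of the limit to $\Tmass([\omega])=0$, i.e.\ $\tf\equiv0$, and Proposition~\ref{prop:LCK-boundary} (for $n\ge3$) turns this into the LCK condition; your extra conformal-invariance check is harmless but not needed, since everything is computed for the same representative $\omega$.
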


\begin{proof}
By Theorem \ref{thm:r-global}, the limit vanishes if and only if $\Tmass([\omega])=0$, equivalently $\tf\equiv0$.  The conclusion follows from Proposition \ref{prop:LCK-boundary}.
\end{proof}

\begin{remark}\label{rem:ae-differentiable}
Since a concave function is locally absolutely continuous and differentiable almost everywhere, $\lambda_{r,t}$ has a derivative for almost every $r$.  The theorem identifies its limiting slope at $+\infty$ without requiring differentiability at any finite parameter.
\end{remark}

\section{Critical thresholds and Attainment}\label{sec:saturation}

For fixed $t>0$, define the set
\begin{equation}\label{eq:saturation-set}
 \mathscr S_t([\omega])
 :=\{r\in\R:\lambda_{r,t}([\omega])=t\Lambda_n\}.
\end{equation}

\begin{theorem}\label{thm:saturation-halfline}
Assume $W=|\tf|^2\not\equiv0$ and $\mathscr S_t([\omega])\ne\varnothing$.  Then there exists a unique finite number
\[
 r_\sharp=r_\sharp(t,[\omega])
\]
such that
\begin{equation}\label{eq:saturation-halfline}
 \mathscr S_t([\omega])=(-\infty,r_\sharp].
\end{equation}
If $r_*\in\mathscr S_t([\omega])$ is arbitrary and
\begin{equation}\label{eq:defect-delta}
 \delta_*
 :=\inf_{\A(u)>0}
 \frac{\energy_{r_*,t}(u)-t\Lambda_nD(u)}{\A(u)},
 \qquad
 \A(u):=\int_MWu^2dV_g,
\end{equation}
then $\delta_*\in[0,\infty)$ and
\begin{equation}\label{eq:rsharp-formula}
 r_\sharp=r_*+\frac{\delta_*}{c_n}
 =r_*+\frac{2\delta_*}{n-1}.
\end{equation}
Moreover, for every $r<r_\sharp$, the critical infimum $\lambda_{r,t}=t\Lambda_n$ is not attained.
\end{theorem}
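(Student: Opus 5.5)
The plan is to use the affine identity \eqref{eq:Q-affine} together with the monotonicity from Theorem~\ref{thm:r-global}, and then characterize the endpoint through the defect quotient $\delta_*$.

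\emph{Structure of the set.} Since $r\mapsto\lambda_{r,t}$ is non-increasing and bounded above by $t\Lambda_n$ (Theorem~\ref{thm:sharp-threshold}), membership $r_*\in\mathscr S_t$ forces $\lambda_{r,t}=t\Lambda_n$ for all $r\le r_*$. Hence $\mathscr S_t$ is a down-closed half-line. It is closed because $\lambda_{r,t}$ is Lipschitz. It is bounded above because, by \eqref{eq:asymptotic-plus} with $\Tmass>0$ (recall $W\not\equiv0$), $\lambda_{r,t}\to-\infty$. So $\mathscr S_t=(-\infty,r_\sharp]$ for a unique finite $r_\sharp$.

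\emph{Formula for $r_\sharp$.} Put $G(u):=\energy_{r_*,t}(u)-t\Lambda_nD(u)$, which is $\ge0$ for all $u$ since $r_*\in\mathscr S_t$. Then $\delta_*\ge0$. It is finite: choose any $u$ with $\A(u)>0$, which exists since $W\not\equiv0$. By \eqref{eq:Q-affine},
\[
\energy_{r,t}(u)-t\Lambda_nD(u)=G(u)-c_n(r-r_*)\A(u).
\]
Now split on $r$ relative to $r_*+\delta_*/c_n$.
\begin{itemize}
\item If $r\le r_*+\delta_*/c_n$, then for $\A(u)>0$ the right side is $\ge(\delta_*-c_n(r-r_*))\A(u)\ge0$. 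For $\A(u)=0$ it equals $G(u)\ge0$, using also $r\ge r_*$; for $r<r_*$ we already know the claim. Hence $\lambda_{r,t}\ge t\Lambda_n$, and equality holds by the upper bound.
\item If $r>r_*+\delta_*/c_n$, pick $u$ with $G(u)/\A(u)<c_n(r-r_*)$. This gives $\energy_{r,t}(u)<t\Lambda_nD(u)$, so $r\notin\mathscr S_t$.
\end{itemize}
This proves \eqref{eq:rsharp-formula}.

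\emph{Non-attainment for $r<r_\sharp$.} Suppose $u\ne0$ attains $\lambda_{r,t}=t\Lambda_n$ with $r<r_\sharp$. Since $|u|$ is also a minimizer, elliptic regularity and the strong maximum principle applied to the Euler--Lagrange equation \eqref{eq:rt-DY} let us take $u$ smooth and positive. Then $\A(u)=\int Wu^2>0$, because $W\not\equiv0$ is continuous and $u>0$. Pick $r'\in(r,r_\sharp]$. By \eqref{eq:Q-affine},
\[
\energy_{r',t}(u)=t\Lambda_nD(u)-c_n(r'-r)\A(u)<t\Lambda_nD(u),
\]
so $\lambda_{r',t}<t\Lambda_n$, contradicting $r'\in\mathscr S_t$.

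The main obstacle is this last step, specifically justifying positivity (or at least $\A(u)>0$) of a minimizer. It is handled by the standard regularity and maximum-principle argument. Alternatively, a minimizer with $\A(u)=0$ would vanish on the open set $\{W>0\}$, which contradicts unique continuation for the Euler--Lagrange equation.
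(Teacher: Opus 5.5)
Your proposal is correct and follows essentially the same route as the paper: downward closure plus boundedness from the asymptotic slope, the two-sided comparison of $r$ with $r_*+\delta_*/c_n$ via \eqref{eq:Q-affine}, and non-attainment by taking a positive minimizer (so $\A(u)>0$) and testing it at a larger saturated parameter. The differences are cosmetic. You cite monotonicity from Theorem~\ref{thm:r-global} instead of re-deriving it from \eqref{eq:Q-affine}. Your appeal to $r\ge r_*$ in the case $\A(u)=0$ is unnecessary, since the correction term $c_n(r-r_*)\A(u)$ then vanishes for every $r$.
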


\begin{proof}
\textit{Step 1: downward closure of the equality set.}
Take $r_*\in\mathscr S_t([\omega])$.  Then
\begin{equation}\label{eq:defect-nonnegative}
 \energy_{r_*,t}(u)-t\Lambda_nD(u)\ge0
\end{equation}
for every $u\in H^1(M)$.  If $r<r_*$, formula \eqref{eq:Q-affine} gives
\[
 \energy_{r,t}(u)
 =\energy_{r_*,t}(u)+c_n(r_*-r)\A(u)
 \ge t\Lambda_nD(u).
\]
Thus $\lambda_{r,t}\ge t\Lambda_n$.  The universal bound \eqref{eq:sharp-upper} yields the reverse inequality, so
\[
 \lambda_{r,t}=t\Lambda_n.
\]
Therefore $\mathscr S_t([\omega])$ is downward closed.

\textit{Step 2: finiteness of the upper endpoint.}
Since $W\not\equiv0$, Theorem \ref{thm:r-global} gives
\[
 \frac{\lambda_{r,t}}r\longrightarrow-c_n\Tmass([\omega])<0
 \qquad(r\to+\infty).
\]
Hence $\lambda_{r,t}\to-\infty$ linearly as $r\to+\infty$.  In particular, equality with the positive number $t\Lambda_n$ cannot persist for arbitrarily large $r$.  Thus
\[
 r_\sharp:=\sup\mathscr S_t([\omega])<+\infty.
\]
Continuity of $r\mapsto\lambda_{r,t}$, supplied by the Lipschitz estimate \eqref{eq:Lipschitz}, gives
\[
 \lambda_{r_\sharp,t}=t\Lambda_n.
\]
Together with downward closure, this proves \eqref{eq:saturation-halfline}.

\textit{Step 3: the variational formula for $r_\sharp$.}
Because $r_*\in\mathscr S_t$, the numerator in \eqref{eq:defect-delta} is nonnegative; hence $\delta_*\ge0$.  Since $W\not\equiv0$, there is a smooth $u$ with $\A(u)>0$, so $\delta_*<+\infty$.

For arbitrary $r$,
\begin{equation}\label{eq:defect-shift}
 \energy_{r,t}(u)-t\Lambda_nD(u)
 =\energy_{r_*,t}(u)-t\Lambda_nD(u)-c_n(r-r_*)\A(u).
\end{equation}
If $\A(u)>0$, the definition of $\delta_*$ yields
\[
 \energy_{r_*,t}(u)-t\Lambda_nD(u)\ge\delta_*\A(u).
\]
If $\A(u)=0$, the left-hand side is nonnegative by \eqref{eq:defect-nonnegative}.
Therefore, whenever
\[
 r\le r_*+\frac{\delta_*}{c_n},
\]
formula \eqref{eq:defect-shift} implies
\[
 \energy_{r,t}(u)\ge t\Lambda_nD(u)
\]
for all $u$.  Hence such $r$ are saturated.

Conversely, suppose
\[
 r>r_*+\frac{\delta_*}{c_n}.
\]
Then $c_n(r-r_*)>\delta_*$.  By the definition of the infimum, there exists $u$ with $\A(u)>0$ such that
\[
 \frac{\energy_{r_*,t}(u)-t\Lambda_nD(u)}{\A(u)}
 <c_n(r-r_*).
\]
Equation \eqref{eq:defect-shift} gives
\[
 \energy_{r,t}(u)<t\Lambda_nD(u),
\]
so $\lambda_{r,t}<t\Lambda_n$.  This proves \eqref{eq:rsharp-formula}.

\textit{Step 4: nonattainment in the interior.}
Fix $r<r_\sharp$ and suppose for contradiction that the infimum is attained by some nonzero $u$.  Replacing $u$ by $|u|$ does not increase the gradient energy, and the Euler--Lagrange equation plus elliptic regularity and the strong maximum principle yield a smooth representative with
\[
 u>0.
\]
Because $W\ge0$ and $W\not\equiv0$,
\[
 \A(u)=\int_MWu^2dV_g>0.
\]
Since $r_\sharp$ is also saturated,
\begin{align*}
 \energy_{r_\sharp,t}(u)
 &=\energy_{r,t}(u)-c_n(r_\sharp-r)\A(u)\\
 &<\energy_{r,t}(u)\\
 &=t\Lambda_nD(u).
\end{align*}
This contradicts $\lambda_{r_\sharp,t}=t\Lambda_n$.  Hence no $r<r_\sharp$ admits a critical-level minimizer.
\end{proof}

\begin{corollary}[Weighted mass of interior minimizing sequences]\label{cor:interior-sequence}
Fix $r<r_\sharp$ and normalize a minimizing sequence by $D(u_j)=1$.  Then
\begin{equation}\label{eq:weighted-mass-zero}
 \int_MW u_j^2dV_g\longrightarrow0.
\end{equation}
\end{corollary}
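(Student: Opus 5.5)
The proof reuses the saturation identity at $r_\sharp$ exactly as in Step~4 of Theorem~\ref{thm:saturation-halfline}, now applied to a sequence instead of a single minimizer. Fix $r<r_\sharp$ and let $(u_j)$ be minimizing with $D(u_j)=1$, so that $\energy_{r,t}(u_j)\to\lambda_{r,t}=t\Lambda_n$, since $r\in\mathscr S_t([\omega])=(-\infty,r_\sharp]$. By \eqref{eq:Q-affine} with $s=r_\sharp$,
\[
 \energy_{r_\sharp,t}(u_j)=\energy_{r,t}(u_j)-c_n(r_\sharp-r)\int_MWu_j^2\,dV_g .
\]

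Because $r_\sharp$ is itself saturated, the left-hand side is at least $t\Lambda_n D(u_j)=t\Lambda_n$. Rearranging gives
\[
 0\le c_n(r_\sharp-r)\int_MWu_j^2\,dV_g\le \energy_{r,t}(u_j)-t\Lambda_n .
\]
The right-hand side tends to $0$. Since $c_n(r_\sharp-r)>0$, this forces $\int_MWu_j^2\,dV_g\to0$, which is \eqref{eq:weighted-mass-zero}.

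There is essentially no obstacle here. The only points to check are that $\lambda_{r_\sharp,t}=t\Lambda_n$, which is given by \eqref{eq:saturation-halfline}, and that $W\ge0$, so the weighted mass is nonnegative and the two-sided bound pins it to zero.

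One could add an interpretation, though it is not needed for the statement: combined with Lemma~\ref{lem:holder-sharp}, the sequence cannot keep $L^N$ mass on any region where $W$ is bounded below. Its mass must therefore concentrate, or vanish weakly, on $\{W=0\}$ or at points.
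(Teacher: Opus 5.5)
Your argument is correct and is exactly the paper's proof: saturation at $r_\sharp$ gives $\energy_{r_\sharp,t}(u_j)\ge t\Lambda_n$, and the affine identity \eqref{eq:Q-affine} then sandwiches $c_n(r_\sharp-r)\int_M Wu_j^2\,dV_g$ between $0$ and $\energy_{r,t}(u_j)-t\Lambda_n\to0$. Drop or correct your optional closing interpretation, however. The limit $\int_M Wu_j^2\,dV_g\to0$ controls only weighted $L^2$ mass, and Lemma~\ref{lem:holder-sharp} gives only an upper bound. So nothing prevents the $L^N$ mass from concentrating as a bubble at a point where $W>0$, since such a bubble has vanishing $L^2$ norm. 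This contradicts your claim that no $L^N$ mass can remain on a region where $W$ is bounded below.
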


\begin{proof}
Since $r_\sharp$ is saturated,
\[
 \energy_{r_\sharp,t}(u_j)\ge t\Lambda_nD(u_j)=t\Lambda_n.
\]
On the other hand,
\[
 \energy_{r,t}(u_j)
 =\energy_{r_\sharp,t}(u_j)+c_n(r_\sharp-r)\A(u_j).
\]
Thus
\[
 0\le c_n(r_\sharp-r)\A(u_j)
 \le \energy_{r,t}(u_j)-t\Lambda_n\longrightarrow0,
\]
which proves \eqref{eq:weighted-mass-zero}.
\end{proof}

Theorem \ref{thm:saturation-halfline} assumes that $\mathscr S_t([\omega])\ne\varnothing$. The following sufficient condition ensures this.

\begin{proposition}\label{prop:saturation-existence}
Assume
\begin{equation}\label{eq:W-positive}
 W=|\tf|^2\ge w_0>0
\end{equation}
on $M$.  Then $\mathscr S_t([\omega])\ne\varnothing$ for every $t>0$.  More precisely, if
\begin{equation}\label{eq:r-sufficient}
 r\le
 1-\frac{\bigl(t\Lambda_nB_0(g)-\min_M\mu^{1,t}\bigr)_+}
 {c_nw_0},
\end{equation}
then $\lambda_{r,t}=t\Lambda_n$.
\end{proposition}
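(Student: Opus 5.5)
The idea is to show directly that, for $r$ in the stated range, the quadratic form dominates $t\Lambda_n D(u)$ for every $u$. Combined with the universal bound \eqref{eq:sharp-upper}, this gives $\lambda_{r,t}=t\Lambda_n$. The only tool needed is the optimal compact Sobolev inequality \eqref{eq:sharp-compact} of Hebey--Vaugon, multiplied by $t\Lambda_n$. By the normalization \eqref{eq:at-K}, $t\Lambda_nK_{2n}^2=a_t$, so for every $u\in H^1(M)$
\[
 t\Lambda_n D(u)\le a_t\int_M|\nabla u|^2dV_g+t\Lambda_nB_0(g)\int_Mu^2dV_g .
\]
It therefore suffices to show pointwise that $\mu^{r,t}\ge t\Lambda_nB_0(g)$ on $M$. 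Then $\energy_{r,t}(u)\ge a_t\|\nabla u\|_2^2+t\Lambda_nB_0(g)\|u\|_2^2\ge t\Lambda_nD(u)$.

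For the pointwise bound I use the exact reduction \eqref{eq:exact-reduction}, written as $\mu^{r,t}=\mu^{1,t}+c_n(1-r)W$. Set $\beta:=\bigl(t\Lambda_nB_0(g)-\min_M\mu^{1,t}\bigr)_+$. The hypothesis \eqref{eq:r-sufficient} says $c_n(1-r)w_0\ge\beta$, and in particular $1-r\ge0$. Combined with $W\ge w_0$, this gives
\[
 \mu^{r,t}\ge\min_M\mu^{1,t}+c_n(1-r)w_0\ge\min_M\mu^{1,t}+\beta\ge t\Lambda_nB_0(g).
\]
The requirement $1-r\ge0$ is used exactly here, since multiplying $W\ge w_0$ by $c_n(1-r)$ preserves the inequality only when that coefficient is nonnegative. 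The positive part in $\beta$ takes care of the case where $\min\mu^{1,t}$ already exceeds $t\Lambda_nB_0$; there every $r\le1$ works.

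This shows that every $r$ satisfying \eqref{eq:r-sufficient} lies in $\mathscr S_t([\omega])$. The right-hand side of \eqref{eq:r-sufficient} is a finite real number because $w_0>0$, so the set of admissible $r$ is nonempty and $\mathscr S_t([\omega])\neq\varnothing$ for every $t>0$.

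There is no real obstacle. The one point needing care is that \eqref{eq:sharp-compact} holds with $B_0(g)$ finite for all $u\in H^1$; this is the Hebey--Vaugon theorem already invoked in the paper. Everything is done with the fixed background metric $g$, and conformal invariance of $\lambda_{r,t}$ is not needed.
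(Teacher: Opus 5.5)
Your proposal is correct and follows the paper's own proof essentially step for step. The paper likewise uses the exact reduction (with $r\le 1$ forced by the positive part) to get $\mu^{r,t}\ge t\Lambda_nB_0(g)$ pointwise, multiplies the optimal Sobolev inequality by $t\Lambda_n$ via $a_t=t\Lambda_nK_{2n}^2$ to obtain $\energy_{r,t}(u)\ge t\Lambda_nD(u)$, and closes with the universal upper bound.
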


\begin{proof}
Put
\[
 D_0:=t\Lambda_nB_0(g)-\min_M\mu^{1,t}.
\]
Condition \eqref{eq:r-sufficient} implies $r\le1$, so the coefficient $c_n(1-r)$ is nonnegative.  Since $W\ge w_0$, the exact reduction therefore gives
\begin{align*}
 \mu^{r,t}(x)
 &=\mu^{1,t}(x)+c_n(1-r)W(x)\\
 &\ge\min_M\mu^{1,t}+c_n(1-r)w_0\\
 &\ge\min_M\mu^{1,t}+(D_0)_+\\
 &\ge t\Lambda_nB_0(g).
\end{align*}
The positive part in \eqref{eq:r-sufficient} is essential when $D_0<0$: it keeps $r\le1$, so that replacing $W$ by its lower bound $w_0$ preserves the direction of the inequality.
Therefore, by \eqref{eq:at-K} and the optimal Sobolev inequality \eqref{eq:sharp-compact},
\begin{align*}
 \energy_{r,t}(u)
 &\ge a_t\int|\nabla u|^2+t\Lambda_nB_0(g)\int u^2\\
 &=t\Lambda_n\left(K_{2n}^2\int|\nabla u|^2+B_0(g)\int u^2\right)\\
 &\ge t\Lambda_nD(u).
\end{align*}
Hence $\lambda_{r,t}\ge t\Lambda_n$, and the universal upper bound forces equality.
\end{proof}

\subsection{The endpoint barrier and critical-level attainment}\label{sec:endpoint}
At the critical endpoint, by Theorem \ref{thm:pointwise-Aubin},
if $\lambda_{r,t}([\omega])=t\Lambda_n,$
then
\begin{equation}\label{eq:E-nonnegative}
 E_{r,t}\ge0
\end{equation}
pointwise on $M$. At points where $W>0$, the inequality can be solved explicitly for $r$.

\begin{definition}\label{def:Rloc}
Assume $W\not\equiv0$.  Define the local barrier
\begin{equation}\label{eq:Rloc}
 R_{\mathrm{loc}}(t;\omega)
 :=\inf_{\{x:W(x)>0\}}
 \left[
 1+\frac{2E_{1,t}(x)}{(2n-1)(n-1)W(x)}
 \right].
\end{equation}
\end{definition}

\begin{corollary}\label{cor:rsharp-local}
Under the hypotheses of Theorem \ref{thm:saturation-halfline},
\begin{equation}\label{eq:rsharp-local}
 r_\sharp\le R_{\mathrm{loc}}(t;\omega).
\end{equation}
\end{corollary}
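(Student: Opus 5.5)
The plan is to combine the saturation of the endpoint $r_\sharp$ with the pointwise Aubin criterion. By Theorem~\ref{thm:saturation-halfline}, $\lambda_{r_\sharp,t}([\omega])=t\Lambda_n$. Since the hypotheses of that theorem include $\tf\not\equiv0$, and the pointwise Aubin criterion (Theorem~\ref{thm:pointwise-Aubin}) is available for $n\ge3$, its contrapositive gives \eqref{eq:E-nonnegative} at $r=r_\sharp$:
\[
E_{r_\sharp,t}(x)\ge0\qquad\text{for all }x\in M.
\]
Explicitly, if $E_{r_\sharp,t}(p)<0$ at some $p$, then Theorem~\ref{thm:pointwise-Aubin} would force $\lambda_{r_\sharp,t}<t\Lambda_n$, a contradiction.

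Next I would use Proposition~\ref{prop:E-reduction} to write, at any point $x$,
\[
E_{r_\sharp,t}(x)=E_{1,t}(x)-\frac{(2n-1)(n-1)}{2}(r_\sharp-1)W(x).
\]
Fix $x$ with $W(x)>0$. Dividing the inequality $E_{r_\sharp,t}(x)\ge0$ by the positive quantity $\frac{(2n-1)(n-1)}{2}W(x)$ gives
\[
r_\sharp\le 1+\frac{2E_{1,t}(x)}{(2n-1)(n-1)W(x)}.
\]
This is exactly the content of Corollary~\ref{cor:r-optimized} read contrapositively. Taking the infimum over the nonempty open set $\{W>0\}$, which is nonempty because $W\not\equiv0$, yields $r_\sharp\le R_{\mathrm{loc}}(t;\omega)$.

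The only point requiring care is the dimension restriction. The pointwise Aubin criterion is stated for $n\ge3$, because it relies on the Aubin test-function expansion, which needs real dimension $m=2n>4$ for the curvature term to dominate. I would therefore state that the argument is carried out for $n\ge3$. For $n=2$ I would either note that the corresponding local expansion in real dimension $4$ still yields $E\ge0$ at saturated levels, which is the weakly critical necessary condition of Theorem~\ref{thm:collion-input}(i) type, or restrict the claim to $n\ge3$ as in Corollary~\ref{cor:r-optimized}.

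The remaining steps are mechanical. The sign of the division is correct since $W(x)>0$ and $n\ge2$. If $R_{\mathrm{loc}}=-\infty$, which cannot happen under saturation by the argument above, the statement would be vacuous. So the main obstacle is only confirming that the necessary condition $E_{r,t}\ge0$ holds at the saturated parameter $r_\sharp$ itself, and this is immediate from the closedness established in Theorem~\ref{thm:saturation-halfline}.
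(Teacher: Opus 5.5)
Your proposal is correct and follows the paper's proof: saturation at $r_\sharp$ plus the contrapositive of Theorem~\ref{thm:pointwise-Aubin} gives $E_{r_\sharp,t}\ge0$, and Proposition~\ref{prop:E-reduction} then gives the bound on $r_\sharp$ at each point with $W>0$, whose infimum is $R_{\mathrm{loc}}(t;\omega)$. Your caveat about $n=2$ is unnecessary, because $\tf$ vanishes identically in complex dimension two, so the hypothesis $W\not\equiv0$ already forces $n\ge3$.
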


\begin{proof}
At the endpoint, \eqref{eq:E-nonnegative} gives $E_{r_\sharp,t}\ge0$.  By \eqref{eq:E-reduction}, at every point with $W>0$,
\[
 E_{1,t}-(2n-1)c_n(r_\sharp-1)W\ge0.
\]
Solving for $r_\sharp$ and using $c_n=(n-1)/2$ gives the pointwise bound inside \eqref{eq:Rloc}.  Taking the infimum proves the claim.
\end{proof}

\begin{theorem}[Endpoint attainment]\label{thm:endpoint-attainment}
Assume the hypotheses of Theorem \ref{thm:saturation-halfline}.  If
\begin{equation}\label{eq:strict-endpoint}
 E_{r_\sharp,t}>0
 \qquad\text{everywhere on }M,
\end{equation}
then the critical infimum
\[
 \lambda_{r_\sharp,t}=t\Lambda_n
\]
is attained by a smooth positive function $u_\sharp$.  After normalizing $D(u_\sharp)=1$, it solves
\begin{equation}\label{eq:endpoint-pde}
 a_t\Delta u_\sharp+\mu^{r_\sharp,t}(\omega)u_\sharp
 =t\Lambda_nu_\sharp^{N-1}.
\end{equation}
Consequently $\widehat\omega=u_\sharp^{2/(n-1)}\omega$
has unit volume and constant deformed curvature $\mu^{r_\sharp,t}(\widehat\omega)=t\Lambda_n.$
\end{theorem}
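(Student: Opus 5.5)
The plan is to reduce the statement to Collion's endpoint theorem, Theorem~\ref{thm:collion-input}(ii). We take real dimension $m=2n$, which requires $n\ge3$ so that $m>4$; in the surface case $W\equiv0$ anyway, and the hypothesis $W\not\equiv0$ rules it out. The potential is $h:=\mu^{r_\sharp,t}/a_t$. Dividing the quotient by $a_t$ and using $a_t=t\Lambda_nK_{2n}^2$ gives
\[
\lambda(h)=\lambda_{r_\sharp,t}/a_t=K_{2n}^{-2},
\]
so $h$ is weakly critical. The strict barrier \eqref{eq:collion-strict} is exactly the hypothesis \eqref{eq:strict-endpoint}. Indeed, $\frac{4(2n-1)}{2n-2}h=\frac{2n-1}{t}\mu^{r_\sharp,t}$, so \eqref{eq:collion-strict} reads $E_{r_\sharp,t}>0$, as in the proof of Theorem~\ref{thm:pointwise-Aubin}.

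Coercivity of $\Delta_g+h$ follows from Lemma~\ref{lem:coercivity}, since $\lambda_{r_\sharp,t}=t\Lambda_n>0$.

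For the approximating family I would use the natural one in the connection parameter. For $s>0$ small, set $h_s:=\mu^{r_\sharp+s,t}/a_t$. By \eqref{eq:exact-reduction},
\[
h_s=h-\frac{c_n s}{a_t}W\le h.
\]
Moreover $h_s\not\equiv h$ because $W\not\equiv0$, and $h_s\to h$ in every $C^k$, in particular in $C^{0,\alpha}$.

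Subcriticality of $h_s$ is precisely the statement $\lambda_{r_\sharp+s,t}<t\Lambda_n$, which holds by Theorem~\ref{thm:saturation-halfline}, since $r_\sharp+s\notin\mathscr S_t$.

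Coercivity of $\Delta_g+h_s$ for small $s$ comes from the Lipschitz estimate \eqref{eq:Lipschitz}:
\[
\lambda_{r_\sharp+s,t}\ge t\Lambda_n-c_n\Tmass\, s>0
\]
for $s<t\Lambda_n/(c_n\Tmass)$, and Lemma~\ref{lem:coercivity} then applies again.

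With all hypotheses of Theorem~\ref{thm:collion-input}(ii) verified, we obtain a smooth positive minimizer $u_\sharp$. Normalizing $D(u_\sharp)=1$, the Euler--Lagrange equation of $\energy_{r_\sharp,t}/D$ at a minimizer with value $t\Lambda_n$ is
\[
a_t\Delta u_\sharp+\mu^{r_\sharp,t}u_\sharp=t\Lambda_n u_\sharp^{N-1},
\]
which is \eqref{eq:endpoint-pde}. The final geometric statement then comes from \eqref{eq:conformal-u}: with $\widehat\omega=u_\sharp^{2/(n-1)}\omega$ we get $\mu^{r_\sharp,t}(\widehat\omega)=t\Lambda_n$. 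The volume of $\widehat\omega$ is $\int u_\sharp^{N}\,\dv=1$, because $\widehat\omega^n=u_\sharp^{2n/(n-1)}\omega^n$.

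The main obstacle is less any computation than checking that the cited form of Collion's theorem really applies in this normalization. One point is that Collion's hypothesis requires the subcritical approximants to have minimizers with controlled behavior. Those minimizers exist by Theorem~\ref{thm:sharp-threshold}, and the blow-up analysis is handled inside Collion's result, ruled out by the strict barrier. A second point is the dimension restriction $m>4$. For $n=2$ the hypothesis $W\not\equiv0$ already fails, so I would simply note that the theorem is vacuous there, or implicitly assumes $n\ge3$.
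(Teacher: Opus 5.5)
Your proposal is correct and matches the paper's proof step for step. Both use the weakly critical potential $h=\mu^{r_\sharp,t}/a_t$ and the approximating family $h_s=\mu^{r_\sharp+s,t}/a_t$, get subcriticality from Theorem~\ref{thm:saturation-halfline} and coercivity from the Lipschitz estimate together with Lemma~\ref{lem:coercivity}, and identify Collion's strict barrier with $E_{r_\sharp,t}>0$. You also observe that Collion's restriction $m>4$ costs nothing, since $W\equiv0$ when $n=2$; this is a small point the paper leaves implicit.
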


\begin{proof}
Set again $h_\sharp:=\frac{\mu^{r_\sharp,t}}{a_t}.$ The endpoint equality is exactly
\[
 \lambda(h_\sharp)=K_{2n}^{-2},
\]
so $h_\sharp$ is weakly critical.  Since the endpoint invariant is positive, Lemma \ref{lem:coercivity} implies that $\Delta+h_\sharp$ is coercive.

For $\eps>0$, define
\[
 r_\eps:=r_\sharp+\eps,
 \qquad
 h_\eps:=\frac{\mu^{r_\eps,t}}{a_t}.
\]
The exact reduction gives
\begin{equation}\label{eq:h-eps}
 h_\eps
 =h_\sharp-\frac{c_n\eps}{a_t}W
 \le h_\sharp,
\end{equation}
with $h_\eps\not\equiv h_\sharp$ because $W\not\equiv0$.  Moreover $h_\eps\to h_\sharp$ in $C^\infty$ as $\eps\downarrow0$.  By the definition of $r_\sharp$,
\[
 \lambda_{r_\eps,t}<t\Lambda_n,
\]
so $h_\eps$ is subcritical.  The Lipschitz continuity of $\lambda_{r,t}$ implies
\[
 \lambda_{r_\eps,t}\longrightarrow t\Lambda_n>0,
\]
and therefore $\lambda_{r_\eps,t}>0$ for all sufficiently small $\eps$.  Another application of Lemma \ref{lem:coercivity} shows that $\Delta+h_\eps$ is coercive for such $\eps$.

It remains to check Collion's strict pointwise hypothesis.  In real dimension $m=2n$,
\[
 \frac{4(m-1)}{m-2}h_\sharp>\scal(g)
\]
is equivalent to
\[
 \frac{2n-1}{t}\mu^{r_\sharp,t}>\scal(g),
\]
which is precisely \eqref{eq:strict-endpoint}.  Thus every hypothesis of Theorem \ref{thm:collion-input}(ii) is satisfied by the family $h_\eps$.  Collion's theorem gives a positive minimizer for the endpoint quotient.

Normalize it by $D(u_\sharp)=1$.  The Euler--Lagrange equation is \eqref{eq:endpoint-pde}.  The conformal law \eqref{eq:conformal-u} then gives
\[
 \mu^{r_\sharp,t}(u_\sharp^{2/(n-1)}\omega)
 =t\Lambda_n,
\]
and $D(u_\sharp)=1$ is exactly the unit-volume condition because
\[
 dV_{u_\sharp^{2/(n-1)}\omega}=u_\sharp^N dV_\omega.
\]
\end{proof}

\begin{corollary}[Borderline alternative]\label{cor:borderline}
Under the hypotheses of Theorem \ref{thm:saturation-halfline}, if the endpoint infimum is not attained, then
\begin{equation}\label{eq:borderline-zero}
 \min_M E_{r_\sharp,t}=0.
\end{equation}
If in addition $W>0$ everywhere, then
\begin{equation}\label{eq:borderline-Rloc}
 r_\sharp=R_{\mathrm{loc}}(t;\omega).
\end{equation}
Equivalently, if $W>0$ and $r_\sharp<R_{\mathrm{loc}}(t;\omega)$, then the endpoint is attained.
\end{corollary}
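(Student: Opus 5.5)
The corollary is the contrapositive of Theorem~\ref{thm:endpoint-attainment}, combined with the necessary condition \eqref{eq:E-nonnegative}.

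First, the endpoint is saturated: by Theorem~\ref{thm:saturation-halfline}, $\lambda_{r_\sharp,t}=t\Lambda_n$. The pointwise Aubin criterion, Theorem~\ref{thm:pointwise-Aubin}, then forces $E_{r_\sharp,t}\ge0$ everywhere on $M$. Otherwise a point with $E_{r_\sharp,t}(p)<0$ would give $\lambda_{r_\sharp,t}<t\Lambda_n$. Equivalently, one can invoke Collion's necessary condition, Theorem~\ref{thm:collion-input}(i), with $h=\mu^{r_\sharp,t}/a_t$. Since $M$ is compact and $E_{r_\sharp,t}$ is continuous, $\min_M E_{r_\sharp,t}$ exists and is $\ge0$. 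If it were $>0$, then \eqref{eq:strict-endpoint} would hold, and Theorem~\ref{thm:endpoint-attainment} would produce a minimizer, contradicting non-attainment. Hence $\min_M E_{r_\sharp,t}=0$, which is \eqref{eq:borderline-zero}.

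For the second part, assume $W>0$ everywhere. Corollary~\ref{cor:rsharp-local} already gives $r_\sharp\le R_{\mathrm{loc}}$. For the reverse inequality, pick $x_0$ with $E_{r_\sharp,t}(x_0)=0$. By \eqref{eq:E-reduction},
\[
E_{1,t}(x_0)-\tfrac{(2n-1)(n-1)}{2}(r_\sharp-1)W(x_0)=0.
\]
Since $W(x_0)>0$, this gives
\[
r_\sharp=1+\frac{2E_{1,t}(x_0)}{(2n-1)(n-1)W(x_0)}\ge R_{\mathrm{loc}},
\]
because $R_{\mathrm{loc}}$ is the infimum of exactly this expression over $\{W>0\}=M$. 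Hence $r_\sharp=R_{\mathrm{loc}}(t;\omega)$. The final equivalent formulation is the contrapositive of this.

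The only delicate point is ensuring that the hypotheses of Theorem~\ref{thm:endpoint-attainment} are exactly those of Theorem~\ref{thm:saturation-halfline} together with strict positivity, so the contrapositive applies verbatim. This is immediate from the theorem's statement. I would also remark that when $W>0$ everywhere, the infimum defining $R_{\mathrm{loc}}$ is a minimum of a continuous function on compact $M$, although the argument above does not need this.
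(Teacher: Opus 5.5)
Your proposal is correct and follows the paper's proof. You get \eqref{eq:borderline-zero} from $E_{r_\sharp,t}\ge0$ together with the contrapositive of Theorem~\ref{thm:endpoint-attainment}, and for $W>0$ you evaluate the ratio at a zero $x_0$ of $E_{r_\sharp,t}$ to obtain $r_\sharp\ge R_{\mathrm{loc}}$; this is just the direct form of the paper's argument that $r_\sharp<R_{\mathrm{loc}}$ would force $E_{r_\sharp,t}>0$ everywhere (and, like the paper, you implicitly need $n\ge3$ for the pointwise Aubin criterion and Collion's theorem).
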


\begin{proof}
$E_{r_\sharp,t}\ge0$.  If it were strictly positive everywhere, Theorem \ref{thm:endpoint-attainment} would give a minimizer.  Thus nonattainment forces a zero, proving \eqref{eq:borderline-zero}.

Assume now $W>0$ everywhere.  The ratio defining $R_{\mathrm{loc}}$ is continuous.  If $r_\sharp<R_{\mathrm{loc}}$, then the pointwise inequality obtained from \eqref{eq:E-reduction} is strict at every point, so $E_{r_\sharp,t}>0$ on the compact manifold.  The endpoint would then be attained. Therefore, nonattainment forces equality in \eqref{eq:borderline-Rloc}.  The final statement is the contrapositive.
\end{proof}

\subsection{The connection-parameter phase diagram}\label{sec:phase}

We first describe the parameter at which positivity of the quadratic
form is lost.  This does not require the saturation set
$\mathscr S_t([\omega])$ to be nonempty.

\begin{proposition}[The positivity endpoint]
\label{prop:positivity-endpoint}
Fix $t>0$ and suppose that $W\not\equiv0$.  Assume that
$\lambda_{s,t}([\omega])>0$ for some $s\in\R$, and set
\begin{equation}\label{eq:gamma-st}
 \gamma_{s,t}
 :=
 \inf_{\A(u)>0}
 \frac{\energy_{s,t}(u)}{\A(u)},
 \qquad
 \A(u):=\int_M Wu^2\,dV_g .
\end{equation}
Then
\begin{equation}\label{eq:gamma-lower}
 0<\frac{\lambda_{s,t}([\omega])}{\Tmass([\omega])}
 \leq \gamma_{s,t}<\infty .
\end{equation}
Moreover, the infimum in \eqref{eq:gamma-st} is attained, after
normalizing $\A(v)=1$, by a smooth positive function $v$ satisfying
\begin{equation}\label{eq:weighted-eigenfunction}
 a_t\Delta_\omega v+\mu^{s,t}(\omega)v
 =
 \gamma_{s,t}Wv .
\end{equation}

Define
\begin{equation}\label{eq:r0-def}
 r_0
 :=
 s+\frac{\gamma_{s,t}}{c_n}.
\end{equation}
Then $r_0$ is independent of the choice of $s$ for which
$\lambda_{s,t}([\omega])>0$, is invariant under conformal changes of
the background metric, and is the unique parameter at which
$\lambda_{r,t}$ changes sign:
\begin{equation}\label{eq:positivity-transition}
 \lambda_{r,t}([\omega])
 \begin{cases}
 >0,& r<r_0,\\
 =0,& r=r_0,\\
 <0,& r>r_0.
 \end{cases}
\end{equation}
In particular,
\begin{equation}\label{eq:r0-lower-general}
 r_0-s
 \geq
 \frac{\lambda_{s,t}([\omega])}
 {c_n\Tmass([\omega])}.
\end{equation}
\end{proposition}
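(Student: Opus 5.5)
The plan is to treat $\gamma_{s,t}$ as the bottom of a weighted eigenvalue problem and read off the sign of $\lambda_{r,t}$ from the affine identity \eqref{eq:Q-affine}.

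\textbf{Step 1: the bounds \eqref{eq:gamma-lower}.} Fix $s$ with $\lambda_{s,t}>0$. For any $u$ with $\A(u)>0$, combine $\energy_{s,t}(u)\ge\lambda_{s,t}D(u)$ with Lemma~\ref{lem:holder-sharp}, which gives $D(u)\ge\A(u)/\Tmass$. This yields $\gamma_{s,t}\ge\lambda_{s,t}/\Tmass>0$. Since $W\not\equiv0$, some smooth $u$ has $\A(u)>0$, and so $\gamma_{s,t}<\infty$.

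\textbf{Step 2: attainment of $\gamma_{s,t}$.} Take a minimizing sequence with $\A(v_j)=1$. Lemma~\ref{lem:coercivity} gives $\|v_j\|_{H^1}^2\le C\energy_{s,t}(v_j)$, which is bounded. Extract a subsequence converging weakly in $H^1$ and strongly in $L^2$. Since $W$ is bounded, $\A$ is continuous in $L^2$, so $\A(v)=1$. Weak lower semicontinuity of the coercive quadratic form $\energy_{s,t}$ then gives a minimizer. The key point is that the constraint is subcritical, so there is no bubbling issue. Replacing $v$ by $|v|$ keeps it a minimizer. The Euler--Lagrange equation is \eqref{eq:weighted-eigenfunction}, and elliptic regularity makes $v$ smooth. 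For strict positivity, rewrite the equation as $a_t\Delta v+(\mu^{s,t}-\gamma W)^+v\ge0$ plus a bounded term and apply the strong maximum principle.

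\textbf{Step 3: sign transition.} By \eqref{eq:Q-affine}, $\energy_{r,t}(u)=\energy_{s,t}(u)-c_n(r-s)\A(u)$. I would split into cases.
\begin{itemize}
\item If $r\le s$, monotonicity gives $\lambda_{r,t}\ge\lambda_{s,t}>0$.
\item If $s<r<r_0$, then for $u$ with $\A(u)>0$,
\[
\energy_{r,t}(u)\ge\bigl(1-c_n(r-s)/\gamma_{s,t}\bigr)\energy_{s,t}(u)\ge\theta\lambda_{s,t}D(u),
\]
with $\theta=1-(r-s)/(r_0-s)>0$. If $\A(u)=0$, then $\energy_{r,t}(u)=\energy_{s,t}(u)\ge\lambda_{s,t}D(u)$. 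Either way $\lambda_{r,t}>0$.
\item At $r=r_0$, testing with $v$ gives $\energy_{r_0,t}(v)=0$, so $\lambda_{r_0,t}\le0$. The same inequality with $\theta=0$ gives $\energy_{r_0,t}\ge0$, hence $\lambda_{r_0,t}=0$.
\item For $r>r_0$, $\energy_{r,t}(v)=-c_n(r-r_0)<0$, so $\lambda_{r,t}<0$.
\end{itemize}
The bound \eqref{eq:r0-lower-general} is just Step 1 rewritten.

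\textbf{Step 4: independence and invariance.} Independence of $s$ is immediate, because $r_0$ is characterized intrinsically as the sign-change point of $r\mapsto\lambda_{r,t}$. Conformal invariance follows because $\lambda_{r,t}([\omega])$ depends only on the class. One can also check this directly: under $\widehat\omega=u^{2/(n-1)}\omega$ the maps $\energy$ and $\A$ transform covariantly, since $W$ has weight $-1$ and $W\,dV$ scales as $u^{N-2}$.

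The main obstacle is the positivity of $v$ in Step 2, where the eigen-term $\gamma W v$ must be absorbed into the potential. This is handled by the strong maximum principle for $a_t\Delta+c$ with $c$ bounded, applied to the nonnegative solution $v\not\equiv0$.
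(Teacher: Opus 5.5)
Your proof is correct and matches the paper's argument step for step: the H\"older lower bound, the direct-method minimization of the subcritical weighted quotient followed by the strong maximum principle, and the case analysis via \eqref{eq:Q-affine} tested on the minimizer $v$. The only deviation is that you get independence of $s$ and conformal invariance from the sign-change characterization together with the conformal invariance of $\lambda_{r,t}$, whereas the paper uses the shift identity $\gamma_{q,t}=\gamma_{s,t}-c_n(q-s)$ and direct covariance; both are valid, though your optional direct check has a slip: under $\widehat\omega=u^{2/(n-1)}\omega$ one has $\widehat W\,dV_{\widehat g}=u^{2}\,W\,dV_g$ (not $u^{N-2}$), which is what gives $\widehat{\A}(w)=\A(uw)$.
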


\begin{proof}
Since $W\not\equiv0$, the admissible set in \eqref{eq:gamma-st} is
nonempty and $\Tmass([\omega])>0$.  By the definition of
$\lambda_{s,t}$ and Lemma \ref{lem:holder-sharp},
\[
 \energy_{s,t}(u)
 \geq
 \lambda_{s,t}D(u),
 \qquad
 \A(u)
 \leq
 \Tmass([\omega])D(u).
\]
Hence, whenever $\A(u)>0$,
\[
 \frac{\energy_{s,t}(u)}{\A(u)}
 \geq
 \frac{\lambda_{s,t}}{\Tmass([\omega])},
\]
which proves the lower bound in \eqref{eq:gamma-lower}.  Finiteness
follows by evaluating the quotient on any smooth function $u$ with
$\A(u)>0$.

Let $(u_j)$ be a minimizing sequence normalized by $\A(u_j)=1$.
Since $\lambda_{s,t}>0$, Lemma \ref{lem:coercivity} gives a uniform
$H^1$ bound.  Passing to a subsequence,
\[
 u_j\rightharpoonup v
 \quad\text{in }H^1(M),
 \qquad
 u_j\longrightarrow v
 \quad\text{in }L^2(M).
\]
As $W$ is smooth, $\A(u_j)\longrightarrow\A(v),$
and therefore $\A(v)=1$.  The Dirichlet term is weakly lower
semicontinuous, while the potential term converges by the strong
$L^2$ convergence.  Thus
\[
 \energy_{s,t}(v)
 \leq
 \liminf_{j\to\infty}\energy_{s,t}(u_j)
 =
 \gamma_{s,t},
\]
so $v$ attains the infimum.

Replacing $v$ by $|v|$ if necessary, we may assume $v\geq0$.
The Euler--Lagrange equation for the constraint $\A(v)=1$ is
\eqref{eq:weighted-eigenfunction}.  Elliptic regularity and the
strong maximum principle give $v\in C^\infty(M)$ and $v>0$.

The affine dependence \eqref{eq:Q-affine} gives, for every
$q\in\R$ and every $u$ with $\A(u)>0$,
\[
 \frac{\energy_{q,t}(u)}{\A(u)}
 =
 \frac{\energy_{s,t}(u)}{\A(u)}
 -
 c_n(q-s).
\]
Consequently, if $\gamma_{q,t}$ denotes the infimum
\eqref{eq:gamma-st} with $s$ replaced by $q$, then
\begin{equation}\label{eq:gamma-affine}
 \gamma_{q,t}
 =
 \gamma_{s,t}-c_n(q-s).
\end{equation}
In particular,
\[
 q+\frac{\gamma_{q,t}}{c_n}
 =
 s+\frac{\gamma_{s,t}}{c_n},
\]
so the value $r_0$ in \eqref{eq:r0-def} does not depend on the
base point $s$.

We next prove \eqref{eq:positivity-transition}.  If $r<s$, the
monotonicity of $r\mapsto\lambda_{r,t}$ gives $\lambda_{r,t}\geq\lambda_{s,t}>0.$ Suppose instead that $s\leq r<r_0$.  Put $\delta:=c_n(r-s).$
Then $0\leq\delta<\gamma_{s,t}$.  By the definition of
$\gamma_{s,t}$,
\[
 \gamma_{s,t}\A(u)\leq\energy_{s,t}(u),
\]
and hence
\begin{align*}
 \energy_{r,t}(u)
 &=
 \energy_{s,t}(u)-\delta\A(u)\\
 &\geq
 \left(1-\frac{\delta}{\gamma_{s,t}}\right)
 \energy_{s,t}(u)\\
 &\geq
 \left(1-\frac{\delta}{\gamma_{s,t}}\right)
 \lambda_{s,t}D(u).
\end{align*}
Thus $\lambda_{r,t}>0$.

At $r=r_0$, the definition of $r_0$ gives $\energy_{r_0,t}(u) = \energy_{s,t}(u)-\gamma_{s,t}\A(u) \geq0 $ for every $u$, while the minimizing function $v$ satisfies
\[
 \energy_{r_0,t}(v)
 =
 \energy_{s,t}(v)-\gamma_{s,t}\A(v)
 =
 0.
\]
Therefore $\lambda_{r_0,t}=0$.  Finally, if $r>r_0$, then
\[
 \energy_{r,t}(v)
 =
 \energy_{r_0,t}(v)
 -
 c_n(r-r_0)\A(v)
 <0,
\]
so $\lambda_{r,t}<0$.  This proves \eqref{eq:positivity-transition}
and also the uniqueness of the zero.

It remains to check conformal invariance.  Let $\widehat\omega
 = \varphi^{2/(n-1)}\omega,$ $\varphi>0.$ The conformal covariance of the quadratic form gives $ \widehat{\energy}_{s,t}(u) = \energy_{s,t}(\varphi u).$
Moreover, by the conformal transformation of $W$,
\[
 \int_M\widehat W\,u^2\,dV_{\widehat g}
 =
 \int_M W(\varphi u)^2\,dV_g.
\]
Thus the quotient defining $\gamma_{s,t}$ is unchanged under the
bijection $u\mapsto\varphi u$.  Hence $\gamma_{s,t}$, and therefore
$r_0$, depends only on the Hermitian conformal class.

Finally, \eqref{eq:r0-lower-general} follows immediately from
\eqref{eq:gamma-lower} and \eqref{eq:r0-def}.
\end{proof}

We now assume that
$\mathscr S_t([\omega])\neq\varnothing$, and $r_\sharp$ denotes the
endpoint given by Theorem \ref{thm:saturation-halfline}.

\begin{theorem}[Connection-parameter phase diagram]
\label{thm:phase}
Assume $W\not\equiv0$ and
$\mathscr S_t([\omega])\neq\varnothing$.  Let $r_\sharp$ be the
endpoint of the interval.  Then the positivity endpoint
$r_0$ of Proposition \ref{prop:positivity-endpoint} satisfies
\begin{equation}\label{eq:r0-saturated}
 r_0
 =
 r_\sharp
 +
 \frac{1}{c_n}
 \inf_{\A(u)>0}
 \frac{\energy_{r_\sharp,t}(u)}{\A(u)}
 >
 r_\sharp .
\end{equation}
Moreover,
\begin{equation}\label{eq:phase-gap}
 r_0-r_\sharp
 \geq
 \frac{2t\Lambda_n}
 {(n-1)\Tmass([\omega])}.
\end{equation}
The invariant satisfies
\begin{equation}\label{eq:phase-diagram}
 \lambda_{r,t}([\omega])
 =
 \begin{cases}
 t\Lambda_n,
     & r\leq r_\sharp,\\[2mm]
 \text{a number in }(0,t\Lambda_n),
     & r_\sharp<r<r_0,\\[2mm]
 0,
     & r=r_0,\\[2mm]
 <0,
     & r>r_0.
 \end{cases}
\end{equation}
Furthermore, the map
\[
 r\longmapsto\lambda_{r,t}([\omega])
\]
is strictly decreasing on $(r_\sharp,\infty)$.

For $r<r_\sharp$ the critical level is not attained.  For
$r>r_\sharp$ the infimum is attained by
Theorem  \ref{thm:sharp-threshold}; at $r=r_\sharp$ the endpoint attainment
criterion of Theorem \ref{thm:endpoint-attainment} applies.
\end{theorem}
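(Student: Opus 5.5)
The plan is to deduce everything from Proposition~\ref{prop:positivity-endpoint}, applied with base point $s=r_\sharp$, together with Theorem~\ref{thm:saturation-halfline} and the affine structure \eqref{eq:Q-affine}.

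\emph{Formula for $r_0$ and the gap.} By Theorem~\ref{thm:saturation-halfline} we have $\lambda_{r_\sharp,t}=t\Lambda_n>0$. Hence $s=r_\sharp$ is an admissible base point in Proposition~\ref{prop:positivity-endpoint}. Formula \eqref{eq:r0-def} then gives \eqref{eq:r0-saturated} directly, with $\gamma_{r_\sharp,t}$ equal to the displayed infimum. The lower bound \eqref{eq:gamma-lower} gives $\gamma_{r_\sharp,t}\ge t\Lambda_n/\Tmass>0$, so $r_0>r_\sharp$. Dividing by $c_n=(n-1)/2$ gives exactly \eqref{eq:phase-gap}.

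\emph{Phase diagram.} The first line of \eqref{eq:phase-diagram} is \eqref{eq:saturation-halfline}. The last two lines are \eqref{eq:positivity-transition}. For $r_\sharp<r<r_0$ there are two facts to check.
\begin{itemize}
\item Positivity follows from \eqref{eq:positivity-transition}.
\item The strict upper bound $\lambda_{r,t}<t\Lambda_n$ holds because $r\notin\mathscr S_t=(-\infty,r_\sharp]$ and the universal bound \eqref{eq:sharp-upper} applies.
\end{itemize}

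\emph{Strict monotonicity.} This is the step needing an argument beyond citation. Fix $r_\sharp<r_1<r_2$. Since $\lambda_{r_1,t}<t\Lambda_n$, Theorem~\ref{thm:sharp-threshold} provides a smooth positive minimizer $u_1$ for $\lambda_{r_1,t}$, which we normalize by $D(u_1)=1$. Positivity of $u_1$ and $W\ge0$, $W\not\equiv0$ give $\A(u_1)>0$. Then \eqref{eq:Q-affine} yields
\[
\lambda_{r_2,t}\le \energy_{r_2,t}(u_1)=\lambda_{r_1,t}-c_n(r_2-r_1)\A(u_1)<\lambda_{r_1,t}.
\]
Alternatively, one can argue by concavity: $\lambda$ is concave, nonincreasing, and not constant on $(r_\sharp,\infty)$, since it tends to $-\infty$. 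A concave function that is constant on some interval $[a,b]$ with $a>r_\sharp$ would have to be constant at that value on $(-\infty,b]$ as well. This contradicts $\lambda_{r,t}<t\Lambda_n$ on $(r_\sharp,a)$ while $\lambda=t\Lambda_n$ to the left of $r_\sharp$. The minimizer argument is the cleaner route, so I would use it.

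\emph{Attainment statements.} These are quotations of earlier results:
\begin{itemize}
\item non-attainment for $r<r_\sharp$ is Step~4 of Theorem~\ref{thm:saturation-halfline};
\item attainment for $r>r_\sharp$ is Theorem~\ref{thm:sharp-threshold}, applicable because the inequality there is strict;
\item the endpoint case $r=r_\sharp$ refers to Theorem~\ref{thm:endpoint-attainment}.
\end{itemize}

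The only step I expect to require care is strict monotonicity. There the key point is that the minimizer is strictly positive, so that $\A(u_1)>0$ and the inequality above is strict.
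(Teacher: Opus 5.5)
Your proposal is correct and follows essentially the same route as the paper. You apply Proposition~\ref{prop:positivity-endpoint} with base point $s=r_\sharp$ to get the formula for $r_0$, the gap bound and the sign change. You read off the saturated line and the strict upper bound on $(r_\sharp,r_0)$ from Theorem~\ref{thm:saturation-halfline}, and you prove strict monotonicity by testing $\energy_{r_2,t}$ at a positive minimizer for $\lambda_{r_1,t}$, which is exactly the paper's argument. Your concavity-based alternative for strict monotonicity is also valid, but you are right that it is not needed.
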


\begin{proof}
Since $\lambda_{r_\sharp,t}=t\Lambda_n>0,$
Proposition \ref{prop:positivity-endpoint} applies with $s=r_\sharp$.
This gives \eqref{eq:r0-saturated} and the sign change at $r_0$.
It also gives
\[
 r_0-r_\sharp
 \geq
 \frac{\lambda_{r_\sharp,t}}
 {c_n\Tmass([\omega])}
 =
 \frac{t\Lambda_n}
 {c_n\Tmass([\omega])}
 =
 \frac{2t\Lambda_n}
 {(n-1)\Tmass([\omega])},
\]
which proves \eqref{eq:phase-gap}. By Theorem \ref{thm:saturation-halfline}, $\lambda_{r,t}=t\Lambda_n$ for $r\leq r_\sharp.$

If $r_\sharp<r<r_0$, then
Proposition \ref{prop:positivity-endpoint} gives $\lambda_{r,t}>0$, while
$r\notin\mathscr S_t([\omega])$ gives
$\lambda_{r,t}<t\Lambda_n$.  The remaining two cases in
\eqref{eq:phase-diagram} follow directly from
Proposition \ref{prop:positivity-endpoint}.

It remains to prove strict monotonicity.  Let $r_\sharp<s<r.$
Since $s>r_\sharp$, $\lambda_{s,t}<t\Lambda_n.$ By Theorem \ref{thm:sharp-threshold}, the infimum is attained by a smooth
positive minimizer $u_s$.  Normalize $D(u_s)=1$.  Since
$W\not\equiv0$ and $u_s>0$, $\A(u_s)>0.$
Using \eqref{eq:Q-affine},
\begin{align*}
 \lambda_{r,t}
 &\leq
 \energy_{r,t}(u_s)\\
 &=
 \energy_{s,t}(u_s)-c_n(r-s)\A(u_s)\\
 &=
 \lambda_{s,t}-c_n(r-s)\A(u_s)\\
 &<
 \lambda_{s,t}.
\end{align*}
Thus $r\mapsto\lambda_{r,t}$ is strictly decreasing on
$(r_\sharp,\infty)$.

The statements concerning attainment follow from
Theorem \ref{thm:saturation-halfline}, \ref{thm:sharp-threshold} and \ref{thm:endpoint-attainment}.
\end{proof}

\begin{remark}\label{rem:two-parameters}
The parameters $r_\sharp$ and $r_0$ have different
meanings.  The first is the endpoint of the saturated interval
\[
 \{r:\lambda_{r,t}=t\Lambda_n\},
\]
whereas $r_0$ is the unique zero of $\lambda_{r,t}$.  The parameter
$r_0$ is defined whenever $\lambda_{s,t}>0$ for some $s$, even if
$\mathscr S_t([\omega])$ is empty.  When the saturated interval is
nonempty, \eqref{eq:phase-gap} gives a quantitative separation
between the two parameters.
\end{remark}

\subsection{Constant potentials and exact critical levels}
\label{sec:constant}

When $W$ and $\mu^{1,t}$ are constant, the optimal second Sobolev
constant $B_0(g)$ gives explicit formulas for both $r_\sharp$ and
$r_0$.  We use the Djadli--Druet extremal criterion
\eqref{eq:DD}.

\begin{theorem}
\label{thm:constant-r}
Suppose
\[
 W\equiv w>0,
 \qquad
 \mu^{1,t}\equiv C_t .
\]
Then
\begin{equation}\label{eq:constant-rsharp}
 r_\sharp
 =
 1+\frac{C_t-t\Lambda_nB_0(g)}{c_nw},
 \qquad
 r_0
 =
 1+\frac{C_t}{c_nw}.
\end{equation}
More precisely,
\[
 \lambda_{r,t}=t\Lambda_n
 \quad\Longleftrightarrow\quad
 r\leq r_\sharp,
\]
and the critical level is not attained for $r<r_\sharp$.  At
$r=r_\sharp$, it is attained if and only if the optimal Sobolev
inequality \eqref{eq:sharp-compact} has a nonzero extremal.
Consequently, the endpoint is attained under \eqref{eq:DD}, and in
particular whenever $\scal(g)\leq0$.

A constant function is an endpoint minimizer if and only if
\[
 B_0(g)=\Vol(M,g)^{-1/n}.
\]
If
\[
 B_0(g)>\Vol(M,g)^{-1/n},
\]
then every endpoint minimizer is nonconstant.
\end{theorem}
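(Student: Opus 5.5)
With $W\equiv w$ and $\mu^{1,t}\equiv C_t$ constant, the exact reduction \eqref{eq:exact-reduction} makes the potential constant for every $r$: $\mu^{r,t}\equiv C_t-c_n(r-1)w=:\kappa_r$. The plan is to reduce everything to the optimal second Sobolev inequality \eqref{eq:sharp-compact}. For a constant potential, and using $a_t=t\Lambda_nK_{2n}^2$ from \eqref{eq:at-K}, we can write
\[
\energy_{r,t}(u)-t\Lambda_nD(u)=t\Lambda_n\Bigl(K_{2n}^2\!\int|\nabla u|^2+B_0(g)\!\int u^2-D(u)\Bigr)+\bigl(\kappa_r-t\Lambda_nB_0(g)\bigr)\int u^2 .
\]
The bracket is nonnegative by \eqref{eq:sharp-compact}. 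Hence $\kappa_r\ge t\Lambda_nB_0(g)$ gives $\lambda_{r,t}\ge t\Lambda_n$, and the universal bound \eqref{eq:sharp-upper} then gives equality.

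Conversely, suppose $\kappa_r<t\Lambda_nB_0(g)$. By the supremum characterization \eqref{eq:B0-sup}, choose $u$ with $D(u)-K_{2n}^2\int|\nabla u|^2>\bigl(\kappa_r/(t\Lambda_n)\bigr)\int u^2$. Multiplying by $t\Lambda_n$ yields $\energy_{r,t}(u)<t\Lambda_nD(u)$. Therefore $\lambda_{r,t}=t\Lambda_n$ exactly when $\kappa_r\ge t\Lambda_nB_0(g)$. Solving this inequality for $r$ gives the stated formula for $r_\sharp$, and in particular shows that $\mathscr S_t\neq\varnothing$.

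For $r_0$, I use Proposition \ref{prop:positivity-endpoint} with $s=r_\sharp$. Since $\A(u)=w\int u^2$, we get
\[
\gamma_{r_\sharp,t}=\inf_u\frac{a_t\int|\nabla u|^2+\kappa_{r_\sharp}\int u^2}{w\int u^2}=\frac{\kappa_{r_\sharp}}{w},
\]
attained by constants. Then $r_0=r_\sharp+\kappa_{r_\sharp}/(c_nw)$, which simplifies to $1+C_t/(c_nw)$. One can also see this directly: $\kappa_{r_0}=0$, so the form is $a_t\int|\nabla u|^2$, which vanishes on constants. Non-attainment for $r<r_\sharp$ is already Theorem \ref{thm:saturation-halfline}.

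The main point is the endpoint. At $r=r_\sharp$ we have $\kappa_{r_\sharp}=t\Lambda_nB_0(g)$, so the displayed identity becomes
\[
\energy_{r_\sharp,t}(u)-t\Lambda_nD(u)=t\Lambda_n\Bigl(K_{2n}^2\!\int|\nabla u|^2+B_0\!\int u^2-D(u)\Bigr).
\]
Thus $u$ minimizes the endpoint quotient if and only if $u\neq0$ is an extremal for \eqref{eq:sharp-compact}; this is the stated equivalence. The Djadli--Druet criterion \eqref{eq:DD}, which holds in particular when $\scal(g)\le0$ since $B_0>0$ by \eqref{eq:B0-volume}, then supplies an extremal.

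For constants, $u\equiv1$ is an extremal if and only if $\Vol^{2/N}=B_0\Vol$, i.e.\ $B_0=\Vol(M,g)^{-1/n}$, using $1-2/N=1/n$. If $B_0>\Vol^{-1/n}$, constants fail to be extremals, so every endpoint minimizer is nonconstant. No step is deep here; the only care needed is keeping the normalizations $a_t=t\Lambda_nK_{2n}^2$ consistent throughout.
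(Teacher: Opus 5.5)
Your proof is correct and follows essentially the same route as the paper: the same identity splitting $\energy_{r,t}(u)-t\Lambda_nD(u)$ into $t\Lambda_n$ times the Sobolev defect plus $(\kappa_r-t\Lambda_nB_0(g))\int u^2$ gives the saturation criterion, the characterization of endpoint minimizers as Sobolev extremals, and the constant-function test. The differences are minor. You obtain $r_0$ from Proposition~\ref{prop:positivity-endpoint} with $s=r_\sharp$ (computing $\gamma_{r_\sharp,t}=\kappa_{r_\sharp}/w$) and interior nonattainment from Theorem~\ref{thm:saturation-halfline}. The paper instead reads $r_0$ off from a direct sign analysis of the constant potential $P_r$, and nonattainment from the strict positivity of the second term in the identity for $r<r_\sharp$.
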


\begin{proof}
Set $P_r:=C_t-c_n(r-1)w.$ Then $\mu^{r,t}\equiv P_r$, and
\begin{equation}\label{eq:constant-defect}
\begin{aligned}
 \energy_{r,t}(u)-t\Lambda_nD(u)
 &=
 t\Lambda_n
 \left(
 K_{2n}^2\int_M|\nabla u|^2\,dV_g
 +
 B_0(g)\int_Mu^2\,dV_g
 -
 D(u)
 \right)\\
 &\quad+
 \bigl(P_r-t\Lambda_nB_0(g)\bigr)
 \int_Mu^2\,dV_g .
\end{aligned}
\end{equation}
By the optimal Sobolev inequality, the first term on the
right-hand side is nonnegative.  Hence, if
\[
 P_r\geq t\Lambda_nB_0(g),
\]
then
\[
 \energy_{r,t}(u)\geq t\Lambda_nD(u)
\]
for every $u$, and therefore
$\lambda_{r,t}=t\Lambda_n$ by the universal upper bound.

Conversely, suppose $P_r<t\Lambda_nB_0(g).$
Then
\[
 B:=\frac{P_r}{t\Lambda_n}<B_0(g).
\]
By the optimality of $B_0(g)$, there exists a nonzero $u$ such that
\[
 D(u)
 >
 K_{2n}^2\int_M|\nabla u|^2\,dV_g
 +
 B\int_Mu^2\,dV_g.
\]
Equivalently,
\[
 \energy_{r,t}(u)<t\Lambda_nD(u),
\]
so $\lambda_{r,t}<t\Lambda_n$.  Thus the spherical level occurs
exactly when
\[
 P_r\geq t\Lambda_nB_0(g),
\]
which is equivalent to $r\leq r_\sharp$ with $r_\sharp$ as in
\eqref{eq:constant-rsharp}.

At $r=r_\sharp$, the second term in
\eqref{eq:constant-defect} vanishes.  Therefore equality
\[
 \energy_{r_\sharp,t}(u)=t\Lambda_nD(u)
\]
holds precisely for nonzero extremals of
\eqref{eq:sharp-compact}.  If $r<r_\sharp$, then
\[
 P_r-t\Lambda_nB_0(g)>0,
\]
so equality is impossible for a nonzero $u$.  This proves both the
attainment statement and nonattainment in the interior of the
saturated interval.  The Djadli--Druet conclusion follows from
\eqref{eq:DD}.

It remains to determine $r_0$.  If $P_r>0$, then, using
\eqref{eq:sharp-compact},
\[
 \energy_{r,t}(u)
 \geq
 \min\left\{
 \frac{a_t}{K_{2n}^2},
 \frac{P_r}{B_0(g)}
 \right\}D(u),
\]
so $\lambda_{r,t}>0$.  If $P_r=0$, then
$\energy_{r,t}(u)\geq0$ and constants give
$\lambda_{r,t}=0$.  If $P_r<0$, testing with a nonzero constant
gives $\lambda_{r,t}<0$.  Hence the unique zero is characterized by
$P_{r_0}=0$, which gives the second formula in
\eqref{eq:constant-rsharp}.

Finally, at $r=r_\sharp$ the endpoint minimizers are precisely the
extremals of \eqref{eq:sharp-compact}.  A nonzero constant is such
an extremal exactly when
\[
 B_0(g)=\Vol(M,g)^{-1/n}.
\]
The last assertion follows.
\end{proof}

\begin{lemma}[A spectral test for nonconstant extremals]
\label{lem:spectral}
Let $V=\Vol(M,g)$.  Suppose there exists a smooth nonzero function
$\psi$ with
\[
 \int_M\psi\,dV_g=0
\]
and
\begin{equation}\label{eq:spectral-test}
 \frac{\displaystyle\int_M|\nabla\psi|^2\,dV_g}
      {\displaystyle\int_M\psi^2\,dV_g}
 <
 \Lambda_nV^{-1/n}.
\end{equation}
Then
\[
 B_0(g)>V^{-1/n}.
\]
\end{lemma}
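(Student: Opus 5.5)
Since $B_0(g)$ is characterized by the supremum \eqref{eq:B0-sup}, it suffices to exhibit one function $u$ with
\[
 \frac{D(u)-K_{2n}^2\int_M|\nabla u|^2\,dV_g}{\int_M u^2\,dV_g}>V^{-1/n}.
\]
We test with a perturbation of a constant, $u_\eps=1+\eps\psi$, and expand every term to second order in $\eps$. The constant alone gives exactly $V^{-1/n}$, because $D(1)=V^{2/N}=V^{(n-1)/n}$ and $\int 1=V$. So the task reduces to showing that the second-order coefficient of the defect is positive under \eqref{eq:spectral-test}.

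For the expansion, note that $\int\psi=0$ gives $\int u_\eps^2=V+\eps^2\int\psi^2$. Next, $\int|u_\eps|^N=V+\tfrac{N(N-1)}{2}\eps^2\int\psi^2+O(\eps^3)$, where the linear term again vanishes. Therefore
\[
 D(u_\eps)=V^{2/N}\Bigl(1+\tfrac{2}{N}\cdot\tfrac{N(N-1)}{2}\eps^2V^{-1}\textstyle\int\psi^2\Bigr)+O(\eps^3)=V^{(n-1)/n}+(N-1)\eps^2V^{-1/n}\textstyle\int\psi^2+O(\eps^3).
\]
The gradient term is exactly $K_{2n}^2\eps^2\int|\nabla\psi|^2$. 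We then compare with $V^{-1/n}\int u_\eps^2=V^{(n-1)/n}+\eps^2V^{-1/n}\int\psi^2$, which yields
\[
 D(u_\eps)-K_{2n}^2\int|\nabla u_\eps|^2-V^{-1/n}\int u_\eps^2
 =\eps^2\Bigl[(N-2)V^{-1/n}\textstyle\int\psi^2-K_{2n}^2\int|\nabla\psi|^2\Bigr]+O(\eps^3).
\]
Finally, insert the normalizations $N-2=\tfrac{2}{n-1}$ and $K_{2n}^2=\tfrac{2}{(n-1)\Lambda_n}$. The bracket becomes $\tfrac{2}{(n-1)\Lambda_n}\bigl(\Lambda_nV^{-1/n}\int\psi^2-\int|\nabla\psi|^2\bigr)$, and this is strictly positive precisely by \eqref{eq:spectral-test}. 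Hence for small $\eps>0$ the quotient exceeds $V^{-1/n}$, so $B_0(g)>V^{-1/n}$.

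The only delicate step is the expansion of $\int|1+\eps\psi|^N$, since $N=2n/(n-1)$ is not an integer and may be less than $3$. We handle it by noting that $\psi$ is bounded, so for $\eps\|\psi\|_\infty<1/2$ the base $1+\eps\psi$ is positive. The map $x\mapsto(1+x)^N$ is then smooth on $[-1/2,1/2]$, and Taylor's theorem with a uniform remainder $O(\eps^3)$, integrated over $M$, justifies the expansion. Everything else is bookkeeping of constants.
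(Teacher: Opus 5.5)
Your proof is correct and is essentially the paper's argument: the same test function $1+\varepsilon\psi$, the same second-order expansion of $D(1+\varepsilon\psi)$, and the same identity $(N-2)/K_{2n}^2=\Lambda_n$ to sign the $\varepsilon^2$ coefficient. Your Taylor-remainder justification for the non-integer exponent $N$, and your direct use of the supremum formula for $B_0(g)$ to get the strict inequality, are small clarifications that the paper leaves implicit.
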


\begin{proof}
For
\[
 u_\varepsilon:=1+\varepsilon\psi,
\]
with $|\varepsilon|$ sufficiently small, the zero-mean assumption
gives
\[
 \int_Mu_\varepsilon^N\,dV_g
 =
 V+
 \frac{N(N-1)}{2}\varepsilon^2
 \int_M\psi^2\,dV_g
 +
 O(\varepsilon^3).
\]
Therefore
\begin{equation}\label{eq:D-expansion-spectral}
 D(u_\varepsilon)
 =
 V^{2/N}
 +
 (N-1)V^{-1/n}\varepsilon^2
 \int_M\psi^2\,dV_g
 +
 O(\varepsilon^3).
\end{equation}
On the other hand,
\[
 \int_M|\nabla u_\varepsilon|^2\,dV_g
 =
 \varepsilon^2
 \int_M|\nabla\psi|^2\,dV_g,
\]
and
\[
 \int_Mu_\varepsilon^2\,dV_g
 =
 V+
 \varepsilon^2\int_M\psi^2\,dV_g.
\]
Hence the defect in \eqref{eq:sharp-compact} with
$B=V^{-1/n}$ is
\[
 \varepsilon^2
 \left(
 K_{2n}^2\int_M|\nabla\psi|^2\,dV_g
 -
 (N-2)V^{-1/n}\int_M\psi^2\,dV_g
 \right)
 +
 O(\varepsilon^3).
\]
Since
\[
 \frac{N-2}{K_{2n}^2}=\Lambda_n,
\]
condition \eqref{eq:spectral-test} makes this quantity negative for
all sufficiently small nonzero $\varepsilon$.  Thus
$V^{-1/n}$ is not admissible as the second constant in
\eqref{eq:sharp-compact}, and consequently
\[
 B_0(g)>V^{-1/n}.
\]
\end{proof}

\subsection{The critical value in the Chern deformation}

\begin{theorem}[An attained threshold in $t$]\label{thm:chern-threshold}
Assume that, on a fixed Hermitian background,
\begin{equation}\label{eq:affine-t}
 \mu^{1,t}=A+bt\quad\text{for all }t,\qquad A>0,\quad b\geq0,
\end{equation}
where $A,b$ are constants. Then
\begin{equation}\label{eq:tstar}
 t_*:=\frac{A}{\Lambda_nB_0(g)-b}\in(0,\ty).
\end{equation}
For $0<t\leq t_*$ one has $\lambda_{1,t}=t\Lambda_n$, whereas for $t>t_*$ one has $\lambda_{1,t}<t\Lambda_n$. The infimum is not attained for $t<t_*$ and is attained for every $t\geq t_*$. At $t_*$, its minimizers are exactly the nonzero extremals of \eqref{eq:sharp-compact}, up to taking the positive representative. All are nonconstant if $B_0(g)>V^{-1/n}$; constants are minimizers if $B_0(g)=V^{-1/n}$.

The map $t\mapsto\lambda_{1,t}$ is positive, nondecreasing, concave, and continuous on $(0,\infty)$. The ratio $\lambda_{1,t}/t$ is constant on $(0,t_*]$ and strictly decreasing on $[t_*,\infty)$. The number $t_*$ is unchanged by a constant rescaling of $g$.
\end{theorem}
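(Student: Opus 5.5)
The plan is to reduce everything to the optimal Sobolev inequality \eqref{eq:sharp-compact}, as in the proof of Theorem~\ref{thm:constant-r}, now with $t$ as the moving parameter instead of $r$. Since $\mu^{1,t}\equiv A+bt$ is constant and $a_t=t\Lambda_nK_{2n}^2$, I would first write the defect identity
\[
 \energy_{1,t}(u)-t\Lambda_nD(u)
 =t\Lambda_n\Bigl(K_{2n}^2\!\int|\nabla u|^2+B_0(g)\!\int u^2-D(u)\Bigr)
 +\bigl(A+bt-t\Lambda_nB_0(g)\bigr)\!\int u^2 .
\]
Arguing exactly as in Theorem~\ref{thm:constant-r}, saturation $\lambda_{1,t}=t\Lambda_n$ holds if and only if $A+bt\ge t\Lambda_nB_0(g)$. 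The same argument gives nonattainment when this inequality is strict. At equality the minimizers are exactly the nonzero extremals of \eqref{eq:sharp-compact}, taking $|u|$ and the maximum principle for positivity. Constants are extremals precisely when $B_0(g)=V^{-1/n}$, and this gives the constant/nonconstant dichotomy.

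\textbf{Main obstacle: the position of $t_*$.} The key is to show $\Lambda_nB_0(g)>b$ and $t_*<\ty$. At $t=\ty$ we have $\scal(g)=\mu^{1,\ty}=A+b\ty$ by \eqref{eq:special-t}, and $a_{\ty}$ is the classical Yamabe coefficient. Hence $\lambda_{1,\ty}$ is the classical Yamabe invariant of the Riemannian conformal class.
\begin{enumerate}
\item Suppose $A+b\ty\ge\ty\Lambda_nB_0(g)$. Then $\lambda_{1,\ty}=\ty\Lambda_n$, which is the round-sphere value.
\item By the equality case of the Yamabe problem (Aubin/Schoen, positive mass theorem), $(M,g)$ would then be conformal to the round $S^{2n}$.
\item Since conformal changes preserve Hermitian compatibility, $J$ would be an orthogonal complex structure for the round metric on $S^{2n}$. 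This is impossible for $n\ge2$ by the LeBrun/Salamon nonexistence results.
\end{enumerate}
Hence $A+b\ty<\ty\Lambda_nB_0(g)$, i.e.\ $\Lambda_nB_0(g)>b+A/\ty>b$. This makes $t_*$ well defined, with $t_*>0$ because $A>0$, and gives $t_*<\ty$. The step I expect to need the most care is the rigidity input in (ii)--(iii). My first attempt would instead use the weaker Hebey--Vaugon bound $B_0\ge\frac{n-1}{2(2n-1)}K_{2n}^2\max\scal$, which gives only $t_*\le\ty$, and then upgrade to strict inequality by the rigidity argument above.

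\textbf{Attainment.} For $t>t_*$ the inequality $\lambda_{1,t}<t\Lambda_n$ holds, and Theorem~\ref{thm:sharp-threshold} gives a minimizer. At $t=t_*$ I need an extremal of \eqref{eq:sharp-compact}. For this I would check that the Djadli--Druet condition \eqref{eq:DD} is equivalent to $\ty\Lambda_nB_0(g)>A+b\ty=\scal(g)$. This is exactly the strict inequality obtained in the previous paragraph, so an extremal exists and the endpoint is attained.

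\textbf{Qualitative properties in $t$.}
\begin{enumerate}
\item \emph{Positivity.} The potential $A+bt$ is positive, so $\lambda_{1,t}>0$.
\item \emph{Monotonicity.} $\energy_{1,t}(u)$ is nondecreasing in $t$, since $a_t$ and $bt$ both increase and $b\ge0$.
\item \emph{Concavity and continuity.} $\energy_{1,t}(u)/D(u)$ is affine in $t$, so its infimum $\lambda_{1,t}$ is concave, and a finite concave function is continuous.
\item \emph{Ratio on $(0,t_*]$.} There $\lambda_{1,t}/t=\Lambda_n$ is constant.
\item \emph{Ratio on $[t_*,\infty)$.} $\lambda_{1,t}/t$ is the infimum of
\[
 \Bigl(\Lambda_nK_{2n}^2\int|\nabla u|^2+(A/t+b)\int u^2\Bigr)\Big/D(u),
\]
which is nonincreasing in $t$. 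Take $t_*\le s<t$ and a minimizer $u_s$ at $s$, which exists by the previous paragraph. Since $A>0$ and $\int u_s^2>0$,
\[
 \lambda_{1,t}/t\le\energy_{1,t}(u_s)/(tD(u_s))<\lambda_{1,s}/s .
\]
\end{enumerate}

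\textbf{Scaling.} Under $g\mapsto cg$, the quantities $A$, $b$ and $\Lambda_nB_0(g)$ all scale by $c^{-1}$, because $\mu^{1,t}$ has conformal weight $-1$ and $B_0$ has the weight of $\int u^2/D(u)$. Hence $t_*$ is unchanged.
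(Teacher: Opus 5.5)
Your proposal is correct and follows essentially the same route as the paper. The common steps are the defect identity in $t$, the strict bound $\Lambda_nB_0(g)>A/\ty+b$ (from the equality case of the Yamabe theorem and the absence of orthogonal complex structures on round spheres), the Djadli--Druet criterion for attainment at $t_*$, and the affine-in-$t$ and minimizer-testing arguments for the qualitative properties. The only difference is cosmetic: you argue the rigidity step by contradiction from $A+b\ty\ge\ty\Lambda_nB_0(g)$, whereas the paper first cites the strict Yamabe inequality and then compares it with the sharp Sobolev inequality.
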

\begin{proof}
At $t=\ty$, \eqref{eq:affine-t} gives the positive constant scalar curvature $\scal(g)=A+b\ty$. A compact Hermitian manifold of complex dimension $n\geq2$ cannot be conformally equivalent to the round sphere. Indeed, the only even spheres admitting almost complex structures are $\mathbb S^2$ and $\mathbb S^6$, and the round six-sphere admits no orthogonal integrable complex structure; see \cite{LeBrun1987,Ferreira2018}. The strict form of the Yamabe theorem therefore gives $\lambda_{1,\ty}<\ty\Lambda_n$. If $B_0(g)\leq \scal(g)/(\ty\Lambda_n)$, the sharp Sobolev inequality would instead give $\energy_{1,\ty}\geq\ty\Lambda_nD$. Consequently
\begin{equation}\label{eq:B0-strict-chern}
 \Lambda_nB_0(g)>\frac{A}{\ty}+b.
\end{equation}
This proves \eqref{eq:tstar}, and also the strict hypothesis \eqref{eq:DD}. Thus the second Sobolev inequality has an extremal.

For every $t>0$,
\begin{align}\label{eq:t-defect}
 \energy_{1,t}(u)-t\Lambda_nD(u)
 &=t\Lambda_n\left(K_{2n}^2\int_M\abs{\nabla u}^2\frac{\omega^n}{n!}+B_0(g)\int_M u^2\frac{\omega^n}{n!}-D(u)\right)\notag\\
 &\quad+\left(A-t(\Lambda_nB_0(g)-b)\right)\int_M u^2\frac{\omega^n}{n!}.
\end{align}
The assertions about the levels and their attainment now follow exactly as in Theorem~\ref{thm:constant-r}, using optimality of $B_0(g)$ and Theorem~\ref{thm:sharp-threshold} for the strict range.

For each nonzero $u$, the quotient is an affine nondecreasing function of $t$, since its coefficient is $[2\norm{\nabla u}_2^2/(n-1)+b\norm u_2^2]/D(u)$. Its infimum is therefore nondecreasing and concave, hence continuous on $(0,\infty)$. Positivity follows from coercivity of $a_t\Delta+A+bt$ and Sobolev embedding. If $t_2>t_1\geq t_*$, testing at a minimizer $u$ for $t_1$ gives
\[
 \frac{\lambda_{1,t_2}}{t_2}
 \leq\frac{\lambda_{1,t_1}}{t_1}
       -A\left(\frac1{t_1}-\frac1{t_2}\right)\frac{\norm u_2^2}{D(u)}
 <\frac{\lambda_{1,t_1}}{t_1}.
\]
Finally, under $g\mapsto cg$, the constants $A,b,B_0$ all scale by $c^{-1}$, proving invariance of $t_*$.
\end{proof}

\begin{proposition}[Minimizing sequences below $t_*$]\label{prop:t-concentration}
Under the hypotheses of Theorem~\ref{thm:chern-threshold}, fix $0<t<t_*$. If $D(u_j)=1$ and $\energy_{1,t}(u_j)\to t\Lambda_n$, then
\[
 \norm{u_j}_2\to0,\qquad
 \int_M\abs{\nabla u_j}^2\frac{\omega^n}{n!}\to K_{2n}^{-2}.
\]
The sequence converges weakly to zero in $H^1$ and strongly to zero in $L^p$ for $1\leq p<N$, but not strongly in $L^N$.
\end{proposition}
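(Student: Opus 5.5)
Fix $0<t<t_*$ and a sequence with $D(u_j)=1$ and $\energy_{1,t}(u_j)\to t\Lambda_n$. The whole argument runs on the defect identity \eqref{eq:t-defect}. Write it as a sum of two terms. The first is $t\Lambda_n$ times the nonnegative Sobolev defect
\[
K_{2n}^2\|\nabla u_j\|_2^2+B_0(g)\|u_j\|_2^2-D(u_j)\ge0,
\]
which is nonnegative by \eqref{eq:sharp-compact}. The second is $\bigl(A-t(\Lambda_nB_0(g)-b)\bigr)\|u_j\|_2^2$. Since $t<t_*$, the coefficient
\[
\kappa_t:=A-t(\Lambda_nB_0(g)-b)
\]
is strictly positive; this uses $\Lambda_nB_0(g)-b>0$, established in the proof of Theorem~\ref{thm:chern-threshold}. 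The left side of the identity tends to zero, and both summands are nonnegative. Hence
\[
\kappa_t\|u_j\|_2^2\to0 \quad\text{and}\quad K_{2n}^2\|\nabla u_j\|_2^2+B_0(g)\|u_j\|_2^2-1\to0.
\]
The first limit gives $\|u_j\|_2\to0$. Feeding this into the second gives $K_{2n}^2\|\nabla u_j\|_2^2\to1$, i.e.\ $\|\nabla u_j\|_2^2\to K_{2n}^{-2}$.

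For the convergence statements, the sequence is bounded in $H^1$, since $\|\nabla u_j\|_2$ converges and $\|u_j\|_2\to0$. Every weak $H^1$ subsequential limit is also an $L^2$ limit by Rellich, hence equals $0$. So the full sequence converges weakly to zero in $H^1$. For $1\le p<N$ I would use the compact embedding $H^1\hookrightarrow L^p$; alternatively, interpolate between $\|u_j\|_2\to0$ and the uniform bound $\|u_j\|_N=1$ (for $p\le2$ use Hölder on the compact manifold). Either route gives $u_j\to0$ in $L^p$. Strong $L^N$ convergence to zero is impossible, because $\|u_j\|_N=D(u_j)^{1/2}=1$ for all $j$. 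Since the weak limit is zero, no other strong $L^N$ limit can occur either.

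I do not expect a real obstacle. The one point to check carefully is that $\kappa_t>0$ strictly, which is exactly the statement $t<t_*$ together with positivity of the denominator of $t_*$. Also, \eqref{eq:t-defect} should be applied to $u_j$ possibly of changing sign, but it is an identity valid for all $u\in H^1$, so no positivity of $u_j$ is needed.
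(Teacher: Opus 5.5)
Your proposal is correct and follows the paper's own argument: both use the defect identity \eqref{eq:t-defect}, with a nonnegative Sobolev defect plus a strictly positive multiple of $\|u_j\|_2^2$ for $t<t_*$, to get $\|u_j\|_2\to0$ and the gradient limit. Both then get the remaining convergence statements from $H^1$-boundedness, compact embeddings, and the normalization $\|u_j\|_N=1$. You also spell out the detail the paper leaves implicit, namely that $\Lambda_nB_0(g)-b>0$ is what makes $t<t_*$ equivalent to $\kappa_t>0$.
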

\begin{proof}
The coefficient of $\int u^2$ in \eqref{eq:t-defect} is strictly positive, and the Sobolev defect is nonnegative. Thus $\norm{u_j}_2\to0$. The energy identity gives the gradient limit. Boundedness in $H^1$, compact Sobolev embeddings, and the fixed $L^N$ normalization give the remaining conclusions.
\end{proof}

\section{Examples}\label{sec:examples}

\subsection{Iwasawa products and their finite covers}
Let $G=\C^3$ with complex Heisenberg multiplication
\[
 (z_1,z_2,z_3)(w_1,w_2,w_3)
 =(z_1+w_1,z_2+w_2,z_3+w_3+z_1w_2).
\]
Let $\Gamma$ be the subgroup with Gaussian-integer coordinates and $X=\Gamma\backslash G$. The left-invariant holomorphic coframe
\[
 \varphi^1=\dd z_1,\quad \varphi^2=\dd z_2,\quad
 \varphi^3=\dd z_3-z_1\dd z_2
\]
satisfies $\dd\varphi^1=\dd\varphi^2=0$ and $\dd\varphi^3=-\varphi^1\wedge\varphi^2$. Equip $X$ with
\[
 \omega_X=\frac{\sqrt{-1}}2\sum_{j=1}^3\varphi^j\wedge\bar\varphi^j.
\]
For $n\geq3$, take its product with a flat complex torus of dimension $n-3$; for $n=3$ this factor is a point. The product metric is balanced and non-K\"ahler. Direct computations give
\begin{equation}\label{eq:iwasawa-invariants}
 \scal(g)=-2,\qquad \scal^\Ch=0,\qquad W=\abs T^2=8,
 \qquad \mu^{r,t}=2(t-2n)-4(n-1)(r-1).
\end{equation}

\begin{corollary}[An attained non-Chern endpoint]\label{cor:iwasawa-endpoint}
On these Iwasawa products, for every $t>0$ $\mathscr S_t([\omega])$ has endpoint
\begin{equation}\label{eq:iwasawa-rsharp}
 r_\sharp(t)=1+\frac{2(t-2n)-t\Lambda_nB_0(g)}{4(n-1)},
\end{equation}
and its endpoint is attained. In particular, at $t=\ty$,
\[
 r_\sharp(\ty)=1-\frac{2+\ty\Lambda_nB_0(g)}{4(n-1)}<1.
\]
Thus a non-locally-conformally-K\"ahler class attains the spherical level for a connection different from the Chern connection.
\end{corollary}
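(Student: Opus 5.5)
The plan is to apply Theorem~\ref{thm:constant-r} directly, since on these Iwasawa products both potentials are constant. By \eqref{eq:iwasawa-invariants}, $W\equiv w=8>0$. Setting $r=1$ in the formula for $\mu^{r,t}$ gives $\mu^{1,t}\equiv C_t=2(t-2n)$. All hypotheses of Theorem~\ref{thm:constant-r} therefore hold, with $c_n=(n-1)/2$, so $c_nw=4(n-1)$. Substituting into \eqref{eq:constant-rsharp} yields
\[
 r_\sharp=1+\frac{2(t-2n)-t\Lambda_nB_0(g)}{4(n-1)},
\]
which is \eqref{eq:iwasawa-rsharp}. The same theorem gives $\lambda_{r,t}=t\Lambda_n$ exactly for $r\le r_\sharp$. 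In particular $\mathscr S_t([\omega])$ is nonempty for every $t>0$, and the interval description of Theorem~\ref{thm:saturation-halfline} applies.

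For attainment at $r_\sharp$, Theorem~\ref{thm:constant-r} says the endpoint is attained precisely when \eqref{eq:sharp-compact} has a nonzero extremal. It also guarantees such an extremal whenever $\scal(g)\le0$, via the Djadli--Druet criterion \eqref{eq:DD}. Here $\scal(g)=-2<0$, so the right side of \eqref{eq:DD} is negative. Since $B_0(g)\ge\Vol^{-1/n}>0$ by \eqref{eq:B0-volume}, \eqref{eq:DD} holds strictly and the endpoint is attained for every $t>0$. The step that needs care is only the input data: that the metric has constant $\scal(g)=-2$, and that $\mu^{1,t}$ is constant. This requires $\dstar\theta=0$, which holds because the metric is balanced ($\theta=0$). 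The rest is bookkeeping, and I take \eqref{eq:iwasawa-invariants} as given.

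Finally, at $t=\ty=2n-1$ we have $2(\ty-2n)=-2$, giving $r_\sharp(\ty)=1-\bigl(2+\ty\Lambda_nB_0(g)\bigr)/(4(n-1))$. This is strictly less than $1$ because $B_0(g)>0$. Hence the attaining parameter $r_\sharp(\ty)$ is not the Chern value $r=1$. The class is non-LCK because $W\equiv8\neq0$, using Proposition~\ref{prop:LCK-boundary} for $n\ge3$. This proves the last assertion.
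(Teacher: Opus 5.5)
Your proposal is correct and follows the paper's own proof. Both apply Theorem~\ref{thm:constant-r} with $w=8$ and $C_t=2(t-2n)$, and both get attainment from the Djadli--Druet criterion \eqref{eq:DD} because $\scal(g)=-2<0<B_0(g)$; your evaluation at $t=\ty$ and the non-LCK check via $W\equiv 8$ and Proposition~\ref{prop:LCK-boundary} just write out steps the paper leaves implicit.
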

\begin{proof}
Apply Theorem~\ref{thm:constant-r}. The Djadli--Druet strict inequality holds since $\scal(g)=-2$ and $B_0(g)>0$.
\end{proof}

For a positive integer $L$, let $\Gamma_L\subset\Gamma$ be the subgroup with $z_1\in L\Z[\sqrt{-1}]$. It has index $L^2$. Let $M_L$ be the corresponding Iwasawa cover, with the same fixed flat torus factor and the lifted metric $g_L$. Write $V_L=\Vol(M_L,g_L)=L^2V_1$.

\begin{corollary}[Nonconstant critical minimizers on covers]\label{cor:covers}
For all sufficiently large $L$, every minimizer at $r_{\sharp, L}(t)$ on $(M_L,g_L)$ is nonconstant, for every $t>0$.
\end{corollary}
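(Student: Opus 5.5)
By Theorem~\ref{thm:constant-r}, applied on $(M_L,g_L)$ where the invariants \eqref{eq:iwasawa-invariants} still hold (the metric is locally isometric to the original, so $W\equiv8$ and $\mu^{1,t}$ is the same constant), the endpoint minimizers at $r_{\sharp,L}(t)$ are exactly the nonzero extremals of the optimal Sobolev inequality \eqref{eq:sharp-compact} for $g_L$. The same theorem says that every endpoint minimizer is nonconstant as soon as $B_0(g_L)>V_L^{-1/n}$. This condition depends only on $(M_L,g_L)$ and not on $t$, so the claim holds for every $t>0$ simultaneously. The plan therefore reduces to showing
\[
 B_0(g_L)>V_L^{-1/n}
\]
for all large $L$.

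To prove that inequality I will use the spectral test of Lemma~\ref{lem:spectral}. It suffices to produce a smooth zero-mean $\psi_L$ on $M_L$ whose Rayleigh quotient is strictly less than $\Lambda_nV_L^{-1/n}=\Lambda_n L^{-2/n}V_1^{-1/n}$. The natural choice is a function pulled back from the base torus of the Heisenberg fibration in the $z_1$ direction. I take
\[
 \psi_L=\cos\!\Bigl(\tfrac{2\pi}{L}\operatorname{Re}z_1\Bigr).
\]
This is well defined on $M_L$ because $\Gamma_L$ acts on $z_1$ by translations in $L\Z[\sqrt{-1}]$, and the left action shifts $z_1$ additively. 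It has zero mean, since $M_L$ fibres over the $z_1$-torus $\C/L\Z[\sqrt{-1}]$ with fibres of constant volume.

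Next I compute the Rayleigh quotient. The metric is $\tfrac12\sum|\varphi^j|^2$ with $\varphi^1=\dd z_1$, and $\psi_L$ depends only on $z_1$. Hence $|\nabla\psi_L|^2$ equals a fixed normalization constant $c_0$ times $(2\pi/L)^2\sin^2(\cdot)$. Integrating over the fibration gives
\[
 \frac{\int|\nabla\psi_L|^2}{\int\psi_L^2}=\frac{c_0(2\pi)^2}{L^2}.
\]
The quotient thus decays like $L^{-2}$, while the threshold decays only like $L^{-2/n}$. Since $n\ge3>1$, we have $L^{-2}\ll L^{-2/n}$ for large $L$, so \eqref{eq:spectral-test} holds for all sufficiently large $L$. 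Lemma~\ref{lem:spectral} then gives $B_0(g_L)>V_L^{-1/n}$, and Theorem~\ref{thm:constant-r} finishes the argument.

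The main obstacle is checking that $\psi_L$ is genuinely a smooth function on the quotient, and that its gradient norm in the coframe normalization is as claimed. Both amount to verifying that $\operatorname{Re}z_1$ descends modulo $L\Z$ under $\Gamma_L$; this follows because the group law leaves the first coordinate additive. Only the exponent comparison matters, so the constant $c_0$ need not be computed.
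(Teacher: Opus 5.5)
Your proposal is correct and follows essentially the paper's route: the same test function $\psi_L=\cos(2\pi\operatorname{Re}z_1/L)$, Lemma~\ref{lem:spectral} to get $B_0(g_L)>V_L^{-1/n}$ (a condition independent of $t$), and Theorem~\ref{thm:constant-r} to exclude constant endpoint minimizers. The only differences are minor: the paper computes the eigenvalue exactly, $\Delta_{g_L}\psi_L=4\pi^2L^{-2}\psi_L$ (so your $c_0=1$), and obtains zero mean by integrating that equation rather than by your fibration argument; it also cites Corollary~\ref{cor:iwasawa-endpoint} to show the endpoint is actually attained, so the statement is not vacuous.
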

\begin{proof}
The function $\psi_L=\cos(2\pi\operatorname{Re}z_1/L)$ is globally defined and satisfies
\[
 \Delta_{g_L}\psi_L=\frac{4\pi^2}{L^2}\psi_L.
\]
Its integral is zero by integration of this eigenvalue equation. Since
\[
 \frac{4\pi^2}{L^2}<\Lambda_n(L^2V_1)^{-1/n}
\]
for sufficiently large $L$, Lemma~\ref{lem:spectral} gives $B_0(g_L)>V_L^{-1/n}$. The endpoint is attained by Corollary~\ref{cor:iwasawa-endpoint}, and Theorem~\ref{thm:constant-r} rules out constant minimizers.
\end{proof}

\begin{remark} A lower bound on $B_0(g_L)$ yields an upper bound on $r_{\sharp, L}(t)$. These are some explicit numerical bounds for the $L=2$ Iwasawa cover for some values of $t$:
\begin{enumerate}
    \item at $t = 5$,
    \[r_{\sharp, 2}(5) = 1-\frac{2+5\Lambda_3B_0(g_2)}{8} \quad \text{and} \quad r_{\sharp, 2}(5) \leq -6.8993. \]
 \item at $t = 1$,
    \[r_{\sharp, 2}(1) = 1-\frac{10+\Lambda_3B_0(g_2)}{8} \quad \text{and} \quad r_{\sharp, 2}(1) \leq -1.7799. \]
\end{enumerate}
\end{remark}

\begin{remark}
The original invariant metric in \eqref{eq:iwasawa-invariants} has constant deformed curvature for every parameter. This alone says nothing about whether it minimizes the normalized total curvature. Corollary~\ref{cor:covers} shows that at the critical endpoint all minimizing metrics on the indicated covers require a nonconstant conformal factor.
\end{remark}

\subsection{Fubini--Study metrics}
On $\CP^n$, $n\geq2$, use
\[
 \omega_{\mathrm{FS}}=\sqrt{-1}\,\partial\bar\partial\log(1+\abs z^2).
\]
Then $W=0$, $\scal(g)=2n(n+1)$, and $V=(2\pi)^n/n!$. Consequently Theorem~\ref{thm:chern-threshold} applies with $A=2n(n+1)$ and $b=0$. All connections have the same threshold
\begin{equation}\label{eq:FS-threshold}
 t_{*,\mathrm{FS}}=\frac{2n(n+1)}{\Lambda_nB_0(g_{\mathrm{FS}})}\in(0,\ty).
\end{equation}
The spherical level is attained at this value and is unattained for smaller positive $t$.

For $t\geq\ty$, the invariant can be evaluated independently of $B_0$:
\begin{equation}\label{eq:FS-value}
 \lambda_{r,t}=C_n:=\frac{4\pi n(n+1)}{(n!)^{1/n}}.
\end{equation}
Indeed, the Fubini--Study metric is Einstein and is a Yamabe minimizer by the Yamabe theorem and Obata's rigidity theorem \cite{Obata1971}. Thus the quotient at $\ty$ is bounded below by $\scal(g)V^{1/n}=C_n$. Increasing $t$ only increases its gradient term, while constants still give $C_n$. No formula $\lambda_{r,t}=\min\{t\Lambda_n,C_n\}$ is asserted in the intermediate range; such a formula would require additional information about $B_0(g_{\mathrm{FS}})$ and its extremals.

\subsection{Nonconstant critical minimizers on Hopf manifolds}
Let
\[
 H^n_\ell=(\C^n\setminus\{0\})/\langle z\mapsto e^{-\ell}z\rangle,
 \qquad \ell>0,
\]
and put $s=\abs z^2$. For $a>0$, the form
\begin{equation}\label{eq:Hopf-metric}
 \omega_a=\frac{\sqrt{-1}}{a s^a}\partial\bar\partial s^a
 =\sqrt{-1}\left(\frac{\delta_{ij}}s+
              (a-1)\frac{\bar z_i z_j}{s^2}\right)\dd z^i\wedge\dd\bar z^j
\end{equation}
is positive and descends to $H^n_\ell$. It is locally conformally K\"ahler and has parallel Lee form. Direct calculations give
\begin{align}
 \scal(g_a)&=(n-1)(2n-a),\qquad
 \Vol(H^n_\ell,g_a)=\frac{2^{n+1}a\ell\pi^n}{(n-1)!},\label{eq:Hopf-data}\\
 \mu^{r,t}(\omega_a)&=(n-1)\bigl(2n(1-a)+at\bigr).\label{eq:Hopf-mu}
\end{align}
For $0<a<1$, Theorem~\ref{thm:chern-threshold} gives an attained threshold, independent of $r$,
\begin{equation}\label{eq:Hopf-tstar}
 t_*(a,\ell)=\frac{2n(n-1)(1-a)}{\Lambda_nB_0(g_a)-a(n-1)}.
\end{equation}

\begin{theorem}[Hopf example with nonconstant minimizers]\label{thm:Hopf}
For each $n\geq2$, set
\begin{equation}\label{eq:Hopf-period}
 a=\frac12,\qquad
 \ell_n=\frac{(n-1)!\Vol(\mathbb S^{2n})}{2\pi^n}
        =\frac{2^{2n}}{n\binom{2n}{n}}.
\end{equation}
On the resulting Hopf manifold, for every $r$ the spherical level is attained at
\begin{equation}\label{eq:Hopf-tau}
t_{*,n} =\frac{n(n-1)}{\Lambda_nB_0(g_{1/2})-(n-1)/2},
\end{equation}
and every minimizing conformal factor at $t_{*,n}$ is nonconstant. Moreover,
\begin{equation}\label{eq:Hopf-tau-bound}
 0<t_{*,n}<\frac{n(n-1)}{n2^{1/n}-(n-1)/2}<2n-1.
\end{equation}
The underlying complex manifold admits no K\"ahler metric.
\end{theorem}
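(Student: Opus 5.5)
The plan is to reduce everything to Theorem~\ref{thm:chern-threshold} and Lemma~\ref{lem:spectral}. First, $\omega_{1/2}$ is LCK. For $n\ge3$, Proposition~\ref{prop:LCK-boundary} then gives $W\equiv0$, and for $n=2$ we have $W\equiv0$ algebraically. Hence $\mu^{r,t}=\mu^{1,t}$ by \eqref{eq:exact-reduction}, and the whole problem is independent of $r$. Next, \eqref{eq:Hopf-mu} with $a=\tfrac12$ reads
\[
\mu^{1,t}=(n-1)\bigl(n+\tfrac t2\bigr)=A+bt,\qquad A=n(n-1)>0,\quad b=\tfrac{n-1}{2}\ge0 .
\]
So Theorem~\ref{thm:chern-threshold} applies directly. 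It gives attainment at $t_{*,n}=A/(\Lambda_nB_0(g_{1/2})-b)$, which is \eqref{eq:Hopf-tau}, and it gives $t_{*,n}\in(0,2n-1)$. It also shows that the minimizers at $t_{*,n}$ are exactly the extremals of \eqref{eq:sharp-compact}. These are all nonconstant as soon as $B_0(g_{1/2})>V^{-1/n}$. Non-K\"ahlerness is classical: $H^n_\ell\cong S^1\times S^{2n-1}$ has $b_1=1$, which is odd, or alternatively $b_2=0$ for $n\ge2$.

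The role of $\ell_n$ is to normalize the volume. By \eqref{eq:Hopf-data} with $a=\tfrac12$,
\[
V=\frac{2^n\ell_n\pi^n}{(n-1)!}=2^{n-1}\Vol(\mathbb S^{2n}),
\qquad
\Lambda_nV^{-1/n}=\frac{2n\Vol(\mathbb S^{2n})^{1/n}}{2^{(n-1)/n}\Vol(\mathbb S^{2n})^{1/n}}=n2^{1/n}.
\]
I also need the closed form $\ell_n=2^{2n}/(n\binom{2n}{n})$, which follows from $\Vol(\mathbb S^{2n})=2^{2n+1}\pi^n n!/(2n)!$. Once strictness $B_0>V^{-1/n}$ is known, we get $\Lambda_nB_0>n2^{1/n}$, and this gives the middle inequality of \eqref{eq:Hopf-tau-bound}. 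The last inequality, $n(n-1)<(2n-1)(n2^{1/n}-\tfrac{n-1}{2})$, is elementary. Since $2^{1/n}>1$, it suffices to check $n(n-1)\le(2n-1)\tfrac{n+1}{2}$, i.e.\ $2n^2-2n\le 2n^2+n-1$, which holds for $n\ge1$.

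The main obstacle is the strict inequality $B_0(g_{1/2})>V^{-1/n}$, i.e.\ that constants are not Sobolev extremals. I would first try Lemma~\ref{lem:spectral}, which requires exhibiting a mean-zero $\psi$ whose Rayleigh quotient is below $\Lambda_nV^{-1/n}=n2^{1/n}$. I would test two candidates.

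\begin{enumerate}
\item The radial mode $\psi=\cos(2\pi\log\abs z/\ell_n)$. It is well defined on the quotient and depends only on the Lee direction, in which $g_a$ has length factor governed by $a$.
\item The restrictions of $\operatorname{Re}z_1/\abs z$ to the Berger-type sphere fibres, whose Hopf circle is shrunk by $a$.
\end{enumerate}

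Computing both quotients requires pinning down the real-metric normalization, $g=2\operatorname{Re}h$ versus $\operatorname{Re}h$. A rough count suggests that both quotients may be of order $2n$, or even larger for the radial mode, which would defeat the direct test.

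If the spectral test fails, I would fall back on the Yamabe argument already used in Theorem~\ref{thm:chern-threshold}. Suppose $B_0=V^{-1/n}$. Then constants would be extremal, and the quotient at $t=2n-1$ would equal $\scal(g_{1/2})V^{1/n}$. I would compare this with a competitor built from the quotient, for instance a conformal product metric on $S^1\times S^{2n-1}$ with its long circle. The choice of $\ell_n$, making $V$ exactly $2^{n-1}\Vol(\mathbb S^{2n})$, is presumably designed to make such a comparison sharp. A second-variation computation at the constant function would then exhibit a negative direction, which contradicts extremality of constants.
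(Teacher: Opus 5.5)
Your reduction matches the paper's own, and the peripheral parts are all correct:
\begin{enumerate}
\item $W\equiv0$, so everything is independent of $r$.
\item $\mu^{1,t}=A+bt$ with $A=n(n-1)$ and $b=(n-1)/2$.
\item Theorem~\ref{thm:chern-threshold} gives attainment at $t_{*,n}$ and identifies the minimizers with the extremals of \eqref{eq:sharp-compact}.
\item $V=2^{n-1}\Vol(\mathbb S^{2n})$, hence $\Lambda_nV^{-1/n}=n2^{1/n}$.
\item Your checks of the closed form of $\ell_n$, of the last inequality in \eqref{eq:Hopf-tau-bound}, and of non-K\"ahlerness are fine.
\end{enumerate}

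The gap is the one step that carries the content of the theorem: the strict inequality $B_0(g_{1/2})>V^{-1/n}$. Both the nonconstancy of every minimizer and the strict middle inequality in \eqref{eq:Hopf-tau-bound} depend on it. Your second candidate is exactly the paper's test function $\psi=\operatorname{Re}(z_1/\abs z)$. But you never compute its Rayleigh quotient, and your guess that it is of order $2n$, which would exceed $n2^{1/n}$, is wrong.

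The fallback does not rescue this.
\begin{enumerate}
\item A second-variation computation of the Sobolev defect at the constant function is literally Lemma~\ref{lem:spectral}, so it needs the same test function.
\item The second variation of the Yamabe quotient at $g_{1/2}$ is negative only along functions with Rayleigh quotient below $\scal(g_{1/2})/(2n-1)=(n-1)(2n-\tfrac12)/(2n-1)<n$. Neither of your candidates achieves that.
\item The ``competitor'' metric is never specified.
\end{enumerate}

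The missing computation is short. Use the paper's convention $g=2\operatorname{Re}(h_{i\bar j}\,dz^i\,d\bar z^j)$, which is the one in which \eqref{eq:Hopf-data} holds. Write $z=e^{\tau}\sigma$ with $\sigma\in\mathbb S^{2n-1}$ and $\tau\in\R/\ell\Z$. Let $\alpha=\operatorname{Im}\bigl(\sum_i\bar\sigma_i\,d\sigma_i\bigr)$ be the Hopf contact form and $\xi$ the unit Hopf field, so that $g_{\mathbb S^{2n-1}}=g_H+\alpha^2$. Since $\sum_i\bar z_i\,dz^i=e^{2\tau}(d\tau+\sqrt{-1}\,\alpha)$ and $\abs{dz}^2=e^{2\tau}(d\tau^2+g_{\mathbb S^{2n-1}})$, formula \eqref{eq:Hopf-metric} gives
\[
 g_a=2\bigl(a\,d\tau^2+g_H+a\,\alpha^2\bigr).
\]
This is a Riemannian product of a circle with a Berger sphere. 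On functions of $\sigma$ alone,
\[
 \Delta_{g_a}=\tfrac12\bigl(\Delta_{\mathbb S^{2n-1}}+(a^{-1}-1)(-\xi^2)\bigr).
\]
The function $\psi=\operatorname{Re}\sigma_1$ satisfies $\Delta_{\mathbb S^{2n-1}}\psi=(2n-1)\psi$. Since $\xi\sigma_1=\sqrt{-1}\,\sigma_1$, it also satisfies $-\xi^2\psi=\psi$. Hence $\Delta_{g_a}\psi=\tfrac12(2n-2+a^{-1})\psi$, so $\Delta_{g_{1/2}}\psi=n\psi$, and $\psi$ has mean zero. Because $n<n2^{1/n}=\Lambda_nV^{-1/n}$, Lemma~\ref{lem:spectral} yields $B_0(g_{1/2})>V^{-1/n}$. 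The rest of your argument then goes through unchanged.

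Your normalization worry is harmless. The test compares the scale-invariant number $\lambda V^{1/n}$ with $\Lambda_n$, so only consistency between the volume and the Laplacian matters. The role of $\ell_n$ is simply to place $\Lambda_nV^{-1/n}$ above this eigenvalue; any $\ell<2\ell_n$ would do. It is not there to calibrate a Yamabe comparison. You are right that the radial mode fails: its eigenvalue $(2\pi/\ell_n)^2$ is far larger.
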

\begin{proof}
The chosen volume is $V_n=2^{n-1}\Vol(\mathbb S^{2n})$. The globally defined function $\psi(z)=\operatorname{Re}(z_1/\abs z)$ satisfies $\Delta_{g_{1/2}}\psi=n\psi$. Since
\[
 n<\Lambda_nV_n^{-1/n}=n2^{1/n},
\]
Lemma~\ref{lem:spectral} gives $B_0(g_{1/2})>V_n^{-1/n}$. Theorem~\ref{thm:chern-threshold} proves attainment and nonconstancy. This strict lower bound for $B_0$ yields the middle upper bound in \eqref{eq:Hopf-tau-bound}. Its comparison with $2n-1$ follows, for example, from $2^{1/n}>1$ and
\[
 n(n-1)<\frac{(2n-1)(n+1)}2.
\]
Finally, $H^n_{\ell_n}$ is diffeomorphic to $\mathbb S^1\times\mathbb S^{2n-1}$, whose first Betti number is one, whereas a compact K\"ahler manifold has an even first Betti number.
\end{proof}

\begin{remark} These are some explicit numerical bounds for $t_{*,n}$:
\begin{enumerate}
    \item For $n = 2,$ $\ell_2 = \frac{4}{3}.$ And so, on $H^2_{\frac{4}{3}}$,
    \[t_{*,2} =\frac{4}{2\Lambda_nB_0(g_{1/2})-1} \quad \text{and} \quad 0 < t_{*,2} \leq 0.8351. \]

    \item For $n = 3,$ $\ell_2 = \frac{16}{15}.$ And so, on $H^3_{\frac{16}{15}}$,
    \[t_{*,3} =\frac{6}{\Lambda_nB_0(g_{1/2})-1} \quad \text{and} \quad 0 < t_{*,3} \leq 2.1391.\]
\end{enumerate}
\end{remark}

%These examples establish equality and attainment at the Chern spherical threshold in every complex dimension at least two, and show that the minimizing conformal factor need not be constant. They also illustrate the two different roles of Hermitian torsion: the Lee component influences the deformation in $t$, whereas the trace-free component is required for a nontrivial dependence on $r$.

\section{Prescribed deformed scalar curvature}\label{sec:prescribed}
We now fix $r\in\R$ and $t>0$ and consider
\begin{equation}\label{eq:prescribed-PDE}
L_{r,t}u
:=a_t\Delta_\omega u+\mu^{r,t}(\omega)u
=f u^{N-1},
\qquad u>0.
\end{equation}
By Proposition \ref{prop:conformal-law}, a positive solution is equivalent to a conformal metric with prescribed $\mu^{r,t}$-curvature $f$.

When $\lambda_{r,t}([\omega])\leq 0$, Theorem \ref{thm:sharp-threshold} supplies a conformal representative with non-positive constant $\mu^{r,t}$. We restrict to $\lambda_{r,t}([\omega]) > 0$, where a positive constant $\mu^{r,t}$ representative is not assumed. We therefore assume throughout this section that
\begin{equation}\label{eq:positive-class}
\lambda_{r,t}([\omega])>0.
\end{equation}

\subsection{The weighted variational problem}

A first necessary condition follows by testing \eqref{eq:prescribed-PDE} against $u$.

\begin{lemma}\label{lem:necessary-positive}
Under \eqref{eq:positive-class}, if \eqref{eq:prescribed-PDE} has a positive solution, then $\sup_M f>0$.
\end{lemma}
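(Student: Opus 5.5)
Test the equation \eqref{eq:prescribed-PDE} against the solution $u$ itself and use the positivity of the class. Let $u>0$ be a smooth solution. Multiply \eqref{eq:prescribed-PDE} by $u$ and integrate over $M$ with respect to $\omega^n/n!$. Integration by parts, with $\Delta=\dstar\dd$ nonnegative, gives
\[
\energy_{r,t}(u)=\int_M\left(a_t\abs{\nabla u}^2+\mu^{r,t}(\omega)u^2\right)\frac{\omega^n}{n!}
=\int_M f\,u^{N}\,\frac{\omega^n}{n!}.
\]
The left-hand side is bounded below by the definition \eqref{eq:QD2} of the invariant:
\[
\energy_{r,t}(u)\ge\lambda_{r,t}([\omega])\,D(u).
\]
Since $u>0$, we have $D(u)>0$. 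Under \eqref{eq:positive-class} this makes $\energy_{r,t}(u)$ strictly positive. Alternatively, Lemma~\ref{lem:coercivity} gives $\energy_{r,t}(u)\ge C\norm u_{H^1}^2>0$ directly.

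Hence $\int_M f u^N\,\omega^n/n!>0$. If instead $\sup_M f\le0$, the integrand $fu^N$ would be pointwise nonpositive, because $u^N>0$. The integral would then be $\le0$, a contradiction. Therefore $\sup_M f>0$.

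The only point requiring care is the regularity needed to justify the integration by parts. This is routine: a positive $H^1$ weak solution of the critical equation with smooth coefficients is smooth, by the standard Trudinger bootstrap already used for Theorem~\ref{thm:sharp-threshold}. If one works with weak solutions, the identity above is simply the weak formulation tested with $u\in H^1$ itself, so no further regularity is needed.
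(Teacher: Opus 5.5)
Your proof is correct and is essentially the paper's argument: testing \eqref{eq:prescribed-PDE} against $u$ gives $\energy_{r,t}(u)=\int_M f u^N\,\mathrm dV_\omega$, and the left side is at least $\lambda_{r,t}\norm{u}_{L^N}^2>0$, which forces $f$ to be positive somewhere. Your remarks on the alternative coercivity bound and on regularity are fine, but they are not needed for the argument.
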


\begin{proof}
If $u>0$ solves \eqref{eq:prescribed-PDE}, then
\[
\energy_{r,t}(u)=\int_M f u^N\,\mathrm dV_\omega.
\]
The left-hand side is at least $\lambda_{r,t}\norm{u}_{L^N}^2>0$, so $f$ must be positive somewhere.
\end{proof}

Assume henceforth that $\sup_M f>0$. Define
\begin{equation}\label{eq:Af}
\mathcal A_f
:=
\left\{
v\in H^1(M):
v\ge0,
\quad
\int_M f v^N\,\mathrm dV_\omega=1
\right\}
\end{equation}
and
\begin{equation}\label{eq:lambda-f}
\lambda_f^{r,t}
:=\inf_{v\in\mathcal A_f}\energy_{r,t}(v).
\end{equation}
The set $\mathcal A_f$ is nonempty by choosing a nonzero test function supported in $\{f>0\}$. In particular,
\begin{equation}\label{eq:lambda-f-lower}
\lambda_f^{r,t}
\ge
\frac{\lambda_{r,t}}{(\sup_M f)^{2/N}}>0,
\end{equation}
because $1=\int f v^N\le(\sup f)\norm{v}_{L^N}^N$ for $v\in\mathcal A_f$.

\begin{theorem}[Weighted threshold]\label{thm:weighted-threshold}
Assume \eqref{eq:positive-class} and let $f\in C^\infty(M)$ satisfy $\sup_M f>0$. If
\begin{equation}\label{eq:weighted-condition}
\lambda_f^{r,t}
<
\frac{t\Lambda_n}{(\sup_M f)^{2/N}},
\end{equation}
then there exists a smooth positive solution of \eqref{eq:prescribed-PDE}.
\end{theorem}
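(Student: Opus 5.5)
This is the classical Aubin-type argument for prescribed scalar curvature with a positive operator. Take a minimizing sequence in $\mathcal A_f$, show that no concentration can occur below the level $t\Lambda_n(\sup f)^{-2/N}$, and pass to the limit. The Euler--Lagrange equation then gives a positive solution after rescaling.

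\textbf{Step 1: bounded minimizing sequence.} Let $(v_j)\subset\mathcal A_f$ with $\energy_{r,t}(v_j)\to\lambda_f^{r,t}$. Lemma~\ref{lem:coercivity} applies because of \eqref{eq:positive-class}, so $(v_j)$ is bounded in $H^1$. Passing to a subsequence, we get $v_j\rightharpoonup v$ weakly in $H^1$, strongly in $L^2$, and a.e. Also $v\ge0$.

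\textbf{Step 2: excluding vanishing and concentration.} Set $w_j=v_j-v$. By Brezis--Lieb, applied with the bounded weight $f$,
\[
1=\int f v_j^N=\int f v^N+\int f w_j^N+o(1).
\]
Weak convergence and the strong $L^2$ convergence give
\[
\energy_{r,t}(v_j)=\energy_{r,t}(v)+a_t\|\nabla w_j\|_2^2+o(1).
\]
Now use the sharp inequality \eqref{eq:sharp-compact} on $w_j$. Its $L^2$ term tends to zero, and by \eqref{eq:at-K} we have $a_t=t\Lambda_nK_{2n}^2$. This yields
\[
a_t\|\nabla w_j\|_2^2\ge t\Lambda_n\|w_j\|_N^2-o(1)\ge t\Lambda_n(\sup f)^{-2/N}\Bigl(\int f w_j^N\Bigr)_+^{2/N}-o(1).
\]
Write $\alpha=\int fv^N$ and $\beta=\lim\int f w_j^N=1-\alpha$. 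If $\alpha>0$, homogeneity gives $\energy_{r,t}(v)\ge\lambda_f^{r,t}\alpha^{2/N}$. If $\alpha\le0$, then $\energy_{r,t}(v)\ge0$ by positivity of the form, and the estimate holds trivially with $\alpha_+$. Combining these,
\[
\lambda_f^{r,t}\ge\lambda_f^{r,t}\,\alpha_+^{2/N}+t\Lambda_n(\sup f)^{-2/N}\beta_+^{2/N}.
\]
Because $2/N<1$, the map $x\mapsto x^{2/N}$ is strictly concave. Together with the hypothesis \eqref{eq:weighted-condition}, the only possibility is $\beta\le0$ and $\alpha\ge1$. A little care is needed when $\beta<0$, where $\alpha>1$. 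The right-hand side is then at least $\lambda_f^{r,t}\alpha^{2/N}>\lambda_f^{r,t}$, which is impossible. Hence $\alpha=1$, so $v\in\mathcal A_f$. Weak lower semicontinuity then gives $\energy_{r,t}(v)\le\lambda_f^{r,t}$, so $v$ is a minimizer.

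\textbf{Step 3: Euler--Lagrange equation and regularity.} The Lagrange multiplier rule, with constraint $\int fv^N=1$, gives
\[
a_t\Delta v+\mu^{r,t}v=\lambda_f^{r,t}fv^{N-1},
\]
weakly, with $v\ge0$ and $v\not\equiv0$. Testing the equation with $v$ confirms that the multiplier equals $\lambda_f^{r,t}$. Critical-exponent regularity (Trudinger's bootstrap, as in the Yamabe problem; see \cite{Trudinger1968,LeeParker1987}) gives $v\in C^\infty$. The strong maximum principle, applied to $a_t\Delta v+(\mu^{r,t}-\lambda_f^{r,t}fv^{N-2})v=0$ with a bounded coefficient, gives $v>0$. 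Finally, set $u=(\lambda_f^{r,t})^{1/(N-2)}v$; this is legitimate since $\lambda_f^{r,t}>0$ by \eqref{eq:lambda-f-lower}. Then $u$ solves \eqref{eq:prescribed-PDE}.

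\textbf{Main obstacle.} The delicate point is Step 2. It requires the Brezis--Lieb splitting with a sign-changing weight $f$ and the correct handling of the cases $\alpha\le0$ or $\beta<0$. It also requires careful use of the sharp constant: the $B_0(g)$ term disappears only because $w_j\to0$ in $L^2$, which is what produces exactly the level $t\Lambda_n(\sup f)^{-2/N}$.
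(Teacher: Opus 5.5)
Your proof is correct, but it follows a different route from the paper's. The paper argues by subcritical approximation. For $2<q<N$ it minimizes $\energy_{r,t}$ under $\int_M f v^q=1$, obtaining smooth positive minimizers $v_q$ with multipliers $\lambda^{r,t}_{f,q}\to\lambda^{r,t}_f$. It then invokes Aubin's compactness theorem for the prescribed critical problem \cite[Chapter 6]{Aubin1998}, with \eqref{eq:weighted-condition} as the level below which concentration at a maximum point of $f$ is impossible. The final rescaling is the same as yours.

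You instead minimize directly at the critical exponent and prove the no-concentration step yourself. You use the Brezis--Lieb splitting with weight $f$, and the Hebey--Vaugon inequality \eqref{eq:sharp-compact} applied to $w_j=v_j-v$, whose $B_0(g)$ term drops out because $w_j\to0$ in $L^2$. Your case analysis in $\alpha+\beta=1$ uses $\lambda^{r,t}_f>0$ and the strict hypothesis $\lambda^{r,t}_f<t\Lambda_n(\sup_M f)^{-2/N}$, and it correctly rules out every case except $\alpha=1$.

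What your route buys:
\begin{itemize}
\item It is self-contained, using only ingredients already in the paper: Lemma~\ref{lem:coercivity}, \eqref{eq:at-K} and \eqref{eq:sharp-compact}.
\item It shows exactly where the threshold enters and how the sign-changing weight is handled, which the paper's citation leaves implicit.
\item It avoids justifying $\lambda^{r,t}_{f,q}\to\lambda^{r,t}_f$ and the compactness of $(v_q)$.
\end{itemize}
The cost is that you must run critical-exponent regularity (Trudinger or Brezis--Kato) on the limit directly. The paper's approximants are smooth and positive by subcritical theory, and the limit inherits regularity through the cited compactness theorem.

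Two small points to tidy:
\begin{itemize}
\item Write $\abs{w_j}^N$ rather than $w_j^N$ in the Brezis--Lieb identity, since $w_j$ changes sign.
\item Before applying the Lagrange multiplier rule, note that $\energy_{r,t}(\abs v)=\energy_{r,t}(v)$ and $\int_M f\abs v^N=\int_M f v^N$. Hence $\lambda^{r,t}_f$ is also the infimum over the full constraint set $\{\int_M f\abs v^N=1\}$. Your nonnegative minimizer is therefore a constrained critical point there, and not merely on the cone $v\ge0$.
\end{itemize}
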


\begin{proof}
For $2<q<N$, minimize $\energy_{r,t}$ under the subcritical constraint $\int_Mfv^q\,\mathrm dV_\omega=1$. Coercivity Lemma \ref{lem:coercivity} and the compact embedding $H^1\hookrightarrow L^q$ give a smooth positive minimizer $v_q$ satisfying
\[
L_{r,t}v_q=\lambda_{f,q}^{r,t} f v_q^{q-1}.
\]
A fixed test function supported in $\{f>0\}$ gives an upper bound for $\lambda_{f,q}^{r,t}$, hence Lemma \ref{lem:coercivity} gives a uniform $H^1$ bound. The standard comparison of the $q$- and $N$-constraints shows $\lambda_{f,q}^{r,t}\to\lambda_f^{r,t}$ as $q\uparrow N$.

After dividing the equation by $a_t$, Aubin's compactness theorem for the prescribed critical problem applies to the sequence $v_q$. Condition \eqref{eq:weighted-condition} is precisely the sharp level below which concentration at a maximum point of $f$ is impossible; see \cite[Chapter 6]{Aubin1998}. Hence a subsequence converges to a positive critical minimizer $v\in\mathcal A_f$ satisfying
\[
L_{r,t}v=\lambda_f^{r,t} f v^{N-1}.
\]
Since \eqref{eq:lambda-f-lower} gives $\lambda_f^{r,t}>0$, the rescaling
\[
u=(\lambda_f^{r,t})^{1/(N-2)}v
\]
removes the Lagrange multiplier and solves \eqref{eq:prescribed-PDE}.
\end{proof}
\begin{corollary}[Global and local prescribed-curvature tests]\label{cor:weighted-tests}
Assume \(t>0\), \(\lambda^{r,t}([\omega])>0\), and \(f\in C^\infty(M)\) with \(F:=\max_M f>0\). Either of the following conditions is sufficient for \eqref{eq:prescribed-PDE} to have a smooth positive solution.
\begin{enumerate}
\item $\int_M f\frac{\omega^n}{n!}>0$ and
\begin{equation}\label{eq:weighted-constant-test}
 \frac{\int_M\mu^{r,t}\frac{\omega^n}{n!}}{(\int_M f\frac{\omega^n}{n!})^{2/N}}
       <\frac{t\Lambda_n}{F^{2/N}};
\end{equation}
\item $n\geq3$, and at some point $P$ with $f(P)=F$,
\begin{equation}\label{eq:weighted-local}
 E_{r,t}(P)+t(n-2)\frac{\Delta f(P)}{F}<0.
\end{equation}
\end{enumerate}
\end{corollary}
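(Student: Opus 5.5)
Both conditions will be reduced to the weighted threshold of Theorem~\ref{thm:weighted-threshold}. In each case the plan is to produce a nonnegative test function $v$ with $\int_M f v^N\,\frac{\omega^n}{n!}>0$ and
\[
\frac{\energy_{r,t}(v)}{\left(\int_M f v^N\frac{\omega^n}{n!}\right)^{2/N}}<\frac{t\Lambda_n}{F^{2/N}} .
\]
Normalizing $v$ to lie in $\mathcal A_f$ then gives $\lambda_f^{r,t}<t\Lambda_n F^{-2/N}$, and the theorem supplies the smooth positive solution of \eqref{eq:prescribed-PDE}.

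For (i), take $v\equiv1$. The gradient term vanishes, so $\energy_{r,t}(1)=\int_M\mu^{r,t}\frac{\omega^n}{n!}$. The constraint integral is $\int_M f\frac{\omega^n}{n!}$, which is positive by hypothesis. The quotient is therefore exactly the left-hand side of \eqref{eq:weighted-constant-test}, and the required inequality is the assumption itself.

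For (ii), use Aubin's bubble test functions centred at $P$, as in the proof of Theorem~\ref{thm:pointwise-Aubin}. In normal (or conformal normal) coordinates at $P$, set
\[
v_\eps(x)=\varphi(x)\left(\frac{\eps}{\eps^2+|x|^2}\right)^{n-1},
\]
where $\varphi$ is a cutoff equal to $1$ near $P$. After dividing by $a_t$, the potential is $H=\mu^{r,t}/a_t$, and the expansion in real dimension $m=2n\ge6$ has the form
\[
\frac{\int(|\nabla v_\eps|^2+Hv_\eps^2)}{\left(\int f v_\eps^N\right)^{2/N}}
=\frac{K_{2n}^{-2}}{F^{2/N}}\left(1+c\,\eps^2\Bigl[\tfrac{4(m-1)}{m-2}H(P)-\scal(g)(P)+\kappa_m\tfrac{\Delta f(P)}{F}\Bigr]+o(\eps^2)\right),
\]
with constants $c>0$ and $\kappa_m$ depending only on $m$. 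The term involving $f$ enters through its Taylor expansion
\[
f(x)=F+\tfrac12\nabla^2f(P)(x,x)+O(|x|^3),
\]
integrated against $v_\eps^N$. Radial symmetry replaces the Hessian by $-\Delta f(P)/(2n)$ (recall $\Delta=-\operatorname{div}\nabla$), and the factor $\int|x|^2U^N/\int|x|^2U^{2}$-type ratios of bubble moments give the constant $\kappa_m$. This is the classical Escobar--Schoen / Aubin expansion for prescribed scalar curvature; see \cite[Chapter 6]{Aubin1998}. Multiplying through by $a_t$ and $t^{-1}(2n-1)$, as in the proof of Theorem~\ref{thm:pointwise-Aubin}, turns the bracket into a positive multiple of
\[
E_{r,t}(P)+t(n-2)\frac{\Delta f(P)}{F}.
\]
Condition \eqref{eq:weighted-local} therefore makes the quotient strictly smaller than $t\Lambda_n F^{-2/N}$ for small $\eps$.

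The main obstacle is the constant in front of $\Delta f(P)/F$. It has to be checked carefully, with the paper's sign convention for $\Delta$ and the normalization $a_t=2t/(n-1)$, that it is exactly $t(n-2)$ after rescaling. This requires the ratio of the bubble moments $\int|x|^2U^N$ and $\int|x|^2|\nabla U|^2$, combined with the dimension-dependent factor $(m-2)/m$ coming from the exponent $2/N$. The restriction $n\ge3$ (so $m\ge6$) is exactly what makes the $\eps^2$ term the leading correction, with the $|x|^2U^2$ moments finite. I would verify the constant by recomputing the expansion on the round sphere with $f$ a first spherical harmonic, where the known Kazdan--Warner balance fixes it.
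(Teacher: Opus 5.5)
Your proposal follows the paper's proof: (i) uses the constant test function rescaled into $\mathcal A_f$, and (ii) uses the weighted Aubin bubble expansion at a maximum point of $f$ in real dimension $m=2n$, rewritten via $H=\mu^{r,t}/a_t$ and $a_t=2t/(n-1)$, and then fed into Theorem~\ref{thm:weighted-threshold}. The constant you left open comes out as needed: the radial average of the Hessian gives $-\Delta f(P)\abs{x}^2/(2m)$, and combining this with $\int\abs{x}^2U_\eps^N/\int U_\eps^N=\frac{m}{m-2}\eps^2$ and the exponent $\frac{m-2}{m}$ yields a relative correction $\frac{\Delta f(P)}{2mF}\eps^2$; against the prefactor $\frac{\eps^2}{m(m-4)}$ of the $H$-bracket this gives $\kappa_m=\frac{m-4}{2}=n-2$, which is exactly the value the paper takes from Aubin's Chapter~6, so no Kazdan--Warner check is needed.
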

\begin{proof}
The first assertion uses the constant test function, scaled to satisfy the constraint. For the second assertion,
set $m=2n$ and $H=\mu^{r,t}/a_t$. The weighted Aubin expansion for
\[
\frac{\int(|\nabla v|^2+Hv^2)}{(\int f|v|^{2m/(m-2)})^{(m-2)/m}}
\]
at a positive maximum point $P$ of $f$ gives strict inequality below the weighted spherical level if
\begin{equation}\label{eq:generic-weighted-Aubin}
4\frac{m-1}{m-2}H(P)
+\frac{m-4}{2}\frac{\Delta f(P)}{f(P)}
<\scal(g)(P);
\end{equation}
see \cite[Chapter 6]{Aubin1998}. The computation is local, so $f$ need only be positive near $P$.

Since $m=2n$ and $a_t=2t/(n-1)$, the first term in \eqref{eq:generic-weighted-Aubin} is $(2n-1)\mu^{r,t}(P)/t$, while $(m-4)/2=n-2$. Multiplication by $t$ gives exactly \eqref{eq:weighted-local}. The resulting strict inequality is \eqref{eq:weighted-condition}, and Theorem \ref{thm:weighted-threshold} finishes the proof.
\end{proof}

\subsection{A sufficient interval in the connection parameter}
\begin{corollary}\label{cor:prescribed-window}
Suppose $n\geq3$, $\Tmass([\omega])>0$, and $\lambda_{1,t}>0$. Let $r_0$ be the exact positivity endpoint from Proposition \ref{prop:positivity-endpoint}. Suppose $f(P)=F=\max f>0$ and $W(P)>0$, and set
\begin{equation}\label{eq:rA}
 r_A=1+\frac{E_{1,t}(P)+t(n-2)\Delta f(P)/F}{\ty c_n W(P)}.
\end{equation}
Then $f$ is realized as $\mu^{r,t}$ by a conformal metric whenever
\begin{equation}\label{eq:window}
 1\leq r<r_0,\qquad r>r_A.
\end{equation}
This sufficient interval is nonempty exactly when $r_A<r_0$. A convenient sufficient condition for nonemptiness is
\begin{equation}\label{eq:window-mass}
 E_{1,t}(P)+t(n-2)\frac{\Delta f(P)}F
       <\frac{\ty\lambda_{1,t}W(P)}{\Tmass([\omega])}.
\end{equation}
\end{corollary}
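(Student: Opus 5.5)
The plan is to combine the weighted local criterion, Corollary~\ref{cor:weighted-tests}(ii), with the positivity endpoint of Proposition~\ref{prop:positivity-endpoint}.

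\emph{Positivity.} Since $\lambda_{1,t}>0$, Proposition~\ref{prop:positivity-endpoint} applies with base point $s=1$. It gives $\lambda_{r,t}([\omega])>0$ for every $r<r_0$, so the standing hypothesis \eqref{eq:positive-class} of Section~\ref{sec:prescribed} holds throughout the window. In fact only $r<r_0$ is needed here. The restriction $r\ge1$ appears because it is the range on which the comparison with $\lambda_{1,t}$ in the next-to-last step is used.

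\emph{Local test.} By Proposition~\ref{prop:E-reduction}, at $P$ we have
\[
E_{r,t}(P)=E_{1,t}(P)-\ty c_nW(P)(r-1).
\]
Hence the left side of \eqref{eq:weighted-local} equals
\[
E_{1,t}(P)+t(n-2)\frac{\Delta f(P)}{F}-\ty c_nW(P)(r-1).
\]
This expression is affine and strictly decreasing in $r$ because $W(P)>0$. It is negative exactly when $r>r_A$, with $r_A$ as in \eqref{eq:rA}. Since $n\ge3$ and $f(P)=F>0$, Corollary~\ref{cor:weighted-tests}(ii) then gives a smooth positive solution of \eqref{eq:prescribed-PDE} for every $r$ in \eqref{eq:window}. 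By Proposition~\ref{prop:conformal-law}, this solution is a conformal metric with $\mu^{r,t}=f$.

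\emph{Nonemptiness.} The set defined by \eqref{eq:window} is the interval $[\max\{1,r_A\}\text{-ish},r_0)$. More precisely, it consists of the $r$ with $\max(1,r_A)<r<r_0$, together with $r=1$ when $r_A<1$. So the interval is nonempty iff $r_A<r_0$, provided $r_0>1$. This holds here: \eqref{eq:r0-lower-general} with $s=1$ gives $r_0-1\ge\lambda_{1,t}/(c_n\Tmass)>0$.

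\emph{Sufficient condition \eqref{eq:window-mass}.} Multiply \eqref{eq:window-mass} by $1/(\ty c_nW(P))$. This gives
\[
r_A-1<\frac{\lambda_{1,t}}{c_n\Tmass([\omega])}\le r_0-1,
\]
again by \eqref{eq:r0-lower-general}, so $r_A<r_0$.

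The only delicate point is bookkeeping: checking that the hypotheses of Corollary~\ref{cor:weighted-tests}(ii) hold at the same $r$. That is, we need $\lambda_{r,t}>0$ (from $r<r_0$) and $P$ to be a positive maximum of $f$ (given), and the sign conventions of $E$ and $\Delta$ must be matched correctly in the affine rearrangement. I expect no genuine analytic obstacle, since all compactness is already contained in Theorem~\ref{thm:weighted-threshold}.
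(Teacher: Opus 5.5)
Your proposal is correct and takes the same route as the paper. Positivity of $\lambda_{r,t}$ for $r<r_0$ comes from Proposition~\ref{prop:positivity-endpoint} with $s=1$; \eqref{eq:E-reduction} rewrites the left side of \eqref{eq:weighted-local} as $\ty c_nW(P)(r_A-r)$; $r_0>1$ gives the nonemptiness equivalence; and \eqref{eq:r0-lower-general} gives the sufficient condition \eqref{eq:window-mass}.
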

\begin{proof}
Suppose \(n\geq3\), \(t>0\), \(\mathcal T([\omega])>0\), and
\(\lambda_{1,t}>0\). Define
\[
r_0=1+\frac1{c_n}
\inf_{\mathcal A(u)>0}\frac{Q^{1,t}(u)}{\mathcal A(u)}.
\]
Let \(f\in C^\infty(M)\), and suppose \(f(P)=F=\max_M f>0\) and \(W(P)>0\). For $r<r_0$, the quadratic form is coercive. By \eqref{eq:E-reduction}, the left side of \eqref{eq:weighted-local} equals $\ty c_nW(P)(r_A-r)$. Thus Corollary~\ref{cor:weighted-tests} applies under \eqref{eq:window}. Since $r_0>1$, nonemptiness is equivalent to $r_A<r_0$. Finally, \eqref{eq:window-mass} gives
\[
 r_A<1+\frac{\lambda_{1,t}}{c_n\Tmass([\omega])}\leq r_0.
\]
\end{proof}

Within the positive range, $r\mapsto\lambda^{r,t}_f$ is nonincreasing, by \eqref{eq:Q-affine} and the unchanged constraint. Hence a strict weighted inequality, once obtained, persists at larger $r$ until the positivity endpoint. These statements do not imply that every prescribed function is unsolvable beyond the sufficient interval.

\subsection{Prescribed curvature on the Iwasawa threefold}
On the three-dimensional Iwasawa manifold, write
\[
 c=2(t-4r-2),\qquad q(z)=\cos(2\pi\operatorname{Re}z_1).
\]
Then $a_t=t$, $\mu^{r,t}=c$, and $\Delta q=4\pi^2q$. For $\abs\varepsilon<1$, the metric $\widehat\omega=(1+\varepsilon q)\omega_X$ has curvature
\begin{equation}\label{eq:iwasawa-prescribed}
 \mu^{r,t}(\widehat\omega)
 =\frac{c+\varepsilon(c+4\pi^2t)q}{(1+\varepsilon q)^2}.
\end{equation}
If $c>0$ and $0<\abs\varepsilon<c/(c+4\pi^2t)$, this is positive and nonconstant. The numerator is positive under the stated bound, and a nonzero quadratic denominator cannot be proportional to the linear numerator.

This example also shows why the sufficient local interval need not exist on every geometry. Here $\lambda_{r,t}>0$ exactly when $r<(t-2)/4$, while $E_{r,t}=4(3t-10r-5)$ is negative only when $r>(3t-5)/10$. The latter lower bound exceeds $(t-2)/4$ by $t/20$. At a positive maximum of $f$, the additional term $t\Delta f/f$ is nonnegative. Thus the local test gives no coercive interval on this background, although \eqref{eq:iwasawa-prescribed} produces many positive prescribed curvatures directly.

\bibliographystyle{amsplain}
\bibliography{deformed_gauduchon_yamabe_problem}
\end{document}